\newcommand\reallywidehat[1]{\arraycolsep=0pt\relax%
\begin{array}{c}
\stretchto{
  \scaleto{
    \scalerel*[\widthof{\ensuremath{#1}}]{\kern-.5pt\bigwedge\kern-.5pt}
    {\rule[-\textheight/2]{1ex}{\textheight}} 
  }{\textheight} %
}{0.5ex}\\           
#1\\                 
\rule{-1ex}{0ex}
\end{array}
}
\theoremstyle{plain} 
\newtheorem{thm}{Theorem}[section]
\newtheorem{prop}[thm]{Proposition}
\newtheorem{lem}[thm]{Lemma}
\theoremstyle{definition}
\newtheorem{defn}[thm]{Definition}
\newtheorem{rem}[thm]{Remark}
\newtheorem{ex}[thm]{Example}
\numberwithin{equation}{section}
\renewcommand{\theta}{\vartheta}
\renewcommand{\phi}{\varphi}
\renewcommand{\epsilon}{\varepsilon}
\renewcommand{\subset}{\subseteq}
\newcommand{\N}{\mathbb N}
\newcommand{\Z}{\mathbb Z}
\DeclareMathOperator{\Aut}{Aut}
\newcommand{\QBan}{G_{aut}^+}
\newcommand{\QBic}{G_{aut}^*}
\begin{document}
\title{On the quantum symmetry of distance-transitive graphs}
\author{Simon Schmidt}
\address{Saarland University, Fachbereich Mathematik, 
66041 Saarbr\"ucken, Germany}
\email{simon.schmidt@math.uni-sb.de}
\thanks{The author is supported by the DFG project \emph{Quantenautomorphismen von Graphen}. He thanks his supervisor Moritz Weber for proofreading the article and many helpful comments and suggestions. This article is part of the author's PhD thesis.}
\date{\today}
\subjclass[2010]{46LXX (Primary); 20B25, 05CXX (Secondary)}
\keywords{finite graphs, graph automorphisms, automorphism groups, quantum automorphisms, quantum groups, quantum symmetries}

\begin{abstract}
In this article, we study quantum automorphism groups of distance-transitive graphs. We show that the odd graphs, the Hamming graphs $H(n,3)$, the Johnson graphs $J(n,2)$ and the Kneser graphs $K(n,2)$ do not have quantum symmetry. We also give a table with the quantum automorphism groups of all cubic distance-transitive graphs. Furthermore, with one graph missing, we can now decide whether or not a distance-regular graph of order $\leq 20$ has quantum symmetry. Moreover, we prove that the Hoffman-Singleton graph has no quantum symmetry. On a final note, we present an example of a pair of graphs with the same intersection array (the Shrikhande graph and the $4 \times 4$ rook's graph), where one of them has quantum symmetry and the other one does not. 
\end{abstract}

\maketitle

\section*{Introduction}

The symmetries of a finite graph are captured by its automorphisms. A graph automorphism is a bijection $\sigma:V \to V$ on the vertices, where vertices $i$ and $j$ are adjacent if and only if $\sigma(i)$ and $\sigma(j)$ are. Via composition, we get a group structure on the set of automorphisms and obtain the automorphism group of the graph. Banica and Bichon generalized the concept of automorphism groups of finite graphs to the so called quantum automorphism groups of finite graphs (\cite{QBan, QBic}) in the framework of Woronowicz's compact matrix quantum groups (\cite{CMQG2}). Here we say that a graph has no quantum symmetry if the quantum automorphism group coincides with its classical automorphism group. In \cite{BanBic}, Banica and Bichon computed the quantum automorphism groups of vertex transitive graphs up to order eleven, except the Petersen graph. There are also results for the quantum automorphism groups of circulant graphs, see \cite{ArthurQSymm, Che}. 

In recent work \cite{QAutPetersen} the author showed that the Petersen graph has no quantum symmetry. Here it was crucial to use the fact that the Petersen graph is strongly regular. Strongly regular graphs are special cases of distance-regular graphs, namely those with diameter two. 
A distance-transitive graph is a graph such that for any given pair of vertices $i,j$ in distance $a$ and any other pair of vertices $k,l$ with $d(k,l)=a$ there is a graph automorphism $\sigma:V \to V$ with $\sigma(i)=k$ and $\sigma(j)=l$. Since distance-transitive graphs are also distance-regular, we can try to apply similar techniques as for the Petersen graph. In the present work, we are not only using the fact that distance-transitive graphs are distance-regular -- the distance-transitivity plays an important role, too. 

Quantum automorphism groups are in close relation to quantum isomorphisms appearing in quantum information theory. This was discovered for example in \cite{nonlocal} and \cite{MustoReutterVerdon}. By results in \cite[Corollary 3.7, Corollary 4.15]{MustoReutterVerdon}, one can classify the quantum graphs and the classical graphs that are quantum isomorphic to a given graph, if this graph has no quantum symmetry. Since we obtain many new examples of graphs without quantum symmetry in this article, this could be used to better understand the quantum graphs and the classical graphs quantum isomorphic to those graphs. 

\section{Main results}
In this section, we summarize the results we obtain in this article. The tools for proving that a graph has no quantum symmetry can be found in Section \ref{tools} whereas the explicit examples are tackled in the Sections \ref{families}, \ref{cubicdist} and \ref{further}. 

Amongst the tools we want to highlight the following two.

\begin{thm} Let $\Gamma$ be a finite, undirected graph, let $(u_{ij})_{1 \leq i,j \leq n}$ be the generators of $C(\QBan(\Gamma))$ and let $d(s,t)$ be the distance of $s, t \in V$.
\begin{itemize}
\item[(i)] If we have $d(i,k) \neq d(j,l)$, then $u_{ij}u_{kl} = 0$.
\item[(ii)] Let $\Gamma$ be distance-transitive. Let $j_1, l_1 \in V$ and put $m:=d(j_1,l_1)$. If $u_{aj_1}u_{bl_1} =u_{bl_1}u_{aj_1}$ for all $a,b$ with $d(a,b)=m$, then we have $u_{ij}u_{kl} =u_{kl}u_{ij}$ for all $i,k, j,l$ with $d(j,l)=m =d(i,k)$.
\end{itemize}
\end{thm}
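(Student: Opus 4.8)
The plan is to handle the two parts by quite different means: (i) is a formal consequence of the commutation relation, whereas distance-transitivity only enters in (ii). For (i) I would first recall that in a distance-regular graph each distance matrix $A_r$ is a polynomial in the adjacency matrix $A$, so the fundamental magic unitary $u=(u_{ij})$ of $C(\QBan(\Gamma))$, which commutes with $A$, commutes with every $A_r$ as well. Taking $r:=d(i,k)$ and reading off the $(i,l)$-entry of $A_r u = u A_r$ gives
\[ \sum_{s:\,d(i,s)=r} u_{sl} \;=\; \sum_{s:\,d(s,l)=r} u_{is}. \]
Now I would bring in the two families of orthogonality relations valid in any magic unitary, $u_{ij}u_{is}=\delta_{js}u_{ij}$ along a row and $u_{sl}u_{kl}=\delta_{sk}u_{kl}$ along a column. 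Multiplying the displayed identity on the left by $u_{ij}$ collapses the right-hand sum (by row-orthogonality) to $u_{ij}$ if $d(j,l)=r$ and to $0$ otherwise; multiplying the resulting equation on the right by $u_{kl}$ collapses the left-hand sum (by column-orthogonality, the term $s=k$ surviving because $d(i,k)=r$) to $u_{ij}u_{kl}$. Comparing the two sides forces $u_{ij}u_{kl}=0$ whenever $d(i,k)\neq d(j,l)$, which is the claim.

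For (ii) the idea is to \emph{transport} the hypothesised commutation along a classical automorphism. Given $\tau\in\Aut(\Gamma)$ with permutation matrix $P_\tau$, I would note that $P_\tau A = A P_\tau$, so $uP_\tau$ is again a magic unitary commuting with $A$; by the universal property of $C(\QBan(\Gamma))$ the assignment $u_{ij}\mapsto (uP_\tau)_{ij}$ extends to a unital $*$-homomorphism $\Phi_\tau$ of $C(\QBan(\Gamma))$ that permutes only the \emph{column} index, according to $\tau$. Since $d(j_1,l_1)=m=d(j,l)$, distance-transitivity supplies such a $\tau$ carrying $(j_1,l_1)$ to $(j,l)$ (replacing $\tau$ by $\tau^{-1}$ if the index convention demands it). Applying $\Phi_\tau$ to the hypothesis $u_{aj_1}u_{bl_1}=u_{bl_1}u_{aj_1}$ and using that a $*$-homomorphism preserves commutation, I obtain $u_{aj}u_{bl}=u_{bl}u_{aj}$ for all $a,b$ with $d(a,b)=m$; relabelling $a=i$, $b=k$ is precisely the assertion. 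The point that makes this work cleanly is that $\Phi_\tau$ leaves the row indices $a,b$ untouched, so the index set $\{(a,b):d(a,b)=m\}$ appearing in the hypothesis is literally the same as in the conclusion, and no automorphism acting on rows is needed.

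I expect the main obstacle to be the conceptual step in (ii): seeing that a classical graph automorphism induces an algebra homomorphism of $C(\QBan(\Gamma))$ by right multiplication of $u$ with $P_\tau$, which is exactly the mechanism by which one reference relation at distance $m$ propagates to all pairs at that distance. Once this is in place, the rest is bookkeeping — pinning down the index convention for $\Phi_\tau$ and confirming that distance-transitivity yields precisely the column permutation needed while the rows remain free.
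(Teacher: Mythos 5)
Your part (ii) is correct and is essentially the paper's own argument: the paper transports the reference relation via the $*$-isomorphism $u_{ij}\mapsto u_{\phi(i)\phi(j)}$ induced by conjugating $u$ with the permutation matrix of $\phi\in\Aut(\Gamma)$, whereas you use the one-sided version $u\mapsto uP_\tau$, which only permutes columns. Your variant is even slightly cleaner, since the row indices in the hypothesis are then literally the ones needed in the conclusion and no relabelling $d(\phi^{-1}(i),\phi^{-1}(k))=m$ is required; both maps are well defined by the universal property because $uP_\tau$ (respectively $P_\phi uP_\phi^{-1}$) is again a magic unitary commuting with the adjacency matrix.

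Part (i), however, has a genuine gap. The statement is for an arbitrary finite undirected graph, but your argument rests on the claim that $u$ commutes with every distance matrix $A_r$, which you justify by ``$A_r$ is a polynomial in $A$''. That justification is only valid for distance-regular graphs (it is precisely the Bose--Mesner algebra property); for a general graph $A_r$ need not lie in the algebra generated by $A$, and the commutation $uA_r=A_ru$ is essentially equivalent to the assertion you are trying to prove, so it cannot be assumed. The paper avoids this by a direct path argument: assuming $m:=d(i,k)<d(j,l)$ and fixing a geodesic $i,a_1,\dots,a_{m-1},k$, one inserts the resolutions $\sum_{b_x}u_{a_xb_x}=1$ to write $u_{ij}u_{kl}$ as a sum of words $u_{ij}u_{a_1b_1}\cdots u_{a_{m-1}b_{m-1}}u_{kl}$; since there is no path of length $m$ from $j$ to $l$, every summand contains a factor $u_{a_xb_x}u_{a_{x+1}b_{x+1}}$ with $(a_x,a_{x+1})\in E$ but $(b_x,b_{x+1})\notin E$, which vanishes by Relation \eqref{QA3}. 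Your row/column orthogonality manipulation after the commutation identity is fine, and your argument does prove (i) for the distance-regular graphs to which the paper applies it; but to prove the lemma as stated you would either need the paper's path argument or an independent proof that the commutant of a magic unitary is closed under Schur polynomials (so that it contains all $A_r$ for an arbitrary graph), neither of which you supply.
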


We show that the odd graphs $O_k$ have no quantum symmetry in Subsection \ref{odd}. Thus, we have an example of an infinite family of distance-transitive graphs having no quantum symmetry, besides the cycles. 

\begin{thm}\label{thm1.1}
The odd graphs $O_k$ have no quantum symmetry.
\end{thm}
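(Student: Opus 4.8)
The aim is to prove that the universal C*-algebra $C(\QBan(O_k))$ is commutative. Indeed, as soon as all the generators $u_{ij}$ commute, $C(\QBan(O_k))$ is a commutative C*-algebra generated by a magic unitary, hence equals $C(\Aut(O_k))$, which is precisely the statement that $O_k$ has no quantum symmetry. Throughout I would use the standard model of $O_k$ as the graph on the $(k-1)$-subsets of $\{1,\dots,2k-1\}$ with disjoint subsets adjacent; it is distance-transitive of diameter $k-1$, and the relevant intersection numbers are $c_i=\lceil i/2\rceil$ together with $a_i=0$ for $1\leq i\leq k-2$, so that below the diameter every neighbour of a vertex $b$ with $d(a,b)=m$ lies at distance $m-1$ or $m+1$ from $a$. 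Since each $u_{ij}$ is a projection, $(u_{ij}u_{kl})^{*}=u_{kl}u_{ij}$, so proving the commutation $u_{ij}u_{kl}=u_{kl}u_{ij}$ is equivalent to proving that the product $u_{ij}u_{kl}$ is self-adjoint; this is the reformulation I would exploit.

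First I would reduce the problem using the two tools stated above. By part (i) a product $u_{ij}u_{kl}$ can be nonzero only if $d(i,k)=d(j,l)$, so it is enough to prove commutation separately inside each distance class $m\in\{0,1,\dots,k-1\}$, the case $m=0$ being trivial. By part (ii), for each such $m$ it suffices to fix one representative pair $j_1,l_1$ with $d(j_1,l_1)=m$ and to verify $u_{aj_1}u_{bl_1}=u_{bl_1}u_{aj_1}$ for all $a,b$ with $d(a,b)=m$. This turns the theorem into $k-1$ combinatorial verifications, one for each nonzero distance.

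I would carry out these verifications by induction on the distance $m$, the base case $m=1$ being adjacency. Here I would combine triangle-freeness of $O_k$ (adjacent vertices have no common neighbour) with the relation $\sum_{c\,:\,d(a,c)=1}u_{cj_1}=\sum_{e\,:\,d(j_1,e)=1}u_{ae}$ coming from $uA=Au$: expanding $u_{aj_1}u_{bl_1}$ against the resolution of the identity $\sum_c u_{cj_1}=1$ and deleting every term annihilated by part (i), the surviving terms are controlled by common neighbours, and the values $c_1=1$, $a_1=0$ should collapse the expression into a manifestly self-adjoint one. For the inductive step at a distance $m$ with $1<m<k-1$, I would use $a_m=0$: every neighbour of $b$ sits at distance $m-1$ or $m+1$ from $a$, with multiplicities $c_m$ and $b_m$, so inserting $\sum_c u_{cj_1}=1$ and applying part (i) rewrites the distance-$m$ product in terms of products at distances $m-1$ and $m+1$, the former already handled and the latter fed back through the same relation, until commutation at the smaller, already established distances forces self-adjointness at distance $m$.

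The main obstacle I anticipate is exactly this inductive bookkeeping. One must count, for vertices $a,b$ with $d(a,b)=m$ in $O_k$, how many vertices realise each prescribed pair of distances to $a$ and to $b$, and check that the specific intersection numbers of the odd graph make all cross-terms either vanish by part (i) or combine into self-adjoint pairs; one must also verify that the recursion genuinely decreases the distance rather than merely cycling among the distance layers. The top distance $m=k-1$ is the most delicate point, since there $a_{k-1}\neq 0$ and the clean ``locally bipartite'' splitting of neighbours fails, so this case would require a separate and more careful argument. The distance-transitivity, entering through part (ii), is what keeps all this counting uniform across each distance class and thereby makes the induction tractable.
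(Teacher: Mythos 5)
There is a genuine gap, and it sits exactly where you did not expect it. Your reduction to distance classes via parts (i) and (ii) matches the paper, and your treatment of $m=1$ (which in the paper is the girth-$\geq 5$ argument of Theorem \ref{g5}, using $a_1=0$ together with $c_2=1$ to get a unique common neighbour of two vertices at distance two) and of $3\leq m\leq k-1$ (which the paper dispatches by iterating Lemma \ref{ia}(a), using $c_m\geq 2$, $c_2=1$ and $b_1+1=b_0$) can be made to work. But your inductive step breaks down at $m=2$, precisely because $c_2=1$ in your own formula $c_i=\lceil i/2\rceil$. The mechanism you describe --- insert $\sum_c u_{cj_1}=1$, filter by part (i), and let the unique common neighbour $t$ of $j_1$ and $l_1$ isolate the term $u_{ij_1}$ --- only yields
\begin{align*}
u_{ij_1}u_{kl_1}=u_{ij_1}u_{kl_1}\sum_{\substack{p;\,d(l_1,p)=2,\\ (t,p)\in E}}u_{ip},
\end{align*}
and since $b_1=k-1$ and $a_1=0$, that sum has $k-1$ terms, of which only one is $u_{ij_1}$. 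When $c_m\geq 2$ one can intersect the index sets coming from two distinct common neighbours and shrink the sum to the single term $u_{ij_1}$; with $c_2=1$ there is only one such neighbour and nothing forces the other $k-2$ terms to vanish. Killing them is the real content of the paper's proof: working in the subset model with $j=\{1,\dots,k-1\}$, $q=\{1,\dots,k-2,k\}$, it introduces the auxiliary vertex $t=\{1,\dots,k-2,k+1\}$, applies Lemma \ref{krit} to get a relation $u_{ij}(u_{pq}+u_{pr_s})u_{id_s}=0$ tying each unwanted term to a companion, and then eliminates the companion by a separate adjacency argument. None of this is forced by the intersection numbers alone, and your sketch contains no substitute for it.

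Relatedly, you have misplaced the delicate case. You flag $m=k-1$ as problematic because $a_{k-1}\neq 0$, but the argument of Lemma \ref{ia}(a) at the top distance does not need $a_m=0$; it only needs $c_m\geq 2$ (true for all $m\geq 3$ in $O_k$) plus $c_2=1$ and triangle-freeness to guarantee that the two chosen neighbours of $j_1$ at distance $m-1$ from $l_1$ have $j_1$ as their unique common neighbour. So the top of the induction is routine, and the whole difficulty of the theorem is concentrated at $m=2$, which your proposal leaves unproved.
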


The next main result addresses Hamming graphs, see Subsection \ref{Hamming}. We know precisely for which values the Hamming graphs have no quantum symmetry. 

\begin{thm}\label{thm1.2}
The Hamming graphs $H(n,3)$, $n \in \N$, $H(1,2)$ and $H(m,1)$, $m=1,2,3$, have no quantum symmetry. For all other values, the graph $H(n,k)$ has quantum symmetry.
\end{thm}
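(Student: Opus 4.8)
The statement splits into three regimes, which I would treat separately. The cases $H(m,1)$ (a single vertex) and $H(1,2)=K_2$ are immediate, since $S_1^+=S_1$ and $S_2^+=S_2$ are classical, so $C(\QBan)$ is commutative. For the graphs that \emph{do} have quantum symmetry, namely $H(n,k)$ with $k\geq 4$ and arbitrary $n$, and $H(n,2)$ with $n\geq 2$, I would exploit the Cartesian product decomposition $H(n,k)=K_k\,\square\,H(n-1,k)$ together with the principle that a quantum automorphism of a factor extends to the product. Concretely, if $v$ is the fundamental magic unitary of $\QBan(\Delta)$ for a factor $\Delta$, then the entries $v_{xy}\delta_{zw}$ define a magic unitary on the product commuting with $A_\Delta\otimes I+I\otimes A_{\Delta'}$, hence a $*$-homomorphism $C(\QBan(H(n,k)))\surj C(\QBan(\Delta))$; it is surjective because the $v_{xy}$ lie in its image. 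Taking $\Delta=K_k$, which is noncommutative for $k\geq 4$ since $S_k^+\neq S_k$, settles $k\geq 4$, while taking $\Delta=H(2,2)=C_4$, which is known to have quantum symmetry, settles $k=2,\ n\geq 2$. In each case the target is noncommutative, so the source is too.

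The heart of the theorem is to show that $H(n,3)$ has no quantum symmetry, i.e.\ that all generators $u_{ij}$ of $C(\QBan(H(n,3)))$ commute. By part (i) of the commutation theorem above, $u_{ij}u_{kl}=0$ whenever $d(i,k)\neq d(j,l)$, so it suffices to prove $u_{ij}u_{kl}=u_{kl}u_{ij}$ when $d(i,k)=d(j,l)=m$ for each $m\in\{0,\dots,n\}$. Since $H(n,3)$ is distance-transitive, part (ii) reduces this, for each fixed $m$, to verifying $u_{aj_1}u_{bl_1}=u_{bl_1}u_{aj_1}$ for one chosen pair $j_1,l_1$ with $d(j_1,l_1)=m$ and all $a,b$ with $d(a,b)=m$. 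I would then induct on $m$, the case $m=0$ being trivial.

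The decisive structural feature of $H(n,3)$ is that two adjacent vertices have a \emph{unique} common neighbour (the intersection number $a_1=k-2=1$): every edge lies in a unique triangle, obtained by varying the single differing coordinate through all three letters. Fixing adjacent $j_1,l_1$ with triangle $\{j_1,l_1,s\}$ and any adjacent $a,b$ with triangle $\{a,b,t\}$, I would insert the resolution of identity $\sum_c u_{tc}=1$ between $u_{aj_1}$ and $u_{bl_1}$; part (i) forces every term to vanish except $c=s$, since the image of $t$ must be adjacent to both $j_1$ and $l_1$ and hence equals their unique common neighbour $s$. This yields $u_{aj_1}u_{bl_1}=u_{aj_1}u_{ts}u_{bl_1}$ and, by the same device applied to the other two pairs of the triangles, a whole family of relations among the three projections attached to the two triangles. \emph{The main obstacle is to extract genuine commutation from these relations:} pairwise they only say that each edge-triangle maps into the unique target triangle, and a single pair of triangles does not form a closed magic unitary, so the distance-$1$ commutations remain coupled across all edges. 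I expect to close this by combining the triangle relations with the distance-transitivity reduction of part (ii) and summing over the neighbourhood structure, which is the technically hardest point.

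Finally, for $m\geq 2$ I would propagate commutation from smaller distances. Choosing $j_1,l_1$ on a common geodesic and a neighbour $t$ of $b$ with $d(a,t)=m-1$, inserting $\sum_c u_{tc}=1$ between $u_{aj_1}$ and $u_{bl_1}$ restricts $c$, by part (i), to the $c_m=m$ neighbours of $l_1$ at distance $m-1$ from $j_1$. In each resulting triple product $u_{aj_1}u_{tc}u_{bl_1}$ the first pair of factors sits at distance $m-1$ and the second at distance $1$, so both commute by the induction hypothesis and the base case, allowing the outer factors to be reordered. Together with part (ii) this gives commutation at every distance, hence the commutativity of $C(\QBan(H(n,3)))$ and the absence of quantum symmetry.
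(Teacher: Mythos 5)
Your treatment of the degenerate cases and of the graphs that \emph{do} have quantum symmetry is sound and matches the paper, which likewise derives Proposition \ref{Hamm} from the Cartesian product decomposition and a surjection onto the quantum automorphism group of a factor. The problems are in the core claim that $C(\QBan(H(n,3)))$ is commutative, where both halves of your argument stop short of a proof. For distance one you correctly isolate the decisive feature (every edge lies in a unique triangle) and derive $u_{aj_1}u_{bl_1}=u_{aj_1}u_{ts}u_{bl_1}$, but you then concede that you do not know how to ``extract genuine commutation'' from these relations --- and that concession is the whole theorem at this distance. The paper closes it in Lemma \ref{oneneighbor}: writing $u_{ij}u_{kl}=u_{ij}u_{kl}\sum_{p:(l,p)\in E}u_{ip}$, it kills the term $p=p_1$ (the third triangle vertex) directly, and for every other neighbour $p$ of $l$ it inserts $u_{sp_1}$, where $s$ is the unique common neighbour of $i$ and $k$; since $j$ is the unique common neighbour of $p_1$ and $l$, one has $(p_1,p)\notin E$, whence $u_{sp_1}u_{kl}u_{ip}=u_{sp_1}u_{ip}=0$. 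The sum then collapses to $u_{ij}u_{kl}=u_{ij}u_{kl}u_{ij}$, and self-adjointness of the right-hand side (Lemma \ref{com}) gives commutation. Nothing in your triangle relations substitutes for this step.

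The induction step for $m\geq 2$ is also incorrect as stated. Inserting $\sum_c u_{tc}$ and commuting $u_{tc}$ past either neighbour only lets you move the \emph{middle} factor to the left or to the right; it never interchanges the two outer factors $u_{aj_1}$ and $u_{bl_1}$, and doing so is precisely the statement you are trying to prove. What you actually obtain is $u_{aj_1}u_{bl_1}=u_{aj_1}u_{bl_1}\sum_{c\in S}u_{tc}$ for a restricted index set $S$, which is strictly weaker than commutation. The paper's mechanism (Lemma \ref{abk} applied with the explicit vertices $p_1=(2,1,\dots,1)$ and $p_2=(1,2,1,\dots,1)$) is to run this restriction \emph{twice}, once for each of the two common neighbours of $j=(1,\dots,1)$ and $l=t^{(m)}$, and to intersect the two index sets: the only vertex adjacent to both $p_1$ and $p_2$ and at distance $m$ from $l$ is $j$ itself, so the sum collapses to the single term $u_{ij}u_{kl}u_{ij}$ and Lemma \ref{com} applies again. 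Some device of this kind --- producing $u_{ij}u_{kl}u_{ij}$ and invoking self-adjointness --- is needed at every distance; a single insertion plus reordering will not do it.
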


We also study the Johnson graphs $J(n,2)$ and their complements. In contrary to the Hamming graphs, it is not clear what happens for the Johnson graphs $J(n,k)$ with $k >2$. 

\begin{thm}\label{thm1.3}
For $n\geq 5$, the Johnson graphs $J(n,2)$ and the Kneser graphs $K(n,2)$ do not have quantum symmetry. 
\end{thm}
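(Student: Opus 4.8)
The plan is first to reduce the Kneser case to the Johnson case. For $k=2$, two distinct $2$-subsets of $[n]$ meet in either exactly one point or in none, so the Kneser graph $K(n,2)$ is precisely the complement of the Johnson graph $J(n,2)$ (the triangular graph $T(n)$). Since every magic unitary commutes with the identity matrix and with the all-ones matrix, it commutes with the adjacency matrix $A$ of $J(n,2)$ if and only if it commutes with $J-I-A$, the adjacency matrix of $K(n,2)$. Hence $\QBan(J(n,2)) = \QBan(K(n,2))$, and it suffices to prove that $J(n,2)$ has no quantum symmetry for $n \geq 5$.

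Next I would record the metric structure. For $n \geq 5$ the graph $J(n,2)$ is distance-transitive of diameter two, with $d(P,Q) = 0,1,2$ according to whether $|P \cap Q| = 2,1,0$, and it is strongly regular with $\lambda = n-2$ and $\mu = 4$ (two disjoint pairs have exactly the four common neighbours obtained by crossing their points). Showing no quantum symmetry amounts to showing that the generators $u_{PQ}$ of $C(\QBan(J(n,2)))$ all commute, so that this C*-algebra is commutative and the quantum automorphism group collapses to $\Aut(J(n,2)) = S_n$. By part (ii) of the highlighted theorem it is then enough, for each $m \in \{0,1,2\}$, to fix one representative pair $(j_1,l_1)$ with $d(j_1,l_1)=m$ and verify $u_{a j_1} u_{b l_1} = u_{b l_1} u_{a j_1}$ for all $a,b$ with $d(a,b)=m$. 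The case $m=0$ is immediate since the $u_{PQ}$ are projections.

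For $m=1$ and $m=2$ I would use part (i), which annihilates every product $u_{a j_1} u_{b l_1}$ with $d(a,b) \neq m$, together with the intertwining relation $uA = Au$, written out as $\sum_{R \sim P} u_{RQ} = \sum_{R \sim Q} u_{PR}$, and the row-sum relation $\sum_{Q} u_{PQ} = 1$. The strategy is to expand the relevant products against these resolutions of the identity, cut down by the distance constraints, and to use the common-neighbour count ($\mu = 4$, $\lambda = n-2$) to pin down the surviving terms. The clique structure is the key organising principle here: each pair $P = \{a,b\}$ lies in exactly the two maximal cliques $C_a$ and $C_b$ of all pairs containing $a$, respectively $b$, and for $n \geq 5$ these point-cliques (of size $n-1 > 3$) are exactly the maximal cliques of maximum size, so they are recognizable from the graph and the generators must respect the decomposition of each vertex into its two point-cliques.

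The hard part will be the distance-two relation. Unlike the Petersen graph (which is $K(5,2)$), where $\mu = 1$ makes the common neighbour of two non-adjacent vertices unique and yields a clean uniqueness argument, here $\mu = 4$ and no such uniqueness is available, so a more careful combinatorial argument is required. I expect to extract the needed rigidity from the two-clique description: combining the constraint that the generators carry the two point-cliques through $P$ to the two point-cliques through its image with the $uA=Au$ relations should localize the generators enough to force $u_{a j_1} u_{b l_1}$ and $u_{b l_1} u_{a j_1}$ to coincide, essentially by reducing the question to a commutation statement about the induced action on the $n$ points. Once both nontrivial distances are settled, part (ii) upgrades the representative commutations to all configurations, $C(\QBan(J(n,2)))$ is commutative, and both $J(n,2)$ and $K(n,2)$ have no quantum symmetry for $n \geq 5$.
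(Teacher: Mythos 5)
Your reduction of the Kneser case to the Johnson case is correct and matches the paper (a magic unitary commutes with $A$ iff it commutes with $J-I-A$), and your use of distance-transitivity to reduce to a single representative pair $(j_1,l_1)$ for each distance is exactly Lemma \ref{disttrans}. But from that point on the proposal is a plan rather than a proof: for both $m=1$ and $m=2$ you describe the tools you would use (expanding against resolutions of the identity, the counts $\lambda=n-2$, $\mu=4$, the two point-cliques through each vertex) and then state that you ``expect'' this to force the commutation. The entire content of the theorem lives in precisely the step you leave unexecuted. In particular, the claim that ``the generators must respect the decomposition of each vertex into its two point-cliques'' is itself a nontrivial assertion about the quantum permutation; it is true classically (Whitney), but at the $C^*$-algebra level it is not something you can invoke without proof, and proving any usable version of it would amount to redoing the hard computation you are trying to avoid.

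For comparison, the paper's argument for the adjacent case (which, contrary to your expectation, is where the real work is) hinges on a specific counting identity: with $j=\{1,2\}$, $l=\{1,3\}$ and $p=\{1,a\}$ a neighbour of $l$, applying Lemma \ref{krit} to the various common-neighbour configurations produces the relations \eqref{1} and \eqref{2}, which combine to give $(n-4)\,u_{ij}u_{kl}u_{ip}=0$, so the product vanishes exactly when $n\geq 5$ (this is where the hypothesis enters); a separate multiplication trick handles the neighbours of the form $\{3,b\}$. The distance-two case is then comparatively easy: once the adjacent commutations are known, Lemma \ref{abk} applied to the two common neighbours $\{1,3\}$ and $\{2,4\}$ of $\{1,2\}$ and $\{3,4\}$ reduces the sum to the single term $u_{ij}$, since $\{1,2\}$ is the only $2$-subset containing both $1$ and $2$. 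Nothing in your proposal substitutes for these computations, so as written there is a genuine gap.
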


We also present a table with the quantum automorphism groups of all cubic distance-transitive graphs. There are twelve such graphs as shown by Biggs and Smith in \cite{BiggsSmith}. For three of them, namely the complete graph on four points $K_4$, the complete bipartite graph on six points $K_{3,3}$ as well as the cube $Q_3$, it is known that they have quantum symmetry and their quantum automorphism groups are given in \cite{BanBic}. We show that the remaining ones have no quantum symmetry in Section \ref{cubicdist}, the example of the Petersen graph being known from \cite{QAutPetersen}. We also give the intersection arrays (see Definition \ref{interarray}) of the graphs in the table. This can be used for example to look up if Lemma \ref{ia} applies. 

\begin{thm}\label{mainthm}
Let $\Gamma$ be a cubic distance-transitive graph of order $\geq 10$. Then $\Gamma$ has no quantum symmetry.
\end{thm}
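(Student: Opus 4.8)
The plan is to prove the statement for each of the remaining cubic distance-transitive graphs of order $\geq 10$ individually, using the list of twelve such graphs from Biggs and Smith. After removing $K_4$, $K_{3,3}$ and $Q_3$ (which are the three of order $< 10$ or otherwise already treated and known to have quantum symmetry), what remains is a finite list: the Petersen graph, the Heawood graph, the Pappus graph, the Desargues graph, the dodecahedron, the Coxeter graph, the Tutte--Coxeter (Levi) graph, the Foster graph and the Biggs--Smith graph. The Petersen graph is handled in \cite{QAutPetersen}, so I would dispatch the rest by the distance-transitive machinery of the highlighted Theorem together with whatever commutation criterion Section \ref{tools} provides (the hinted Lemma \ref{ia} using intersection arrays).

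The central idea is to show that the generators $u_{ij}$ of $C(\QBan(\Gamma))$ all commute, which forces $C(\QBan(\Gamma))$ to be commutative and hence equal to the function algebra on the classical automorphism group, i.e. no quantum symmetry. First I would record, for each graph, that part~(i) of the highlighted Theorem already gives $u_{ij}u_{kl}=0$ whenever $d(i,k)\neq d(j,l)$, so only pairs at equal distance can fail to commute. Then, by part~(ii), to obtain commutation of \emph{all} $u_{ij}u_{kl}$ with $d(i,k)=d(j,l)=m$ it suffices to establish commutation for a single reference pair at each distance $m$; this reduces the problem to a finite, distance-by-distance check. For the smallest distances ($m=1$, and typically $m=2$) I expect the intersection array to force commutation directly via Lemma \ref{ia}: the key is that in a cubic graph the relevant intersection numbers are so small that the combinatorial relations among the $u_{ij}$ (orthogonality of rows and columns, the magic unitary relations, and the adjacency relation $uA=Au$) leave no room for noncommuting generators.

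The engine for the larger distances is that commutation propagates along the graph. Concretely, once $u_{aj}$ and $u_{bl}$ are known to commute for all pairs at distance $m$, I would use the intersection-array structure to relate a pair at distance $m+1$ to configurations built from pairs at distances $m$ and $1$, pushing the commutation outward until the diameter is reached. Because each of these graphs has small diameter (between $2$ and $7$) and bounded intersection numbers, this is a terminating induction that I would carry out graph-by-graph, possibly with a short case analysis keyed to the intersection array listed in the table.

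The main obstacle I anticipate is the inductive step at the intermediate distances, where the intersection numbers $b_i$ and $c_i$ are larger than $1$ and the naive relations no longer pin down the generators uniquely. For such steps the argument must exploit distance-transitivity more subtly --- using part~(ii) to transport a known commuting relation across the whole distance class rather than deriving it afresh --- and must carefully combine several rows/columns of the magic unitary via the adjacency relation. I expect the Coxeter graph, the Foster graph and the Biggs--Smith graph (the ones of largest diameter) to require the most delicate bookkeeping, and this is where I would concentrate the detailed computation; the remaining graphs should follow from the same template with shorter inductions.
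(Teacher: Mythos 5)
Your overall framework coincides with the paper's: reduce to the eight remaining graphs from the Biggs--Smith classification, kill unequal-distance products by Lemma \ref{dist}, transport commutation across each distance class by Lemma \ref{disttrans}, and climb the distances using the intersection array. But the proposal leaves precisely the hard part unproved, and the induction as you describe it would not go through. First, Lemma \ref{ia} requires $c_m\geq 2$, and for every one of these graphs $c_2=1$, so it never applies at $m=2$; distances $1$ and $2$ are instead handled by girth arguments (all eight graphs have girth $\geq 5$, giving Theorem \ref{g5} and Lemma \ref{d2}). More seriously, for the Coxeter, Tutte $8$-cage, Foster and Biggs--Smith graphs one has $c_m=1$ for a whole range of intermediate $m$ (e.g.\ $c_3=\dots=c_6=1$ for the Biggs--Smith graph), so no form of Lemma \ref{ia} is available there, and these are exactly the steps for which you offer no mechanism beyond ``exploit distance-transitivity more subtly.'' The paper needs two further propagation lemmas (Lemma \ref{cubic}, exploiting $b_{m-1}\in\{1,2\}$ together with the girth, and Lemma \ref{d3}) plus genuinely graph-specific arguments: for the Tutte $8$-cage a labelling of a maximal independent set by $2$-subsets of $\{1,\dots,6\}$ via the Cremona--Richmond configuration, and for the Foster and Biggs--Smith graphs multi-step counting arguments using girth $10$ resp.\ $9$.

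Second, your induction is monotone in the distance (``pushing the commutation outward''), but for the Tutte $8$-cage and the Biggs--Smith graph the paper must establish commutation at distance $4$ \emph{before} distance $3$, because the distance-$3$ argument (Lemma \ref{d3} followed by a cancellation using generators indexed by pairs at distance $4$) relies on the distance-$4$ commutation already being known. A strictly increasing induction on $m$ has no way to start these cases. So while the skeleton is right and matches the paper, the content that actually makes the theorem true --- the case analysis occupying most of Section \ref{cubicdist} --- is missing, and the one concrete mechanism you do name (Lemma \ref{ia}) is inapplicable at most of the steps where you would need it.
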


We obtain the following table for the cubic distance-transitive graphs. 

\small
\begin{table}[h]
\begin{center}
\begin{tabular}{lclll}
\toprule
Name of $\Gamma$&Order&$\Aut(\Gamma)$&$\QBan(\Gamma)$&Intersection array\\[.5ex]
\midrule
$K_4$ (\cite{BanBic})&4&$S_4$&$S_4^+$&\{3;1\}\\
$K_{3,3}$ (\cite{BanBic})&6&$ S_3 \wr \Z_2$& $  S_3\wr_*\Z_2$&\{3,2;1,3\}\\
Cube $Q_3$ (\cite{BanBic})&8&$S_4 \times \Z_2$&$S_4^+ \times \Z_2$&\{3,2,1;1,2,3\}\\
Petersen graph(\cite{QAutPetersen})&10&$S_5$&$\Aut(\Gamma)$&\{3,2;1,1\}\\
Heawood graph&14&$PGL(2,7)$&$\Aut(\Gamma)$&\{3,2,2;1,1,3\}\\
Pappus graph&18&ord 216&$\Aut(\Gamma)$&\{3,2,2,1;1,1,2,3\}\\
Desargues graph&20&$S_5\times \Z_2$&$\Aut(\Gamma)$&\{3,2,2,1,1;1,1,2,2,3\}\\
Dodecahedron&20&$A_5\times \Z_2$&$\Aut(\Gamma)$&\{3,2,1,1,1;1,1,1,2,3\}\\
Coxeter graph&28&$PGL(2,7)$&$\Aut(\Gamma)$&\{3,2,2,1;1,1,1,2\}\\
Tutte $8$-cage&30&$\Aut(S_6)$&$\Aut(\Gamma)$&\{3,2,2,2;1,1,1,3\}\\
Foster graph&90&ord $4320$&$\Aut(\Gamma)$&\{3,2,2,2,2,1,1,1;1,1,1,1,2,2,2,3\}\\
Biggs-Smith graph&102&$PSL(2,17)$&$\Aut(\Gamma)$&\{3,2,2,2,1,1,1;1,1,1,1,1,1,3\}\\
\bottomrule
\end{tabular}
\end{center}
\vspace{0.1cm}
\caption{Quantum automorphism groups of all cubic distance-transitive graphs.}\label{table1}
\end{table}
\normalsize


In addition to the previous table, we study more distance-transitive graphs, preferably of order $\leq 20$. As mentioned in the introduction, distance-transitive graphs are distance-regular. There is only one distance-regular graph of order $\leq 20$ that is not distance-transitive, namely the Shrikhande graph. We show that this graph has no quantum symmetry in Subsection \ref{nosym}. 

Except for the Johnson graph $J(6,3)$, we now know for all distance-regular graphs with up to $20$ vertices whether or not they have quantum symmetry. Distance regular graphs of order $11 \leq n \leq 20$ that have quantum symmetry are the $4 \times 4$ rook's graph (Proposition \ref{Hamm}), the $4$-cube (\cite{Hyperoctahedral}), the Clebsch graph (\cite{foldedcube}) as well as the complete graphs $K_n$, the cycles $C_n$, the complete bipartite graphs $K_{n,n}$ and the crown graphs $(K_n \square K_2)^{c}$ for suitable $n$ (see \cite{BanBic} for those four families). The graphs of order $n \leq 11$ can be found in \cite{BanBic}.

It was already shown in \cite{BanBic} that the Paley graph $P_9$ has no quantum symmetry and in \cite{ArthurQSymm} that $P_{13}$ and $P_{17}$ have no quantum symmetry. We give alternative proofs of these facts in Subsection \ref{paley}. 

Besides graphs of order $\leq 20$, we also show that the Hoffman-Singleton graph has no quantum symmetry by proving that all strongly regular graphs with girth five have no quantum symmetry in Subsection \ref{moore}.

We get the following table, containing all distance-regular graphs with up to $20$ vertices.

\small
\begin{table}[h]
\begin{center}
\begin{tabular}{lclll}
\toprule
Name of $\Gamma$&Order&$\Aut(\Gamma)$&$\QBan(\Gamma)$&Intersection array\\[.5ex]
\midrule
Octahedron $J(4,2)$ (\cite{BanBic})&6&$Z_2 \wr S_3$&$\Z_2\wr_* S_3$&\{4,1;1,4\}\\
Cube $Q_3$ (\cite{BanBic})&8&$S_4 \times \Z_2$&$S_4^+ \times \Z_2$&\{3,2,1;1,2,3\}\\
Paley graph $P_9$ (\cite{BanBic})&9&$S_3\wr \Z_2$&$\Aut(\Gamma)$&\{4,2;1,2\}\\
Petersen graph (\cite{QAutPetersen})&10&$S_5$&$\Aut(\Gamma)$&\{3,2;1,1\}\\
Icosahedron&12&$A_5\times \Z_2$&$\Aut(\Gamma)$&\{5,2,1;1,2,5\}\\
Paley graph $P_{13}$ (\cite{ArthurQSymm})&13&$\Z_{13}\rtimes \Z_6$&$\Aut(\Gamma)$&\{6,3;1,3\}\\
Heawood graph&14&$PGL(2,7)$&$\Aut(\Gamma)$&\{3,2,2;1,1,3\}\\
co-Heawood graph&14&$PGL(2,7)$&$\Aut(\Gamma)$&\{4,3,2;1,2,4\}\\
Line graph of Petersen graph&15&$S_5$&$\Aut(\Gamma)$&\{4,2,1;1,1,4\}\\
Cube $Q_4$ (\cite{Hyperoctahedral})&16&$H_4$&$O_4^{-1}$&\{4,3,2,1;1,2,3,4\}\\
$4\times 4$ rook's graph $H(2,4)$&16&$S_4 \wr \Z_2$&?(has qsym)&\{6,3;1,2\}\\
Shrikhande graph&16&$\Z_4^2 \rtimes D_6$&$\Aut(\Gamma)$&\{6,3;1,2\}\\
Clebsch graph (\cite{foldedcube})&16&$\Z_2^4 \rtimes S_5$&$SO_5^{-1}$&\{5,4;1,2\}\\
Paley graph $P_{17}$ (\cite{ArthurQSymm})&17&$\Z_{17}\rtimes \Z_8$&$\Aut(\Gamma)$&\{8,4;1,4\}\\
Pappus graph&18&ord 216&$\Aut(\Gamma)$&\{3,2,2,1;1,1,2,3\}\\
Johnson graph $J(6,3)$&20&$S_6\times \Z_2$&?&\{9,4,1;1,4,9\}\\
Desargues graph&20&$S_5\times \Z_2$&$\Aut(\Gamma)$&\{3,2,2,1,1;1,1,2,2,3\}\\
Dodecahedron&20&$A_5\times \Z_2$&$\Aut(\Gamma)$&\{3,2,1,1,1;1,1,1,2,3\}\\
Hoffman-Singleton graph&50&$PSU(3,5^2)$&$\Aut(\Gamma)$&\{7,6;1,1\}\\
$K_n$ (\cite{BanBic})&$n$&$S_n$&$S_n^+$&\{$n-1$;1\}\\
$C_n$, $n \neq 4$ (\cite{BanBic})&$n$&$D_n$&$\Aut(\Gamma)$&$(\star^1)$\\
$K_{n,n}$ (\cite{BanBic})&$2n$&$S_n\wr \Z_2$&$S_n^+\wr_* \Z_2$&$\{n,n;1,n\}$\\
$(K_n \square K_2)^{c}$ (\cite{BanBic})&$2n$&$S_n \times \Z_2$&$S_n^+ \times \Z_2$&$(\star^2)$\\
Johnson graph $J(n,2)$,$n\geq 5$&$\binom{n}{2}$&$S_n$&$\Aut(\Gamma)$&\{$2n-4$,$n-3$;1,4\}\\
Kneser graph $K(n,2)$,$n\geq 5$&$\binom{n}{2}$&$S_n$&$\Aut(\Gamma)$&$(\star^3)$\\
Odd graphs $O_k$&$\binom{2k-1}{k-1}$&$S_{2k-1}$&$\Aut(\Gamma)$&$(\star^4)$\\
Hamming graphs $H(n,3)$&$3^n$&$S_3 \wr S_n$&$\Aut(\Gamma)$&$(\star^5)$\\
\bottomrule
\end{tabular}
\end{center}
\vspace{0.1cm}
\caption{Quantum automorphism groups of distance-regular graphs up to 20 vertices and some additional graphs.}\label{table2}
\end{table}
\normalsize
Here 
\begin{itemize}
\item[$(\star^1)$]$=\{2,1,...,1;1,...,1,2\}$ for $n$ even, $\{2,1,...,1;1,...,1,1\}$ for $n$ odd, 
\item[$(\star^2)$]$=\{n-1,n-2, 1;1,n-1,n-2\}$, 
\item[$(\star^3)$]$= \{(n-2)(n-3)/2,2n-8;1,(n-3)(n-4)/2\}$, 
\item[$(\star^4)$]$=\{k,k-1,k-1 \dots, l+1,l+1,l;1,1,2,2,\dots, l,l\}$ for $k =2l-1$,\\\hphantom{=} $\{k,k-1,k-1,\dots l+1,l+1;1,1,2,2,\dots, l-1,l-1,l\}$ for $k=2l$, 
\item[$(\star^5)$]$= \{2n,2n-2,\dots,2;1,2,\dots,n\}$.
\end{itemize}

In \cite{QAutPetersen}, we only needed the values in the intersection array of the Petersen graph to show that this graph has no quantum symmetry. Thus one might guess that the intersection array contains all information about the quantum symmetry of a graph. But this is not the case because of the following. The Shrikhande graph and the $4 \times 4$ rook's graph have the same intersection array. We see that the $4 \times 4$ rook's graph has quantum symmetry by Proposition \ref{Hamm}, whereas the Shrikhande graph has no quantum symmetry, see Subsection \ref{nosym}. This shows that in general one needs to use further graph properties to decide whether or not a graph has quantum symmetry, the intersection array is not enough.

Looking at Table \ref{table2}, we have the following questions. We know that the $4 \times 4$ rook's graph has quantum symmetry, but what is the corresponding quantum group? Does the graph $J(6,3)$, or more generally the graph $J(n,k)$ for $k \geq 3$, have quantum symmetry? Regarding Table \ref{table1}, it is known that there is only one cubic distance-regular graph that is not distance-transitive. This is the Tutte $12$-cage. We do not know whether or not this graph has quantum symmetry and leave it as an open question.
 
\section{Preliminaries}

\subsection{Finite graphs}
The definitions in this subsection are well-known and can for example be found in the books \cite{BCN}, \cite{Petersen}.

Let $\Gamma =(V,E)$ be a \emph{finite graph without multiple edges}, i.e. finite sets of \emph{vertices} $V$ and \emph{edges} $E \subset V \times V$. A graph is called \emph{undirected}, if for all $(i,j) \in E$, we also have $(j,i) \in E$. The \emph{order} of a graph denotes the number of elements in $V$, i.e. the number of vertices in the graph. 

For the rest of this article, we assume that $\Gamma$ is undirected. Let $v \in V$. The vertex $u \in V$ is called a \emph{neighbor} of $v$, if $(v,u) \in E$. A \emph{path} of length $m$ joining two vertices $i,k \in V$ is a sequence of vertices $a_0, a_1, \dots , a_m$ with $i=a_0$, $a_m=k$ such that $(a_n, a_{n+1}) \in E$ for $0\leq n \leq m-1$. A \emph{cycle} of length $m$ is a path of length $m$ where $a_0=a_m$ and all other vertices in the sequence are distinct. The \emph{degree} $\mathrm{deg}\, v$ of a vertex $v \in V$ denotes the number of edges in $\Gamma$ incident with $v$. We say that a graph $\Gamma$ is $k$-\emph{regular} for some $k \in \N_0$, if $\mathrm{deg}\, v = k$ for all $v \in V$. The $3$-regular graphs are also called \emph{cubic} graphs. 

\begin{defn}\label{girth}
Let $\Gamma$ be an undirected graph. We define the \emph{girth} $g(\Gamma)$ of a graph to be the length of a smallest cycle it contains.
\end{defn}

For example, we have $g(\mathrm{P}) =5$, where $\mathrm{P}$ denotes the Petersen Graph.

\begin{defn}\label{inset}
Let $\Gamma$ be an undirected graph. A \emph{clique} is a subset of vertices $W_1 \subseteq V$ such that any vertices are adjacent. A clique, such that there is no clique with more vertices is called \emph{maximal clique}. The \emph{clique number} of $\Gamma$ is the number of vertices of a maximal clique. On the other hand, an \emph{independent set} is a subset $W_2 \subseteq V$ such that no vertices are adjacent. 
\end{defn}

\begin{defn}
Let $\Gamma = (V,E)$ be a $k$-regular graph on $n$ vertices. We say that $\Gamma$ is \emph{strongly regular} if there exist $\lambda$, $\mu \in \N_0$ such that
\begin{itemize}
\item[$(i)$] adjacent vertices have $\lambda$ common neighbors,
\item[$(ii)$] non-adjacent vertices have $\mu$ common neighbors.
\end{itemize}
In this case, we say that $\Gamma$ has parameters $(n,k,\lambda,\mu)$.
\end{defn}

\begin{defn}
Let $\Gamma$ be an undirected graph and $v, w \in V$. 
\begin{itemize}
\item[(a)] The \emph{distance} $d(v,w)$ of two vertices is the length of a shortest path connecting $v$ and $w$.  
\item[(b)] The \emph{diameter} of $\Gamma$ is the greatest distance between any two vertices $v,w$. 
\end{itemize}
\end{defn}

\begin{defn}
Let $\Gamma$ be a regular graph. We say that $\Gamma$ is \emph{distance-regular} if for two vertices $v$, $w$, the number of vertices at distance $k$ to $v$ and at distance $l$ to $w$ only depend on $k$, $l$ and $d(v,w)$. 
\end{defn}

\begin{rem}
The strongly regular graphs are exactly the distance-regular graphs with diameter two. 
\end{rem}

The next definition introduces the intersection array. The intersection array is important to understand the structure of a distance-regular graph. 

\begin{defn}\label{interarray}
Let $\Gamma$ be a distance-regular graph with diameter $d$. The \emph{intersection array} of $\Gamma$ is a sequence of integers $\{b_0,b_1,\dots, b_{d-1}; c_1, c_2, \dots, c_d\}$, such that for any two vertices $v,w$ at distance $d(v,w) = i$, there are exactly $b_i$ neighbors of $w$ at distance $i+1$ to $v$ and exactly $c_i$ neighbors of $w$ at distance $i-1$ to $v$. 
\end{defn}

We will now give the definition of distance-transitive graphs. Besides the \linebreak Shrikhande graph, all graphs appearing in this article are distance-transitive. 

\begin{defn}\label{distance-transitive}
Let $\Gamma$ be a regular graph. We say that $\Gamma$ is \emph{distance-transitive} if for all $(i,k), (j,l) \in V \times V$ with $d(i,k)=d(j,l)$, there is an automorphism $\phi \in \Aut(\Gamma)$ with $\phi(i)=j, \phi(k)=l$. 
\end{defn}

\begin{rem}
Let $\Gamma$ be a distance-transitive graph and let $v,w \in V$. Since we have an automorphism $\phi \in \Aut(\Gamma)$ with $\phi(v)=x, \phi(w)=y$ for every pair of vertices $x,y$ with $d(x,y)=d(v,w)$, we see that the number of vertices at distance $k$ to $v$ and at distance $l$ to $w$ only depend on $k$, $l$ and $d(v,w)$. Thus, we see that every distance-transitive graph is distance-regular. 
\end{rem}

We recall the definition of line graphs and incidence graphs since those constructions will be used explicitely in this article. 

\begin{defn}
Let $\Gamma$ be a finite, undirected graph. The \emph{line graph} $L(\Gamma)$ of $\Gamma$ is the graph whose vertices correspond to edges of $\Gamma$ and whose vertices are connected if and only if the corresponding edges are incident in $\Gamma$ (i.e. are connected by a vertex). 
\end{defn}

See for example \cite{Coxeter} for the next definition. 

\begin{defn}
Given $c,d \in \N$, a \emph{configuration} $(P,L)$ consists of points $P=\{p_1, \dots, p_a\}$ and lines $L=\{L_1,\dots, L_b\}$ in a plane, such that
\begin{itemize}
\item[(i)] there are $c$ points on each line and $d$ lines through each point,
\item[(ii)] two different lines intersect each other at most once,
\item[(iii)] two different points are connected by one line at most.
\end{itemize}
\end{defn}

\begin{defn}
Let $(P,L)$ be a configuration with points $P=\{p_1, \dots, p_a\}$ and lines $L=\{L_1,\dots, L_b\}$. Then the \emph{incidence graph} of the configuration is a bipartite graph consisting of vertices $P \cup L$. Here $P$ and $L$ are independent sets (in the sense of Definition \ref{inset}) and we have an edge between $p_j$ and $L_k$ if and only if $p_j$ is adjacent to $L_k$ in the configuration. 
\end{defn}

The following definitions concern the automorphism group of a graph. This will be generalized in the next subsection. We start with the definition of the adjacency matrix.

\begin{defn}
Let $\Gamma$ be a finite graph of order $n$, without multiple edges. The \emph{adjacency matrix} $\varepsilon \in M_n(\{0,1\})$ is the matrix where $\varepsilon_{ij} = 1$ if $(i,j) \in E$ and $\varepsilon_{ij} = 0$ otherwise. 
\end{defn}

\begin{defn}
Let $\Gamma=(V,E)$ be a finite graph without multiple edges. A \emph{graph automorphism} is a bijection $\sigma: V \to V$ such that $(i,j) \in E$ if and only if $(\sigma(i), \sigma(j)) \in E$. The set of all graph automorphisms of $\Gamma$ forms a group, the \emph{automorphism group} $\Aut(\Gamma)$. If $\Gamma$ has $n$ vertices, we can view $\Aut(\Gamma)$ as a subgroup of the symmetric group $S_n$, in the following way.
\begin{align*}
\Aut(\Gamma) = \{ \sigma \in S_n\;|\; \sigma \varepsilon = \varepsilon \sigma\}\subset S_n.
\end{align*} 
Here $\varepsilon$ denotes the adjacency matrix of the graph. 
\end{defn}

\subsection{Quantum automorphism groups of finite graphs}
We start with the definition of compact matrix quantum groups. Those were defined by Woronowicz \cite{CMQG1, CMQG2} in 1987.

\begin{defn}
A \emph{compact matrix quantum group} $G$ is a pair $(C(G),u)$, where $C(G)$ is a unital (not necessarily commutative) $C^*$-algebra which is generated by $u_{ij}$, $1 \leq i,j \leq n$, the entries of a matrix $u \in M_n(C(G))$. Moreover, the *-homomorphism $\Delta: C(G) \to C(G) \otimes C(G)$, $u_{ij} \mapsto \sum_{k=1}^n u_{ik} \otimes u_{kj}$ must exist, and $u$ and its transpose $u^{t}$ must be invertible. 
\end{defn}

The \emph{quantum symmetric group} $S_n^+$ defined by Wang \cite{WanSn} is the quantum analogue of the symmetric group $S_n$. It is the compact matrix quantum group, where
\begin{align*}
C(S_n^+) := C^*(u_{ij}, \, 1 \leq i,j \leq n \, | \, u_{ij} = u_{ij}^* = u_{ij}^2, \, \sum_{l} u_{il} = \sum_{l} u_{li} =1).
\end{align*} 

Now, we are ready to define quantum automorphism groups of finite graphs. Those are quantum subgroups of $S_n^+$. The following definition was given by Banica \cite{QBan} in 2005.

\begin{defn}
Let $\Gamma = (V, E)$ be a finite graph on $n$ vertices $V = \{1, ... , n \}$. The \emph{quantum automorphism group} $\QBan(\Gamma)$ is the compact matrix quantum group $(C(\QBan(\Gamma)), u)$, where $C(\QBan(\Gamma))$ is the universal $C^*$-algebra with generators $u_{ij}$, $1 \leq i,j \leq n$ and relations
\begin{align}\allowdisplaybreaks
&u_{ij} = u_{ij}^*= u_{ij}^2, &&1 \leq i,j,k \leq n,\label{QA1}\\ 
&\sum_{l=1}^n u_{il} = 1 = \sum_{l=1}^n u_{li}, &&1 \leq i \leq n,\label{QA2}\\
&u_{ji}u_{lk} = u_{lk}u_{ji} = 0, && (i,k) \notin E, (j,l) \in E,\label{QA3}\\
&u_{ij}u_{kl} = u_{kl}u_{ij} = 0, && (i,k) \notin E, (j,l) \in E,\label{QA4}
\end{align} 
where \eqref{QA3} and \eqref{QA4} are equivalent to $u\varepsilon = \varepsilon u$. 
\end{defn}

There is another definition of quantum automorphism groups by Bichon \cite{QBic} in 2003. This is a quantum subgroup of the one defined by Banica. 

\begin{defn}
Let $\Gamma = (V, E)$ be a finite graph on $n$ vertices $V = \{1, ... , n \}$. The \emph{quantum automorphism group} $\QBic(\Gamma)$ is the compact matrix quantum group $(C(\QBic(\Gamma)), u)$, where $C(\QBic(\Gamma))$ is the universal $C^*$-algebra with generators $u_{ij}$, $1 \leq i,j \leq n$, Relations \eqref{QA1} -- \eqref{QA4} and
\begin{align}\allowdisplaybreaks
&u_{ij}u_{kl} = u_{kl}u_{ij}, &&(i,k), (j,l) \in E.\label{QA5}
\end{align} 
\end{defn}

The next definiton was given by Banica and Bichon in \cite{BanBic}. 

\begin{defn}
Let $\Gamma = (V,E)$ be a finite graph. We say that $\Gamma$ has \emph{no quantum symmetry} if $C(\QBan(\Gamma))$ is commutative, or equivalently
\begin{align*}
\QBan(\Gamma) = \Aut(\Gamma).
\end{align*}
\end{defn} 

\section{Tools for proving commutativity of the generators}\label{tools}
In this section, we develop tools to obtain commutation relations between the generators of the quantum automorphism group. The following, well-known fact can be found for example in \cite{QAutPetersen}. 

\begin{lem}\label{com}
Let $(u_{ij})_{1 \leq i,j \leq n}$ be the generators of $C(\QBan(\Gamma))$. If we have 
\begin{align*}
u_{ij}u_{kl} = u_{ij}u_{kl}u_{ij}
\end{align*}
 then $u_{ij}$ and $u_{kl}$ commute.
\end{lem}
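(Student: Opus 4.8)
The plan is to exploit the fact that, by Relation \eqref{QA1}, every generator $u_{ij}$ is a self-adjoint projection in the $C^*$-algebra $C(\QBan(\Gamma))$. To lighten notation I would write $p := u_{ij}$ and $q := u_{kl}$, so that the hypothesis reads $pq = pqp$ and the goal is to establish $pq = qp$. The entire argument will turn on taking adjoints and using that $p^* = p$ and $q^* = q$.

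First I would observe that the element $pqp$ is self-adjoint: since $p$ and $q$ are self-adjoint, we have $(pqp)^* = p^*q^*p^* = pqp$. This is the key structural remark, and it is exactly where the projection relation \eqref{QA1} enters.

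Next I would take the adjoint of both sides of the hypothesis $pq = pqp$. On the left, $(pq)^* = q^*p^* = qp$; on the right, $(pqp)^* = pqp$ by the previous step. Hence $qp = pqp$. Combining this with the hypothesis $pqp = pq$ gives $qp = pq$, which is precisely the desired commutation relation $u_{ij}u_{kl} = u_{kl}u_{ij}$.

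I do not expect any genuine obstacle here: once the self-adjointness of the generators is in hand, the statement follows from a single application of the $*$-operation, with no need for positivity arguments, norm estimates, or the graph relations \eqref{QA2}--\eqref{QA4}. The only point requiring care is to record explicitly that $u_{ij}$ is a projection (so that its adjoint is itself), since that is the one ingredient making the short computation go through.
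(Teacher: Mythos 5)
Your proof is correct and is essentially the same as the paper's: the paper's one-line proof ("Since $u_{ij}u_{kl}u_{ij}$ is selfadjoint, we infer the result") is exactly the adjoint-taking argument you spell out. You have simply written out the details that the paper leaves implicit.
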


\begin{proof}
Since $u_{ij}u_{kl}u_{ij}$ is selfadjoint, we infer the result. 
\end{proof}

The next lemma yields that, if we want to show that a graph has no quantum symmetry, it suffices to look at words $u_{ij}u_{kl}$, where $d(i,k)=d(j,l)$. 

\begin{lem}\label{dist}
Let $\Gamma$ be a finite, undirected graph and let $(u_{ij})_{1 \leq i,j \leq n}$ be the generators of $C(\QBan(\Gamma))$. If we have $d(i,k) \neq d(j,l)$, then $u_{ij}u_{kl} = 0$.
\end{lem}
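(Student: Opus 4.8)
The plan is to turn the matrix identity $u\varepsilon = \varepsilon u$ (which by the remark after \eqref{QA4} is equivalent to \eqref{QA3}--\eqref{QA4}) into a purely combinatorial identity involving walk counts, and then read off the distances. First I would iterate the intertwining relation: since $u\varepsilon = \varepsilon u$, an immediate induction gives $u\varepsilon^m = \varepsilon^m u$ for every $m \geq 0$. Writing out the $(i,l)$-entry of this identity yields
\[ \sum_z u_{iz}(\varepsilon^m)_{zl} = \sum_z (\varepsilon^m)_{iz}u_{zl}. \]
I would also record the standard magic-unitary orthogonality coming from \eqref{QA1}--\eqref{QA2}: since the projections in each row (respectively column) sum to $1$, they are pairwise orthogonal, i.e. $u_{ij}u_{iz} = \delta_{jz}u_{ij}$ and $u_{zl}u_{kl} = \delta_{zk}u_{kl}$.

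The key step is to multiply this entrywise identity on the left by $u_{ij}$ and on the right by $u_{kl}$. On the left-hand side the row-orthogonality collapses the sum to the single term $z = j$, while on the right-hand side the column-orthogonality collapses it to $z = k$; the quantities $(\varepsilon^m)_{zl}$ and $(\varepsilon^m)_{iz}$ are scalars and pass freely through the generators. This should leave the scalar relation
\[ (\varepsilon^m)_{jl}\, u_{ij}u_{kl} = (\varepsilon^m)_{ik}\, u_{ij}u_{kl}, \qquad m \geq 0, \]
equivalently $\big[(\varepsilon^m)_{jl} - (\varepsilon^m)_{ik}\big]\,u_{ij}u_{kl} = 0$ for every $m$.

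Finally I would interpret the coefficients combinatorially: $(\varepsilon^m)_{xy}$ counts the walks of length $m$ from $x$ to $y$, so $d(x,y)$ is the least $m$ with $(\varepsilon^m)_{xy} > 0$. Assuming $d(i,k) \neq d(j,l)$, let $m$ be the smaller of the two distances; say $m = d(i,k) < d(j,l)$, the reverse case being symmetric. Then a shortest $i$--$k$ path is a walk of length $m$, so $(\varepsilon^m)_{ik} \geq 1$, whereas no walk of length $m < d(j,l)$ joins $j$ and $l$, so $(\varepsilon^m)_{jl} = 0$. Plugging into the displayed scalar relation gives $(\varepsilon^m)_{ik}\,u_{ij}u_{kl} = 0$ with $(\varepsilon^m)_{ik}$ a nonzero (positive-integer) scalar, hence $u_{ij}u_{kl} = 0$.

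The one point that needs care — and the reason I avoid the more naive route of showing that $u$ commutes with each distance matrix $A_s$ — is that for a general graph the $A_s$ are \emph{not} polynomials in $\varepsilon$ (this happens precisely in the distance-regular case), so the identity $uA_s = A_s u$ is simply not available. The sandwiching by $u_{ij}$ and $u_{kl}$ is exactly what sidesteps this difficulty: it reduces the operator identity $u\varepsilon^m = \varepsilon^m u$ to a scalar walk-count identity, in which the distance information is encoded directly. Beyond spotting this reduction the argument is routine, the only thing to verify carefully being the two orthogonality collapses in the key step.
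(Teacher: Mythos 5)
Your proof is correct, and every step checks out: the sandwiched identity $(\varepsilon^m)_{jl}\,u_{ij}u_{kl} = (\varepsilon^m)_{ik}\,u_{ij}u_{kl}$ does follow from $u\varepsilon^m=\varepsilon^m u$ together with the row- and column-orthogonality of the magic unitary, and evaluating at $m=\min\{d(i,k),d(j,l)\}$ makes one walk count a positive integer and the other zero. The paper proves the lemma by a more hands-on version of the same combinatorial idea: after disposing of the cases $m=0$ (orthogonality in a row) and $m=1$ (Relation \eqref{QA3}) separately, it fixes one shortest path $i,a_1,\dots,a_{m-1},k$, inserts the resolutions of identity $\sum_{b_s}u_{a_sb_s}=1$ between $u_{ij}$ and $u_{kl}$, and kills every summand using Relation \eqref{QA3}, since the sequence $j,b_1,\dots,b_{m-1},l$ can never be a walk when $d(j,l)>m$. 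So the combinatorial heart --- walks of length $d(i,k)$ join $i$ to $k$ but none of that length joins $j$ to $l$ --- is identical, but your packaging via powers of the adjacency matrix is cleaner: it needs no case distinction, no choice of a particular path, and it in fact yields the slightly stronger statement that $u_{ij}u_{kl}=0$ as soon as $(\varepsilon^m)_{ik}\neq(\varepsilon^m)_{jl}$ for a single $m$. What the paper's expansion buys in return is that it stays entirely inside the generator--relation calculus of Relations \eqref{QA2} and \eqref{QA3}, which is the template reused throughout Section \ref{tools}, for instance in Lemma \ref{abk}. Your closing remark that the distance matrices are not in general polynomials in $\varepsilon$ outside the distance-regular setting is also correct, and it accurately identifies why the sandwiching step, rather than a naive commutation with distance matrices, is the right move.
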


\begin{proof}
We may assume $m:=d(i,k) < d(j,l)$. For $m=0$, we get $i=k$ and hence $u_{ij}u_{il} =0$ since $u_{ij}, u_{il}$ are orthogonal projections by Relations \eqref{QA1}, \eqref{QA2}. If $m =1$, then Relation \eqref{QA3} yields the assertion. Otherwise, there is a path of length $m\geq2$ from $i$ to $k$, say $i, a_1, a_2, \dots, a_{m-1},k$. By Relation \eqref{QA2}, we get\allowdisplaybreaks
\begin{align*}
u_{ij} u_{kl} &= u_{ij}\left(\sum_{b_1} u_{a_1b_1}\right)\left(\sum_{b_2} u_{a_2b_2}\right)\dots \left(\sum_{b_{m-1}} u_{a_{m-1}b_{m-1}}\right)u_{kl}\\
&=\sum_{b_1, \dots, b_{m-1}} u_{ij}u_{a_1b_1} u_{a_2b_2}\dots u_{a_{m-1}b_{m-1}}u_{kl}.
\end{align*}
Since $d(j,l) > m$, there is no path of length $m$ between $j$ and $l$. Thus, for all $b_0:=j, b_1, \dots, b_{m-1}, b_m :=l$, there are two vertices $b_x$, $b_{x+1}$ with $(b_x, b_{x+1} )\notin E$. We get $u_{a_xb_x}u_{a_{x+1}b_{x+1}} =0$ by Relation \eqref{QA3} and therefore
\begin{align*}
 u_{ij}u_{a_1b_1} u_{a_2b_2}\dots u_{a_{m-1}b_{m-1}}u_{kl} = 0
\end{align*}
for all $b_1, \dots, b_{m-1}$. We conclude
\begin{align*}
u_{ij} u_{kl} =\sum_{b_1,\dots ,b_{m-1}} u_{ij}u_{a_1b_1} u_{a_2b_2}\dots u_{a_{m-1}b_{m-1}}u_{kl}=0.
\end{align*}
\end{proof}

The following lemma concerns distance-transitive graphs. It allows us to work with one pair of vertices $(j_1, l_1)$ in distance $m$ to obtain commutativity of all $u_{ij}$, $u_{kl}$ with $d(i,k)=d(j,l)=m$. 

\begin{lem}\label{disttrans}
Let $\Gamma$ be a distance-transitive graph and let $(u_{ij})_{1 \leq i,j \leq n}$ be the generators of $C(\QBan(\Gamma))$. Let $j_1, l_1 \in V$ and put $m:=d(j_1,l_1)$. If $u_{aj_1}u_{bl_1} =u_{bl_1}u_{aj_1}$ for all $a,b$ with $d(a,b)=m$, then we have $u_{ij}u_{kl} =u_{kl}u_{ij}$ for all $i,k, j,l$ with $d(j,l)=m =d(i,k)$.
\end{lem}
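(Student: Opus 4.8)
The plan is to exploit the defining hypothesis, which pins the second (column) indices to the fixed pair $(j_1,l_1)$, and to \emph{relabel} these column indices to an arbitrary pair $(j,l)$ at distance $m$ by means of a classical graph automorphism. The bridge between classical automorphisms and the generators $u_{ij}$ will be a family of $*$-homomorphisms of $C(\QBan(\Gamma))$ that act on the column index through a permutation coming from $\Aut(\Gamma)$.

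Concretely, for $\phi \in \Aut(\Gamma)$ I would define a map on generators by $u_{ij} \mapsto u_{i\phi(j)}$ and check that it respects Relations \eqref{QA1}--\eqref{QA4}, so that by universality of $C(\QBan(\Gamma))$ it extends to a $*$-homomorphism $\beta_\phi$. Relations \eqref{QA1} and \eqref{QA2} are immediate, since $j \mapsto \phi(j)$ merely permutes the columns. The only relations for which one must invoke that $\phi$ is a graph automorphism are \eqref{QA3} and \eqref{QA4}: if $(i,k) \notin E$ then $(\phi(i),\phi(k)) \notin E$ as well, so the image $u_{j\phi(i)}u_{l\phi(k)}$ of a vanishing product vanishes again. (Alternatively, one may take $\beta_\phi = (\mathrm{id}\otimes \chi_\phi)\circ \Delta$, where $\chi_\phi$ is the character of $C(\QBan(\Gamma))$ associated to $\phi$; this makes well-definedness automatic.) Note that I only need $\beta_\phi$ to be a $*$-homomorphism, not an automorphism, since I will merely push a single identity forward through it.

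With $\beta_\phi$ in hand the argument is short. Fix any $j,l \in V$ with $d(j,l)=m$. Since $\Gamma$ is distance-transitive and $d(j_1,l_1)=m=d(j,l)$, Definition \ref{distance-transitive} provides $\phi \in \Aut(\Gamma)$ with $\phi(j_1)=j$ and $\phi(l_1)=l$. Applying the homomorphism $\beta_\phi$ to the hypothesis $u_{aj_1}u_{bl_1}=u_{bl_1}u_{aj_1}$ and using $\beta_\phi(u_{aj_1})=u_{a\phi(j_1)}=u_{aj}$ and $\beta_\phi(u_{bl_1})=u_{b\phi(l_1)}=u_{bl}$, I obtain
\begin{align*}
u_{aj}u_{bl}=u_{bl}u_{aj} \qquad \text{for all } a,b \text{ with } d(a,b)=m.
\end{align*}
In particular, given any $i,k$ with $d(i,k)=m$, taking $a=i$ and $b=k$ yields $u_{ij}u_{kl}=u_{kl}u_{ij}$. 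As $(j,l)$ was an arbitrary pair at distance $m$, this establishes the claim for all $i,k,j,l$ with $d(i,k)=m=d(j,l)$.

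The only genuine step is the first one: verifying that $u_{ij}\mapsto u_{i\phi(j)}$ descends to a well-defined $*$-homomorphism, that is, that it respects the adjacency relations \eqref{QA3} and \eqref{QA4}. This is precisely where the hypothesis that $\phi$ preserves both edges and non-edges is used, and I expect it to be the main (though mild) obstacle. Everything afterwards is bookkeeping: distance-transitivity supplies the relabelling automorphism, and the homomorphism property transports the commutation identity from the fixed column pair $(j_1,l_1)$ to an arbitrary one.
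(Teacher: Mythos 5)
Your proposal is correct and follows essentially the same route as the paper: distance-transitivity supplies an automorphism $\phi$ carrying $(j_1,l_1)$ to $(j,l)$, this lifts to a $*$-homomorphism of $C(\QBan(\Gamma))$ permuting the generators (well-defined because $\phi$ commutes with the adjacency matrix, so Relations \eqref{QA3} and \eqref{QA4} are preserved), and the commutation identity is pushed forward. The only difference is cosmetic: the paper conjugates on both indices via $u_{ij}\mapsto u_{\phi(i)\phi(j)}$ and then evaluates the hypothesis at $a=\phi^{-1}(i)$, $b=\phi^{-1}(k)$, whereas you translate only the column index via $u_{ij}\mapsto u_{i\phi(j)}$ and evaluate at $a=i$, $b=k$; both are valid.
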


\begin{proof}
Let $j_1,l_1 \in V$ and $u_{aj_1}u_{bl_1} =u_{bl_1}u_{aj_1}$ for all $a,b$ with $d(a,b)=m$. The map $\tilde{\phi}: C(\QBan(\Gamma)) \to C(\QBan(\Gamma)), u_{ij} \mapsto (\phi u\phi^{-1})_{ij}=u_{\phi(i)\phi(j)}$ is a *-isomorphism for all $\phi \in \Aut(\Gamma)$, because we have $\epsilon(\phi u\phi^{-1})=(\phi u\phi^{-1})\epsilon$ since $\phi$ and $\phi^{-1}$ commute with the adjacency matrix $\epsilon$ of $\Gamma$. For all pairs $j,l$ with $d(j,l)=m$, there is a graph automorphism $\phi_{j,l}$ with $\phi_{j,l}(j_1)=j$, $\phi_{j,l}(l_1)=l$, since $\Gamma$ is distance-transitive (see Definition \ref{distance-transitive}). Let $\tilde{\phi}_{j,l}$ be the $*$-isomorphism corresponding to $\phi_{j,l}$. We obtain
\begin{align*}
u_{ij}u_{kl} = \tilde{\phi}_{j,l}(u_{\phi_{j,l}^{-1}(i),j_1}u_{\phi_{j,l}^{-1}(k),l_1})=\tilde{\phi}_{j,l}(u_{\phi_{j,l}^{-1}(k),l_1}u_{\phi_{j,l}^{-1}(i),j_1}) =u_{kl}u_{ij},
\end{align*}
for all $i,j,k,l$ with $d(j,l)=m =d(i,k)$,  since we know $d(\phi_{j,l}^{-1}(i), \phi_{j,l}^{-1}(k))=m$ and thus have $u_{\phi_{j,l}^{-1}(i),j_1}u_{\phi_{j,l}^{-1}(k),l_1}=u_{\phi_{j,l}^{-1}(k),l_1}u_{\phi_{j,l}^{-1}(i),j_1}$ by assumption. 
\end{proof}

For the rest of this section, we give criteria on properties of the graph $\Gamma$ (for example the girth or the intersection array) that allow us to say that certain generators of $\QBan(\Gamma)$ commute. The following theorem generalizes Theorem 3.2 of \cite{QAutPetersen}. Recall that the girth $g(\Gamma)$ of a graph was defined in Definition \ref{girth}. 

\begin{thm}\label{g5}
Let $\Gamma$ be an undirected graph with girth $g(\Gamma) \geq5$. Then $\QBan(\Gamma) = \QBic(\Gamma)$.
\end{thm}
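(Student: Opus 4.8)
The plan is to show that the extra relation \eqref{QA5} of Bichon's definition, namely $u_{ij}u_{kl}=u_{kl}u_{ij}$ whenever $(i,k),(j,l)\in E$, is automatically satisfied in $C(\QBan(\Gamma))$ when $g(\Gamma)\geq 5$. Since $\QBic(\Gamma)$ is by definition the quotient of $C(\QBan(\Gamma))$ by these commutation relations, proving that they already hold in $C(\QBan(\Gamma))$ yields an isomorphism of the two $C^*$-algebras and hence $\QBan(\Gamma)=\QBic(\Gamma)$. By Lemma \ref{com}, it suffices to prove for each such pair that
\begin{align*}
u_{ij}u_{kl}=u_{ij}u_{kl}u_{ij}.
\end{align*}
So the real target is this one identity for all edges $(i,k),(j,l)\in E$.

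\textbf{Main computation.} First I would use Relation \eqref{QA2} to expand the middle, inserting a resolution of the identity $\sum_{b} u_{kb}=1$ adjacent to the rightmost factor, or more cleanly insert $1=\sum_{a}u_{ka}$ so as to rewrite $u_{ij}u_{kl}u_{ij}$ by summing over a row. The key idea is that $g(\Gamma)\geq 5$ forces the neighborhood structure to be very rigid: since $(i,k)$ is an edge and $(j,l)$ is an edge, I want to insert a sum over a common neighbor. Concretely, I expect to write $u_{ij}u_{kl}u_{ij}$ and expand one of the outer $u_{ij}$ factors against a sum $\sum_{a}u_{ca}$ where $c$ is chosen to be a common neighbor of the relevant vertices; the high girth then kills almost every term in the resulting sum. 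The mechanism is that between two vertices at distance $2$ there is, by $g(\Gamma)\geq 5$, a \emph{unique} common neighbor (two distinct common neighbors would create a $4$-cycle), and between adjacent vertices there is no common neighbor (a common neighbor of adjacent $i,k$ would give a triangle, contradicting girth $\geq 5$). Using Relation \eqref{QA3}/\eqref{QA4} to annihilate products $u_{ab}u_{cd}$ with $(a,c)$ and $(b,d)$ mismatched with respect to adjacency, together with Lemma \ref{dist} to kill products of generators whose row/column distances disagree, I expect all but one summand to vanish, leaving exactly $u_{ij}u_{kl}$.

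\textbf{Organizing the case analysis.} To make the annihilation arguments uniform, I would track distances: in $u_{ij}u_{kl}u_{ij}$ the rows $i,k,i$ and the columns $j,l,j$ carry the constraints $d(i,k)=1$ and $d(j,l)=1$. After inserting a sum $\sum_{c}$ ranging over some vertex (a neighbor of $k$, say) in the appropriate slot, each surviving term must be compatible with all the edge-relations \eqref{QA3}, \eqref{QA4} and the distance-relation of Lemma \ref{dist}; the girth hypothesis is precisely what guarantees that the combinatorial constraints (uniqueness of paths of length $2$, absence of triangles) pin down the surviving index. I expect the cleanest route is to show $u_{ij}u_{kl}=\sum_{a:\,(a,\,\cdot)}u_{ij}u_{kl}u_{ia}\cdots$ collapses to a single term equal to $u_{ij}u_{kl}u_{ij}$, giving the desired selfadjoint expression.

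\textbf{Anticipated obstacle.} The hard part will be the bookkeeping of the index that survives: I must verify that after inserting the resolution of identity and discarding the zero terms, the one remaining summand is exactly $u_{ij}u_{kl}u_{ij}$ and not some other product. This requires carefully using that a path of length two between two vertices at distance two is unique under $g(\Gamma)\geq 5$, and checking the degenerate configurations (e.g. when $i,j,k,l$ are not all distinct, or when $d(i,l)$ or $d(j,k)$ takes a boundary value) do not spoil the uniqueness argument. Everything else — the reduction via Lemma \ref{com}, the expansion via \eqref{QA2}, and the vanishing via \eqref{QA3}, \eqref{QA4}, Lemma \ref{dist} — is routine once the surviving term is correctly identified.
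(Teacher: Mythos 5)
Your proposal is correct and follows essentially the same route as the paper: reduce to $u_{ij}u_{kl}=u_{ij}u_{kl}u_{ij}$ via Lemma \ref{com}, append $\sum_{s:(l,s)\in E}u_{is}$ on the right using \eqref{QA2} and \eqref{QA3}, and then for each $s\neq j$ insert $\sum_a u_{ka}$ in the middle so that girth $\geq 5$ (no triangles, hence $d(j,s)=2$ with $l$ as unique common neighbor) forces $u_{ij}u_{ka}u_{is}=0$ for all $a\neq l$, whence $u_{ij}u_{kl}u_{is}=u_{ij}u_{is}=0$. The bookkeeping you flag as the anticipated obstacle works out exactly as you predict, and only \eqref{QA1}--\eqref{QA4} are needed (Lemma \ref{dist} plays no role here).
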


\begin{proof}
Let $(u_{ij})_{1 \leq i,j \leq n}$ be the generators of $C(\QBan(\Gamma))$ and let $(i,k) \in E$, $(j,l) \in E$. It holds
\begin{align*}
u_{ij}u_{kl} = u_{ij}u_{kl}\left(\sum_{s;(l,s) \in E} u_{is}\right)
 \end{align*} 
by Relations \eqref{QA2} and \eqref{QA3}. 

Take $s \neq j$ with $(l,s) \in E$. Since we also have $(j,l) \in E$, the only common neighbor of $j$ and $s$ is $l$ as otherwise we would get a quadrangle in $\Gamma$, contradicting $g(\Gamma) \geq 5$. Hence, for all $a \neq l$, we have $(a,s) \notin E$ or $(a,j) \notin E$. Then Relation \eqref{QA3} implies $u_{ka}u_{is} =0$ or $u_{ij}u_{ka} = 0$ for all $a \neq l$. By also using Relations \eqref{QA1} and \eqref{QA2}, we get 
\begin{align*}
u_{ij}u_{kl}u_{is} = u_{ij}\left(\sum_{a=1}^n u_{ka}\right)u_{is}
=u_{ij}u_{is}
=0.
\end{align*} 
Therefore, we obtain
\begin{align*}
u_{ij}u_{kl} = u_{ij}u_{kl}\left(\sum_{s;(l,s) \in E} u_{is} \right) = u_{ij}u_{kl}u_{ij}
\end{align*}
and we conclude $u_{ij}u_{kl} = u_{kl}u_{ij}$ by Lemma \ref{com}. This yields $\QBan(\Gamma) = \QBic(\Gamma)$.
\end{proof}

The upcoming lemma deals with graphs where adjacent vertices have one common neighbor. 

\begin{lem}\label{oneneighbor}
Let $\Gamma$ be an undirected graph such that adjacent vertices have exactly one common neighbor. Then $\QBan(\Gamma) = \QBic(\Gamma)$. In particular,  we have $\QBan(\Gamma) = \QBic(\Gamma)$ for distance-regular graphs with $b_0= b_1+2$ in the intersection array.
\end{lem}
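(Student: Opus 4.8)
The plan is to show that the hypothesis ``adjacent vertices have exactly one common neighbor'' lets us run essentially the same argument as in Theorem \ref{g5}, even though such a graph may well contain quadrangles (so the girth hypothesis fails). The goal in both cases is to reach the identity $u_{ij}u_{kl} = u_{ij}u_{kl}u_{ij}$ for every edge $(i,k)$ and every edge $(j,l)$, after which Lemma \ref{com} gives commutativity and hence Relation \eqref{QA5}, i.e. $\QBan(\Gamma) = \QBic(\Gamma)$.

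First I would fix edges $(i,k) \in E$ and $(j,l) \in E$ and, exactly as in Theorem \ref{g5}, write
\begin{align*}
u_{ij}u_{kl} = u_{ij}u_{kl}\left(\sum_{s;\,(l,s) \in E} u_{is}\right)
\end{align*}
using Relations \eqref{QA2} and \eqref{QA3} (the sum over neighbors $s$ of $l$ absorbs the full row sum because the non-neighbor terms are killed by $u_{kl}u_{is}=0$). The term $s=j$ contributes exactly $u_{ij}u_{kl}u_{ij}$, so it suffices to show that every term with $s \neq j$, $(l,s)\in E$, vanishes. For such an $s$, both $j$ and $s$ are neighbors of $l$; the hypothesis that \emph{adjacent} vertices have exactly one common neighbor applies to the adjacent pair $j,l$, whose unique common neighbor I will want to identify so as to constrain which vertices $a$ can be simultaneously adjacent to $s$ and to $j$. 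The key point is that for $a \neq l$ one cannot have both $(a,s)\in E$ and $(a,j)\in E$: such an $a$ together with $l$ would give two common neighbors of $j$ and $l$ (one must check $a \ne l$ forces the contradiction), violating the one-common-neighbor condition. Hence for each $a \neq l$ either $u_{ka}u_{is}=0$ or $u_{ij}u_{ka}=0$ by Relation \eqref{QA3}, and inserting $\sum_a u_{ka}=1$ collapses $u_{ij}u_{kl}u_{is}$ to $u_{ij}u_{is}=0$ (the last equality because $s\neq j$ makes $u_{ij},u_{is}$ orthogonal projections in the same row).

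The main obstacle, and the place where the argument genuinely differs from the girth-five case, is justifying the step ``$j$ and $s$ have $l$ as their only common neighbor.'' In Theorem \ref{g5} this came for free from the absence of quadrangles; here $j$ and $s$ need not be adjacent, so the hypothesis does not directly apply to the pair $(j,s)$. I expect the correct formulation is to apply the one-common-neighbor condition to the edge $(j,l)$ (or $(l,s)$) rather than to the pair $(j,s)$: any vertex $a\neq l$ adjacent to both $j$ and $s$ would make $a$ and $l$ two distinct common neighbors of the adjacent pair governing the situation, and I would need to verify carefully that the roles of $j,l,s$ are set up so this really produces two common neighbors of a single edge. Once that incidence bookkeeping is pinned down, the rest is the verbatim computation from Theorem \ref{g5}. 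For the final ``in particular'' claim I would simply note that in a distance-regular graph the number of common neighbors of two adjacent vertices is $k - b_1 - c_1 = b_0 - b_1 - 1$ (since $c_1=1$ always and $k=b_0$), so $b_0 = b_1 + 2$ is exactly the condition that this count equals $1$, placing such graphs under the hypothesis already proved.
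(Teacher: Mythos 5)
There is a genuine gap, and it sits exactly at the spot you flagged as ``the main obstacle'': the hoped-for incidence bookkeeping does not exist. If $s\neq j$ is a neighbor of $l$ and $a\neq l$ is adjacent to both $j$ and $s$, then $a$ is a common neighbor of the pair $(j,s)$ --- not of the edge $(j,l)$ and not of the edge $(l,s)$ (for either of those you would additionally need $(a,l)\in E$). So no edge acquires two common neighbors, and the hypothesis is not violated. This is not a fixable bookkeeping issue but a real failure: the Paley graph $P_9$ is strongly regular with parameters $(9,4,1,2)$ and satisfies the hypothesis of the lemma ($\lambda=1$), yet for an edge $(j,l)$ and a neighbor $p$ of $l$ with $p\notin\{j,p_1\}$ (where $p_1$ is the unique common neighbor of $j$ and $l$) one has $d(j,p)=2$, so $j$ and $p$ have $\mu=2$ common neighbors, namely $l$ and some $a\neq l$ with $(a,j),(a,p)\in E$. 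Your key claim is therefore false precisely on one of the graphs the paper applies this lemma to.

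The paper's proof acknowledges this by splitting the neighbors of $l$ into two classes. For $p_1$, the unique common neighbor of $j$ and $l$: here $j$ and $p_1$ \emph{are} adjacent, so the hypothesis applies to the pair $(j,p_1)$ and $l$ is their only common neighbor; your Theorem~\ref{g5}-style collapse $u_{ij}u_{kl}u_{ip_1}=u_{ij}\bigl(\sum_a u_{ka}\bigr)u_{ip_1}=u_{ij}u_{ip_1}=0$ then goes through verbatim. For the remaining neighbors $p\notin\{j,p_1\}$ a genuinely different argument is required: letting $s$ be the unique common neighbor of $i$ and $k$, one inserts $\sum_a u_{sa}$ to get $u_{ij}u_{kl}u_{ip}=u_{ij}u_{sp_1}u_{kl}u_{ip}$ (only $a=p_1$ survives, since $a$ must be a common neighbor of $j$ and $l$), and then kills $u_{sp_1}u_{kl}u_{ip}$ by observing that $j$ is the unique common neighbor of the \emph{edge} $(p_1,l)$, hence $(p_1,p)\notin E$, so $u_{sp_1}u_{ip}=0$ and inserting $\sum_a u_{al}$ (only $a=k$ survives) gives $u_{sp_1}u_{kl}u_{ip}=0$. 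Your proposal is missing this entire second branch. Your derivation of the ``in particular'' statement from $a_1=b_0-b_1-1$ is fine.
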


\begin{proof}
Let $(u_{ij})_{1 \leq i,j \leq n}$ be the generators of $C(\QBan(\Gamma))$ and let $(i,k), (j,l) \in E$. Using Relations \eqref{QA2}, \eqref{QA3} we get 
\begin{align*}
u_{ij} u_{kl} = u_{ij}u_{kl} \left(\sum_{p;(l,p) \in E} u_{ip}\right).
\end{align*}
There is exactly one $p_1\neq j, (l,p_1) \in E$ such that $(p_1, j) \in E$ since adjacent vertices have exactly one neighbor and we have $(j,l) \in E$. In this case we have $(j,a) \notin E$ or $(a,p_1)\notin E$ for $a \neq l$, because $l$ is the only common neighbor of $j$ and $p_1$. We deduce
\begin{align*}
u_{ij} u_{kl} u_{ip_1} = u_{ij}\left( \sum_{a} u_{ka}\right) u_{ip_1} = u_{ij} u_{ip_1} =0
\end{align*}
by Relations \eqref{QA3} and \eqref{QA2}.

Now, let $p \notin\{j, p_1\}$, $(l,p)\in E$ and let $s$ be the only common neighbor of $i$ and $k$. It holds 
\begin{align*}
u_{ij}u_{kl}u_{ip} = u_{ij} \left(\sum_{a} u_{sa}\right) u_{kl} u_{ip} = u_{ij} u_{sp_1}u_{kl}u_{ip}
\end{align*}
by Relations  \eqref{QA2}, \eqref{QA3}, since $p_1$ is the only common neighbor of $j$ and $l$. 
We also know that $j$ is the only common neighbor of $p_1$ and $l$ and since we have $(l,p)\in E$, we deduce $(p_1, p) \notin E$. Relations \eqref{QA2}, \eqref{QA4} now yield
\begin{align*}
0 = u_{sp_1}u_{ip} = u_{sp_1}\left(\sum_{a} u_{al}\right)u_{ip} = u_{sp_1}u_{kl}u_{ip}
\end{align*}
because $k$ is the only common neighbor of $s$ and $i$. Thus we have
\begin{align*}
u_{ij}u_{kl}u_{ip}= u_{ij}u_{sp_1}u_{kl}u_{ip} = 0. 
\end{align*}
Summarising, we get
\begin{align*}
u_{ij} u_{kl} = u_{ij} u_{kl} u_{ij}
\end{align*}
and by Lemma \ref{com} we conclude $u_{ij} u_{kl} = u_{kl} u_{ij} \text{ for } (i,k), (j,l) \in E$.
\end{proof}

The next lemma is technical and mostly used to shorten the upcoming proofs. 

\begin{lem}\label{abk}
Let $\Gamma$ be a finite, undirected graph and let $(u_{ij})_{1 \leq i,j \leq n}$ be the generators of $C(\QBan(\Gamma))$. Let $d(i,k)=d(j,l)=m$. Let $q$ be a vertex with $d(j,q) =s$, $d(q,l)=t$ and $u_{kl}u_{aq}=u_{aq}u_{kl}$ for all $a$ with $d(a,k)=t$. Then 
\begin{align*}
u_{ij}u_{kl} = u_{ij}u_{kl}\sum_{\substack{p; d(l,p)=m,\\ d(p,q)=s}} u_{ip}.
\end{align*}
In particular, if we have $m=2$ and if $\QBan(\Gamma) = \QBic(\Gamma)$ holds, then choosing $s=t=1$ implies
\begin{align*}
u_{ij}u_{kl} = u_{ij}u_{kl}\sum_{\substack{p; d(l,p)=2,\\ (p,q) \in E}} u_{ip}.
\end{align*}
\end{lem}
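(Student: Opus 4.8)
The plan is to expand $u_{ij}u_{kl}$ by inserting the magic-unitary relation $\sum_p u_{ip} = 1$ from \eqref{QA2}, and then to kill, one summand at a time, those terms $u_{ij}u_{kl}u_{ip}$ whose index $p$ violates one of the two constraints $d(l,p)=m$ or $d(p,q)=s$. The only genuine inputs are Lemma \ref{dist} and the commutation hypothesis on $q$; everything else is repeated bookkeeping with distances, so I expect no conceptual surprises, only careful index tracking.

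First I would write $u_{ij}u_{kl} = \sum_p u_{ij}u_{kl}u_{ip}$. The constraint $d(l,p)=m$ is then immediate: in the factor $u_{kl}u_{ip}$ the row pair is $(k,i)$ and the column pair is $(l,p)$, so Lemma \ref{dist} forces $u_{kl}u_{ip}=0$ whenever $d(k,i)=m$ differs from $d(l,p)$. Hence the sum already reduces to those $p$ with $d(l,p)=m$.

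The heart of the argument is to fix such a $p$ with $d(p,q)\neq s$ and show $u_{ij}u_{kl}u_{ip}=0$. Here I would insert $\sum_a u_{aq}=1$ between $u_{ij}$ and $u_{kl}$, obtaining $\sum_a u_{ij}u_{aq}u_{kl}u_{ip}$. Two applications of Lemma \ref{dist} prune this sum: $u_{ij}u_{aq}=0$ unless $d(i,a)=d(j,q)=s$, and $u_{aq}u_{kl}=0$ unless $d(a,k)=d(q,l)=t$, so only $a$ with both $d(i,a)=s$ and $d(a,k)=t$ survive. For precisely these $a$ the commutation hypothesis applies (it is assumed for all $a$ with $d(a,k)=t$), so I may replace $u_{aq}u_{kl}$ by $u_{kl}u_{aq}$ and reach $u_{ij}u_{kl}u_{aq}u_{ip}$. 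Now the trailing factor $u_{aq}u_{ip}$ has row pair $(a,i)$ with $d(a,i)=s$ and column pair $(q,p)$; since $d(q,p)\neq s$, Lemma \ref{dist} gives $u_{aq}u_{ip}=0$. Thus every surviving summand vanishes, so $u_{ij}u_{kl}u_{ip}=0$, leaving only the $p$ with $d(l,p)=m$ and $d(p,q)=s$, which is the claimed identity.

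For the ``in particular'' statement I would specialise to $m=2$ and $s=t=1$. The key observation is that the commutation hypothesis $u_{kl}u_{aq}=u_{aq}u_{kl}$ for all $a$ with $d(a,k)=1$, together with $d(q,l)=1$, is exactly an instance of Bichon's relation \eqref{QA5} (since then $(a,k),(q,l)\in E$); hence under $\QBan(\Gamma)=\QBic(\Gamma)$ it holds automatically, and a vertex $q$ with $d(j,q)=d(q,l)=1$ is simply a common neighbour of $j$ and $l$, which exists because $d(j,l)=2$. Substituting $d(p,q)=s=1$, i.e.\ $(p,q)\in E$, into the general formula then yields the corollary. The only delicate point in the whole proof is to place the inserted identity $\sum_a u_{aq}=1$ so that $d(i,a)=s$ (forced by the left factor) and $d(a,k)=t$ (forced by the right factor) are imposed simultaneously; this is precisely what makes the commutation hypothesis, stated only for $d(a,k)=t$, applicable at the right moment.
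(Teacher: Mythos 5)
Your proposal is correct and follows essentially the same route as the paper: both insert $\sum_a u_{aq}=1$, prune the $a$-sum to $d(i,a)=s$, $d(a,k)=t$ via Lemma \ref{dist}, use the commutation hypothesis to move $u_{aq}$ past $u_{kl}$, and then kill the terms with $d(p,q)\neq s$ by another application of Lemma \ref{dist}. The only difference is organizational — you eliminate the offending summands $u_{ij}u_{kl}u_{ip}$ one at a time, while the paper runs the same manipulations on the full sum in a single chain of equalities — and your reading of the ``in particular'' clause as an instance of Relation \eqref{QA5} matches the intended use.
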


\begin{proof}
Using Lemma \ref{dist} and Relation \eqref{QA2}, we know
\begin{align*}
u_{ij}u_{kl}=u_{ij}u_{kl}\sum_{p;d(l,p)=m} u_{ip}.
\end{align*}
Additionally, we want to obtain
\begin{align*}
u_{ij}u_{kl}=u_{ij}u_{kl}\sum_{\substack{p;d(l,p)=m,\\d(p,q)=s}} u_{ip}.
\end{align*}

 By Relation \eqref{QA2} and Lemma \ref{dist}, we have 
 \begin{align*}
 u_{ij}u_{kl} = u_{ij}\left(\sum_a u_{aq}\right)u_{kl} = u_{ij}\left(\sum_{\substack{a;d(i,a)=s,\\d(k,a)=t}} u_{aq}\right) u_{kl}.
 \end{align*}
Furthermore, also by Relation \eqref{QA2} and Lemma \ref{dist}, it holds \allowdisplaybreaks
 \begin{align*}
 u_{ij}u_{kl} &= u_{ij}\left(\sum_{\substack{a;d(i,a)=s,\\d(k,a)=t}} u_{aq}\right) u_{kl}\\
&= u_{ij}\left(\sum_{\substack{a;d(i,a)=s,\\d(k,a)=t}} u_{aq}\right) u_{kl} \sum_{p} u_{ip} \\
&=u_{ij}\left(\sum_{\substack{a;d(i,a)=s,\\d(k,a)=t}} u_{aq}\right) u_{kl} \sum_{p;d(l,p)=m} u_{ip}.
 \end{align*}
 Since we have $u_{kl}u_{aq}=u_{aq}u_{kl}$ for all $a$ with $d(a,k)=t$ and by Lemma \ref{dist}, we get
 \begin{align*}
  u_{ij}u_{kl} &=u_{ij}\left(\sum_{\substack{a;d(i,a)=s,\\d(k,a)=t}} u_{aq}\right) u_{kl} \sum_{p;d(l,p)=m} u_{ip} \\
  &=u_{ij}u_{kl}\left(\sum_{\substack{a;d(i,a)=s,\\d(k,a)=t}} u_{aq}\right) \sum_{p;d(l,p)=m} u_{ip}\\
  &=u_{ij}u_{kl}\left(\sum_{\substack{a;d(i,a)=s,\\d(k,a)=t}} u_{aq}\right) \sum_{\substack{p;d(l,p)=m,\\d(p,q)=s}} u_{ip}.
 \end{align*}
 Using $u_{kl}u_{aq}=u_{aq}u_{kl}$ for all $a$ with $d(a,k)=t$ again, we obtain
 \begin{align*}
  u_{ij}u_{kl} &=u_{ij}u_{kl}\left(\sum_{\substack{a;d(i,a)=s,\\d(k,a)=t}} u_{aq}\right) \sum_{\substack{p;d(l,p)=m,\\d(p,q)=s}} u_{ip}\\
 &=u_{ij}\left(\sum_{\substack{a;d(i,a)=s,\\d(k,a)=t}} u_{aq}\right)u_{kl} \sum_{\substack{p;d(l,p)=m,\\d(p,q)=s}} u_{ip}.
 \end{align*}
 By Lemma \ref{dist} and Relation \eqref{QA2}, we get
 \begin{align*}
u_{ij}u_{kl} &=u_{ij}\left(\sum_{a} u_{aq}\right)u_{kl} \sum_{\substack{p;d(l,p)=m,\\d(p,q)=s}} u_{ip}\\
  &=u_{ij}u_{kl} \sum_{\substack{p;d(l,p)=m,\\d(p,q)=s}} u_{ip}
 \end{align*}
 and this completes the proof.
\end{proof}

The following result is helpful, if one has a specific labelling of the vertices and it is not to hard to see which vertices are in distance $m$ to the given ones. 

\begin{lem}\label{krit}
Let $\Gamma$ be a finite, undirected graph and let $(u_{ij})_{1 \leq i,j \leq n}$ be the generators of $C(\QBan(\Gamma))$. Let $d(i,k)=d(j,l)=m$ and let $p\neq j$ be a vertex with $d(p,l)=m$. Let $q$ be a vertex with $d(q,l)=s$ and $d(j,q) \neq d(q,p)$. Then
\begin{align*}
u_{ij}\left(\sum_{\substack{t;d(t,j)=d(t,p)=m,\\d(t,q)=s}} u_{kt}\right)u_{ip} =0.
\end{align*}
Especially, if $l$ is the only vertex satisfying $d(l,q)=s$, $d(l,j)=m$ and $d(l,p)=m$, we obtain $u_{ij}u_{kl}u_{ip} = 0$.
\end{lem}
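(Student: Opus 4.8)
The plan is to insert the column resolution of the identity $\sum_a u_{aq}=1$ (Relation \eqref{QA2}) between $u_{kt}$ and $u_{ip}$, and to arrange matters so that the inner sum over $t$ collapses to the full row sum $\sum_t u_{kt}=1$. This replaces the middle block $\sum_{t}u_{kt}$ by a single generator $u_{aq}$, after which the hypothesis $d(j,q)\neq d(q,p)$ forces the resulting word $u_{ij}u_{aq}u_{ip}$ to vanish by Lemma \ref{dist}. The only ingredients are Lemma \ref{dist} and the row/column sum relations \eqref{QA2}.

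First I would enlarge the summation set. Writing $T=\{t: d(t,j)=d(t,p)=m,\ d(t,q)=s\}$ and $\tilde T=\{t: d(t,j)=m,\ d(t,q)=s\}$, we have $T\subseteq\tilde T$, and for $t\in\tilde T\setminus T$ one has $d(t,p)\neq m$, hence $u_{kt}u_{ip}=0$ by Lemma \ref{dist} (since $d(k,i)=m$). Therefore $u_{ij}(\sum_{t\in T}u_{kt})u_{ip}=u_{ij}(\sum_{t\in\tilde T}u_{kt})u_{ip}$. The purpose of this step is that $\tilde T$ is exactly the set of $t$ singled out by the two adjacencies $u_{ij}u_{kt}$ (forcing $d(j,t)=m$) and $u_{kt}u_{aq}$ (forcing $d(t,q)=s$) that survive the upcoming insertion.

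Next I would insert $\sum_a u_{aq}=1$ to obtain $\sum_{t\in\tilde T}\sum_a u_{ij}u_{kt}u_{aq}u_{ip}$ and swap the order of summation. For a fixed $a$ with $d(k,a)=s$, every $t\notin\tilde T$ contributes zero to $\sum_t u_{ij}u_{kt}u_{aq}$: if $d(t,j)\neq m$ then $u_{ij}u_{kt}=0$, while if $d(t,q)\neq s$ then $u_{kt}u_{aq}=0$ because $d(k,a)=s$. Hence the sum over $\tilde T$ extends to the full row sum, giving $u_{ij}(\sum_t u_{kt})u_{aq}u_{ip}=u_{ij}u_{aq}u_{ip}$; for $a$ with $d(k,a)\neq s$ the summand is already zero termwise. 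Finally $u_{ij}u_{aq}u_{ip}=0$ for every $a$, since $u_{ij}u_{aq}$ is nonzero only if $d(i,a)=d(j,q)$ and $u_{aq}u_{ip}$ only if $d(i,a)=d(q,p)$, which is impossible as $d(j,q)\neq d(q,p)$. Summing over $a$ gives the claim, and the ``especially'' statement follows at once by taking $T=\{l\}$.

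The step I expect to be the main obstacle is justifying the extension of the inner $t$-sum to the full row sum: this only works once the index set has been closed up to $\tilde T$, and it is essential to perform the enlargement \emph{before} inserting the $q$-column, because once $u_{kt}$ and $u_{ip}$ are separated by $u_{aq}$ the condition $d(t,p)=m$ is no longer enforced by an adjacency. Getting this order of operations right -- first enlarge using $u_{kt}u_{ip}=0$, then insert and collapse -- is the crux of the argument.
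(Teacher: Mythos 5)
Your proof is correct and follows essentially the same route as the paper: both insert the resolution of the identity $\sum_a u_{aq}=1$ next to the $t$-sum, use Lemma \ref{dist} to complete that sum to the full row sum $\sum_b u_{kb}=1$, and then kill the resulting word $u_{ij}u_{aq}u_{ip}$ via $d(j,q)\neq d(q,p)$. The only (immaterial) difference is that you insert the $q$-column to the right of the $t$-sum and enlarge the index set in two explicit stages, whereas the paper inserts it on the left and handles the extraneous $t$'s in a single case analysis.
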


\begin{proof}
By Relation \eqref{QA2} and Lemma \ref{dist}, it holds
\begin{align*}
u_{ij}\left(\sum_{\substack{t;d(t,j)=d(t,p)=m,\\d(t,q)=s}} u_{kt}\right)u_{ip} = u_{ij}\left(\sum_{\substack{a;d(i,a)=d(j,q),\\d(k,a)=s}} u_{aq}\right)\left(\sum_{\substack{t;d(t,j)=d(t,p)=m,\\d(t,q)=s}} u_{kt}\right)u_{ip}.
\end{align*}

Now, let $b\neq t$, for all $t$ appearing in the above sum. We prove
\begin{align*}
u_{ij}\left(\sum_{\substack{a;d(i,a)=d(j,q),\\d(k,a)=s}} u_{aq}\right)u_{kb}u_{ip} =0.
\end{align*}
Indeed, if $d(b,q) \neq s$ or $d(b,p) \neq m$, then we get
\begin{align*}
u_{ij}\left(\sum_{\substack{a;d(i,a)=d(j,q),\\d(k,a)=s}} u_{aq}\right)u_{kb}u_{ip} =0,
\end{align*}
by Lemma \ref{dist}. On the other hand, if $d(b,q) = s$ and $d(b,p) = m$, then we have $d(b,j) \neq m$ by assumption. This yields
\begin{align*}
u_{ij}\left(\sum_{\substack{a;d(i,a)=d(j,q),\\d(k,a)=s}} u_{aq}\right)u_{kb}u_{ip} =u_{ij}u_{kb}u_{ip} =0,
\end{align*}
also by Relation \eqref{QA2} and Lemma \ref{dist}. Using $\sum_b u_{kb}=1$, we deduce 
\begin{align*}
u_{ij}\left(\sum_{\substack{t;d(t,j)=d(t,p)=m,\\d(t,q)=s}} u_{kt}\right)u_{ip} &= u_{ij}\left(\sum_{\substack{a;d(i,a)=d(j,q),\\d(k,a)=s}} u_{aq}\right)\left(\sum_{\substack{t;d(t,j)=d(t,p)=m,\\d(t,q)=s}} u_{kt}\right)u_{ip}\\ 
&= u_{ij}\left(\sum_{\substack{a;d(i,a)=d(j,q),\\d(k,a)=s}} u_{aq}\right)\left(\sum_{b} u_{kb}\right)u_{ip}\\ 
&=u_{ij}\left(\sum_{\substack{a;d(i,a)=d(j,q),\\d(k,a)=s}} u_{aq}\right)u_{ip}.
\end{align*}
Since we assumed $d(j,q) \neq d(q,p)$, the condition $d(i,a)=d(j,q)$ implies $d(i,a) \neq d(q,p)$. Thus, Lemma \ref{dist} yields $u_{aq}u_{ip} =0$ for all such $a$ and we get
\begin{align*}
u_{ij}u_{kl}u_{ip} =u_{ij}\left(\sum_{\substack{a;d(i,a)=d(j,q),\\d(k,a)=s}} u_{aq}\right)u_{ip}=0.
\end{align*}
\end{proof}

We see that certain values in the intersection array of a graph $\Gamma$ give commutation relations of the generators of $\QBan(\Gamma)$ in the subsequent lemma. Recall Definition \ref{interarray} of the intersection array.

\begin{lem}\label{ia}
Let $\Gamma$ be a distance-regular graph with intersection array \linebreak$\{b_0,b_1,\dots, b_{d-1}; c_1, c_2, \dots, c_d\}$ and let $(u_{ij})_{1 \leq i,j \leq n}$ be the generators of $C(\QBan(\Gamma))$. Let $c_m \geq 2$ for some $m \geq 2$ and assume 
\begin{align*}
u_{ij}u_{kl}=u_{kl}u_{ij}
\end{align*}
for all vertices $i,j,k,l$ with $d(i,k)=d(j,l)=m-1$. If
\begin{itemize}
\item[(a)] $c_2 =1$ and $b_1+1=b_0$,
\item[(b)] $c_2=1$ and $b_1+2=b_0$,
\item[(c)] or $c_2 =2$, $m=2$ and $b_1+3=b_0$,
\end{itemize}
then we have $u_{ij}u_{kl}=u_{kl}u_{ij}$ for all $i,j,k,l$ with $d(i,k)=d(j,k)=m$.
\end{lem}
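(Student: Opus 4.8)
The plan is to fix vertices $i,j,k,l$ with $d(i,k)=d(j,l)=m$ and prove the single identity $u_{ij}u_{kl}=u_{ij}u_{kl}u_{ij}$; by Lemma \ref{com} this already forces $u_{ij}u_{kl}=u_{kl}u_{ij}$, and letting the quadruple range over all configurations at distance $m$ gives the statement. Note that when $d(l,j)=m$, the quantity $c_m$ counts exactly the neighbors of $j$ lying at distance $m-1$ from $l$, so the hypothesis $c_m\geq 2$ lets me choose two \emph{distinct} vertices $q,q'$, each adjacent to $j$ and each satisfying $d(q,l)=d(q',l)=m-1$.

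The engine is Lemma \ref{abk}, applied once with $q$ and once with $q'$. For $q$ I take $s=d(j,q)=1$ and $t=d(q,l)=m-1$. The commutativity requirement of Lemma \ref{abk}, namely $u_{kl}u_{aq}=u_{aq}u_{kl}$ for all $a$ with $d(a,k)=m-1$, holds precisely because $d(q,l)=m-1$: this is an instance of the assumed commutativity of the generators at distance $m-1$ (applied to the quadruple $a,q,k,l$). Hence Lemma \ref{abk} yields
\begin{align*}
u_{ij}u_{kl}=u_{ij}u_{kl}\sum_{\substack{p;\,d(l,p)=m,\\(p,q)\in E}}u_{ip},
\end{align*}
together with the analogous identity having $q'$ in place of $q$. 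For $m=2$ this is exactly the ``in particular'' clause of Lemma \ref{abk}, since commutativity at distance $m-1=1$ means $\QBan(\Gamma)=\QBic(\Gamma)$.

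Writing $A$ and $B$ for the two restricted sums, I would now intersect them algebraically. Since the $(u_{ip})_p$ are pairwise orthogonal projections with $\sum_p u_{ip}=1$, one has $AB=\sum u_{ip}$ taken over those $p$ that are common neighbors of $q$ and $q'$ at distance $m$ from $l$. Multiplying $u_{ij}u_{kl}=u_{ij}u_{kl}A$ on the right by $B$ and substituting $u_{ij}u_{kl}B=u_{ij}u_{kl}$ from the second identity gives $u_{ij}u_{kl}=u_{ij}u_{kl}\,AB$, so everything reduces to identifying the common neighbors of $q$ and $q'$ at distance $m$ from $l$.

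It then remains to show that $j$ is the \emph{only} such vertex, and this is where the three numerical conditions enter, via the edge common-neighbor count $a_1=b_0-b_1-1$. In case (a), $b_1+1=b_0$ gives $a_1=0$, so the neighbors $q,q'$ of $j$ cannot be adjacent, hence $d(q,q')=2$, and $c_2=1$ leaves $j$ as their unique common neighbor. In case (b), $b_1+2=b_0$ gives $a_1=1$, so whether $q\sim q'$ or $d(q,q')=2$ (using $a_1=1$ in the first situation and $c_2=1$ in the second), the unique common neighbor is again $j$. In case (c) with $m=2$, both $q,q'$ are common neighbors of $j$ \emph{and} $l$, so $j$ and $l$ are both common neighbors of $q,q'$; since $b_1+3=b_0$ gives $a_1=2$ and $c_2=2$, the pair $q,q'$ has exactly the two common neighbors $j,l$, and $l$ is discarded as $d(l,l)=0\neq m$. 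In each case $AB=u_{ij}$, whence $u_{ij}u_{kl}=u_{ij}u_{kl}u_{ij}$ and Lemma \ref{com} finishes the argument. I expect the main obstacle to be purely bookkeeping: correctly translating each relation $b_1+r=b_0$ into $a_1=r-1$, pairing it with the value of $c_2$ to pin down the common neighbors of $q$ and $q'$, and justifying the orthogonality step that legitimately intersects the two restricted sums; the rest is routine.
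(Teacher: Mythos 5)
Your proposal is correct and follows essentially the same route as the paper's proof: both use $c_m\geq 2$ to pick two neighbors of $j$ at distance $m-1$ from $l$, apply Lemma \ref{abk} twice, intersect the two restricted sums via orthogonality of the $u_{ip}$, and then use the numerical hypotheses (your $a_1=b_0-b_1-1$ bookkeeping matches the paper's case analysis exactly, including discarding $l$ in case (c)) to conclude that $j$ is the unique surviving index, so that $u_{ij}u_{kl}=u_{ij}u_{kl}u_{ij}$ and Lemma \ref{com} applies.
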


\begin{proof}
Let $d(i,k) = d(j,l) = m$. Since $c_m \geq 2$, there are two neighbors $t, \tau$ of $j$ in distance $m-1$ to $l$. Since we have $u_{ac}u_{bd} = u_{bd}u_{ac}$ for $d(a,b) = d(c,d) = m-1$ by assumption, we get 
\begin{align*}
u_{ij} u_{kl}= u_{ij}u_{kl}\left(\sum_{\substack{p; d(p,l)=m,\\(t,p) \in E}} u_{ip} \right) \quad \text{ and } \quad u_{ij}u_{kl} = u_{ij}u_{kl}\left(\sum_{\substack{p; d(p,l)=m,\\(\tau,p) \in E}} u_{ip} \right)
\end{align*}
by Lemma \ref{abk}.
We deduce\allowdisplaybreaks
\begin{align*}
u_{ij}u_{kl} &= u_{ij}u_{kl}\left(\sum_{\substack{p; d(p,l)=m,\\(t,p) \in E}} u_{ip} \right)\\
&= u_{ij}u_{kl}\left(\sum_{\substack{p; d(p,l)=m,\\(\tau,p) \in E}} u_{ip} \right)\left(\sum_{\substack{p; d(p,l)=m,\\(t,p) \in E}} u_{ip} \right)\\
&=u_{ij}u_{kl}\left(\sum_{\substack{p; d(p,l)=m,\\(\tau,p) \in E,(t,p) \in E}} u_{ip} \right).
\end{align*}

In case (a), we know from $b_1+1=b_0$ that $\Gamma$ does not contain a triangle. Therefore we have $d(t,\tau) =2$, since they have a common neighbor $j$ and they are not connected, because otherwise there would be a triangle in $\Gamma$. Then $c_2=1$ implies that $j$ is the only common neighbor of $t$ and $\tau$. Thus only $j$ satisfies $d(j,l)=m$, $(\tau,j)\in E$, $(t,j) \in E$. 

In case (b), we either have $(t, \tau) \in E$ or $d(t,\tau)=2 $. If $(t, \tau) \in E$, then $b_1+2=b_0$ implies that $j$ is the only common neighbor of $t$ and $\tau$. If $d(t,\tau)=2$, we get that $j$ is the only common neighbor of $t$ and $\tau$ because $c_2=1$. 

In case (c), we get that $j$ and $l$ are the only common neighbors of $t$ and $\tau$ by similar considerations as in case (b). Thus, $j$ is the only vertex satisfying the above conditions. 

Summarizing, in all three cases
\begin{align*}
u_{ij}u_{kl}=u_{ij}u_{kl}\left(\sum_{\substack{p; d(p,l)=m,\\(\tau,p) \in E,(t,p) \in E}} u_{ip} \right) = u_{ij}u_{kl}u_{ij}
\end{align*}
and then Lemma \ref{com} completes the proof. 
\end{proof}

\section{Families of distance-transitive graphs}\label{families}

In this section we deal with families of distance-transitive graphs. The considered families are well-known and can for example be found in \cite{BCN}. The strategy of proving that a graph has no quantum symmetry is the following.

\begin{itemize}
\item[$(1)$] By Lemma \ref{dist}, we know that it suffices to show $u_{ij}u_{kl}=u_{kl}u_{ij}$ for $d(i,k)=d(j,l)$. 
\item[$(2)$] Choose a distance $d(i,k)=d(j,l)=m$ (usually, one starts with $m=1$, then $m=2$ and so on). 
\item[$(3)$] First check if Theorem \ref{g5}, Lemma \ref{oneneighbor} or Lemma \ref{ia} applies. If this is the case, then we know $u_{ij}u_{kl}=u_{kl}u_{ij}$ for $d(i,k)=d(j,l)=m$. 
\item[$(4)$] Otherwise, using Lemma \ref{disttrans}, we know that it is enough to show $u_{ij_1}u_{kl_1}=u_{kl_1}u_{ij_1}$ for one pair $(j_1,l_1)$ and all $(i,k)$ with $d(i,k)=m$. 
\item[$(5)$] If we know the neighbors of $l_1$ (for example because of a known construction of the graph), one can apply Lemma \ref{krit} and use the equations to deduce $u_{ij}u_{kl}=u_{kl}u_{ij}$ for $d(i,k)=d(j,l)=m$. 
\item[$(6)$] If this does not work, we have to treat this distance in the graph as a special case and try to get $u_{ij}u_{kl}=u_{kl}u_{ij}$ by other methods.
\end{itemize}

\subsection{The odd graphs}\label{odd}
We show that the odd graphs do not have quantum symmetry. Here we use that strategy as described above.  

\begin{defn}
Let $k \geq 2$. The graph $O_k$ with vertices corresponding to $(k-1)$-subsets of $\{1, \dots, 2k-1\}$, where two vertices are connected if and only if the corresponding subsets are disjoint is called \emph{odd graph}. 
\end{defn}

The odd graphs have the following properties, see for example \cite[Proposition 9.1.7]{BCN}. 

\begin{rem}
Odd graphs are distance-transitive with $\Aut(O_k)=S_{2k-1}$, diameter $k-1$ and girth $g(O_k)\geq 5$ for $k \geq 3$. They have the intersection array 
\begin{align*}
&\{k,k-1,k-1 \dots, l+1,l+1,l;1,1,2,2,\dots, l,l\} &&\text{ for } k =2l-1,\\
&\{k,k-1,k-1,\dots, l+1,l+1;1,1,2,2,\dots, l-1,l-1,l\} &&\text{ for } k=2l.
\end{align*} 
\end{rem}

\begin{ex}
The graph $O_2$ is the triangle and $O_3$ is the Petersen graph. We know that those graphs have no quantum symmetry by \cite{WanSn} and \cite{QAutPetersen}, respectively.
\end{ex}

We are now ready to prove Theorem \ref{thm1.1}.

\begin{thm}
The odd graphs have no quantum symmetry.
\end{thm}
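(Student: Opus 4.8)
The plan is to follow the six-step strategy laid out before Subsection \ref{odd}, using Lemma \ref{dist} to reduce to showing $u_{ij}u_{kl}=u_{kl}u_{ij}$ whenever $d(i,k)=d(j,l)=m$, and then inducting on the distance $m$ from $1$ up to the diameter $k-1$. For $O_2$ (the triangle) and $O_3$ (the Petersen graph) the result is already known, so I would assume $k\geq 4$ throughout the induction and handle those base cases by citation.

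For the distance $m=1$ step, I would invoke Theorem \ref{g5}: since $g(O_k)\geq 5$ for $k\geq 3$, we immediately get $\QBan(O_k)=\QBic(O_k)$, i.e.\ Relation \eqref{QA5} holds, which is exactly commutativity of $u_{ij}$ and $u_{kl}$ for $(i,k),(j,l)\in E$. This disposes of the $m=1$ case without any combinatorial work. The engine for the inductive step is Lemma \ref{ia}: the odd graph contains no triangle (so case (a) or (b) of that lemma is the relevant regime), and the intersection arrays given in the preceding remark show that the parameters $c_m, b_0, b_1$ fall into the pattern required. Concretely, I would check which of the hypotheses $c_2=1$ with $b_1+1=b_0$ or $b_1+2=b_0$ is met by reading off $b_0=k$, $b_1=k-1$, and $c_2=1$ from the array; since $b_1+1=b_0$ holds and $c_2=1$, case (a) applies. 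Then, assuming inductively that commutativity holds at distance $m-1$, Lemma \ref{ia} promotes it to distance $m$ for every $m\geq 2$ with $c_m\geq 2$.

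The remaining obstacle is the distances $m$ at which $c_m=1$, since Lemma \ref{ia} requires $c_m\geq 2$ and so does not cover them. Reading the intersection arrays, the entries of the $c$-sequence are $1,1,2,2,\dots$, so the troublesome indices are exactly $m=1$ (already handled by Theorem \ref{g5}) and $m=2$ (where $c_2=1$). I would treat $m=2$ as a special case in the spirit of step (6): using Lemma \ref{disttrans} I fix a convenient pair $(j_1,l_1)$ at distance $2$, identify the neighbors of $l_1$ from the explicit subset description of $O_k$, and apply Lemma \ref{krit} (or Lemma \ref{abk}) to force $u_{ij}u_{kl}u_{ip}=0$ for the relevant off-diagonal terms, thereby reducing $u_{ij}u_{kl}$ to $u_{ij}u_{kl}u_{ij}$ and concluding via Lemma \ref{com}. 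The genuine difficulty lies here: I expect the bulk of the argument to be the distance-two analysis, where one must carefully count the common neighbors of two vertices whose defining $(k-1)$-subsets meet in a prescribed number of elements, and verify that the combinatorial configuration leaves $l$ as essentially the only vertex satisfying the simultaneous distance constraints needed for Lemma \ref{krit} to bite.

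Once distance $2$ is settled, the induction runs unobstructed: every larger distance $m$ either has $c_m\geq 2$ and is handled by Lemma \ref{ia} case (a), so by induction $u_{ij}u_{kl}=u_{kl}u_{ij}$ for all $i,j,k,l$ with $d(i,k)=d(j,l)=m$ up to the diameter $k-1$. Combining all distances, $C(\QBan(O_k))$ is commutative, which is precisely the statement that $O_k$ has no quantum symmetry.
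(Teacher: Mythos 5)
Your overall architecture coincides with the paper's: Theorem \ref{g5} handles distance one via the girth bound, distances $m\geq 3$ are promoted inductively by Lemma \ref{ia}~(a) using $c_2=1$, $b_1+1=b_0$ and $c_m\geq 2$ for $m\geq 3$, and the only genuinely new work sits at distance two. But the distance-two case is exactly where your proposal stops being a proof and becomes a hope, and the hope as stated does not come true. After reducing via Lemma \ref{disttrans} to $j=\{1,\dots,k-1\}$, $l=\{1,\dots,k-2,k\}$ and applying Lemma \ref{abk} with their unique common neighbor $a=\{k+1,\dots,2k-1\}$, one has to kill each term $u_{ij}u_{kl}u_{id_s}$ for $d_s=\{1,\dots,k\}\setminus\{s\}$, $s\in\{1,\dots,k-2\}$. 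Your plan is that Lemma \ref{krit} ``bites,'' i.e.\ that a suitable auxiliary vertex leaves $l$ as the \emph{only} vertex satisfying the simultaneous distance constraints. It does not: for the auxiliary vertex $t=\{1,\dots,k-2,k+1\}$ (chosen so that $d(t,j)=2$ but $d(t,d_s)\neq 2$), there are exactly \emph{two} vertices at distance two to $j$, $d_s$ and $t$, namely $l$ and $r_s=\{1,\dots,k-1,k+1\}\setminus\{s\}$, so Lemma \ref{krit} only yields the vanishing of the sum $u_{ij}(u_{kl}+u_{kr_s})u_{id_s}=0$, not of the individual term you need.

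The missing idea is how to split that two-term sum. The paper does it by left-multiplying by $u_{ca}$, where $c$ is the unique common neighbor of $i$ and $k$ (unique because $g(O_k)\geq 5$) and $a$ is the unique common neighbor of $j$ and $l$: Relation \eqref{QA5} lets $u_{ca}$ pass through $u_{ij}$; since $a$ meets $r_s$ in the element $k+1$ we have $(a,r_s)\notin E$ while $(c,k)\in E$, so Relation \eqref{QA3} annihilates the $u_{kr_s}$ term, leaving $u_{ij}u_{ca}u_{kl}u_{id_s}=0$; and finally $u_{ij}u_{ca}u_{kl}=u_{ij}u_{kl}$ because $a$ is the only common neighbor of $j$ and $l$, by Relations \eqref{QA2} and \eqref{QA3}. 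Without some such mechanism your distance-two step does not close, and since everything else in your outline is routine bookkeeping, this splitting argument is essentially the entire content of the theorem. The rest of your plan (base cases by citation, girth for $m=1$, Lemma \ref{ia}~(a) iterated $(k-3)$ times for $3\leq m\leq k-1$, Lemma \ref{dist} for unequal distances) is correct and matches the paper.
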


\begin{proof}
Since we know that $O_2$ has no quantum symmetry, we can assume $k\geq 3$. Then we know that $O_k$ has girth $g(O_k) \geq 5$ and thus we get $\QBan(O_k)=\QBic(O_k)$ by Theorem \ref{g5}, i.e. Relation \eqref{QA5} holds. 

Take $d(i,p) = d(j,q)=2$. We want to show $u_{ij}u_{pq}=u_{pq}u_{ij}$. Since $O_k$ is distance-transitive, it is enough to show $u_{ij}u_{pq}=u_{pq}u_{ij}$ for $j=\{1,\dots, k-1\}$, $q=\{1,\dots, k-2, k\}$ by Lemma \ref{disttrans}.\\

\noindent\emph{Step 1: It holds $u_{ij}u_{pq}=u_{ij}u_{pq}\sum\limits_{\substack{s=1,\\s\neq k-1}}^k u_{id_s}$, where $d_s$ is defined as $\{1, \dots, k\}\backslash \{s\}$.}\\
The only common neighbor of $j$ and $q$ is $a=\{k+1,\dots, 2k-1\}$. Since $\QBan(O_k)=\QBic(O_k)$, we get
\begin{align*}
u_{ij}u_{pq} =u_{ij}u_{pq}\sum_{\substack{b;d(b,q)=2,\\(a,b) \in E}} u_{ib}
\end{align*}
by Lemma \ref{abk}. Neighbors of $a=\{k+1,\dots, 2k-1\}$ are $d_s=\{1, \dots, k\}\backslash \{s\}$, where $s=1,\dots, k$. Those are in distance two to $q$ if $s\neq {k-1}$ and we have $d_k=j$. Thus
\begin{align*}
u_{ij}u_{pq} =u_{ij}u_{pq}\sum_{\substack{b;d(b,q)=2,\\(a,b) \in E}} u_{ib}=u_{ij}u_{pq}\sum_{\substack{s=1,\\s\neq k-1}}^k u_{id_s}.
\end{align*}

\noindent\emph{Step 2: It holds $u_{ij}u_{pq}u_{id_s}=0$ for all $s \in \{1,\dots, k-2\}$.}\\
Take $d_s$ with $s \in \{1,\dots, k-2\}$. Let $t =\{1,\dots ,k-2, k+1\}$. We get that $d(j,t)=2$ and $d(q,t)=2$ since they have the common neighbor $\{k,k+2,\dots, 2k-1\}$ and $\{k-1, k+2, \dots 2k-1\}$, respectively. Because $t \cup d_s = \{1, \dots, k+1\}$, we see that there is no $(k-1)$-subset of $\{1,\dots, 2k-1\}$ disjoint to $t$ and $d_s$ and we deduce $d(t,d_s) \neq 2$. Furthermore, we get that $q$ and $r_s= \{1, \dots, k-1, k+1\}\backslash \{s\}$ are the only vertices in distance two to $j,d_s$ and $t$. This holds since the only $(k-1)$-subsets of $\{1,\dots, 2k-1\}$ that have $k-2$ elements in common with $\{1,\dots, k-1\}$, $\{1,\dots, k\}\backslash\{s\}$, $s\neq k-1,k$ and $\{1,\dots ,k-2, k+1\}$ are $\{1,\dots, k-2, k\}$ and  $\{1, \dots, k-1, k+1\}\backslash \{s\}$. Now, Lemma \ref{krit} yields 
\begin{align}\label{eq1}
u_{ij}(u_{pq} + u_{pr_s})u_{id_s} =0.
\end{align}
Since we have $g(O_k) \geq 5$, $i$ and $p$ have exactly one common neighbor which we denote by $c$. Recall that $a$ is the only common neighbor of $j$ and $q$. Using  Equation \eqref{eq1}, we get
\begin{align*}
u_{ca}u_{ij}(u_{pq} + u_{pr_s})u_{id_s} =0
\end{align*}
and because of Relation $\eqref{QA5}$, we obtain
\begin{align*}
u_{ij}u_{ca}(u_{pq} + u_{pr_s})u_{id_s} =0.
\end{align*}
Since the sets $\{k+1,\dots, 2k-1\}$ and $\{1, \dots k-1, k+1\}\backslash \{s\}$ are not disjoint, we have $(a,r_s) \notin E$. But we know $(c,p) \in E$ by the choice of $c$, thus we get $u_{ij}u_{ca}u_{pr_s}u_{id_s}=0$ by Relation \eqref{QA3}. This yields 
\begin{align}\label{eq2}
u_{ij}u_{ca}u_{pq}u_{id_s} =0.
\end{align}
The vertex $a$ is the only common neighbor of $j$ and $p$, therefore it holds
\begin{align*}
u_{ij}u_{ca}u_{pq} = u_{ij}\left(\sum_{e} u_{ce}\right)u_{pq}=u_{ij}u_{pq} 
\end{align*}
by Relations \eqref{QA2} and \eqref{QA3}. We deduce 
\begin{align*}
u_{ij}u_{pq}u_{id_s}=u_{ij}u_{ca}u_{pq}u_{id_s} =0
\end{align*}
from Equation \eqref{eq2}.\\

\noindent\emph{Step 3: It holds $u_{ij}u_{pq}=u_{pq}u_{ij}$.}\\
Recall that $d_k =j$. From previous steps, we get 
\begin{align*}
u_{ij}u_{pq}= u_{ij}u_{pq}\sum_{\substack{s=1, \\ s \neq k-1}}^k u_{id_s} = u_{ij}u_{pq}u_{ij}
\end{align*}
 and Lemma \ref{com} yields $u_{ij}u_{pq}=u_{pq}u_{ij}$.

The odd graph $O_k$ has diameter $k-1$. Taking this and Lemma \ref{dist} into account, it remains to show $u_{ij}u_{pq}=u_{pq}u_{ij}$ for $3 \leq d(i,p)=d(j,q)\leq k-1$. We have $c_2=1$, $b_1+1=b_0$ and $c_d\geq 2$ for all $d\geq3$ in the intersection array of $O_k$ and thus we obtain the desired equations by using Lemma \ref{ia} (a) $(k-3)$-times. 
\end{proof}

\subsection{Hamming graphs}\label{Hamming}
In this subsection, we give a precise description for which values $d,q \in \N$ the Hamming graph $H(d,q)$ has quantum symmetry and for which it does not. 

\begin{defn}
Let $S=\{1,\dots, q\}$ for $q \in \N$ and let $d\in \N$. The \emph{Hamming graph} $H(d,q)$ is the graph with vertex set $S^d$, where vertices are adjacent if they differ in exactly one coordinate.
\end{defn}

We state some properties of the Hamming graphs in the following remark, see for example \cite[Theorem 9.2.1]{BCN} and \cite[Subsection 12.4.1]{spectra}.
\begin{rem}
The Hamming graphs are distance-transitive and we have $H(d,q) = K_q^{\square d}$, where $\square$ denotes the Cartesian product of graphs. Here, for two graphs $X=(V_1,E_1)$, $Y=(V_2,E_2)$, the Cartesian product $X\square Y$ is the graph with vertex set $V_1 \times V_2$, where two vertices $(u_1,u_2)$, $(v_1,v_2)$ are connected if and only if either $u_1=v_1$ and $(u_2,v_2)\in E_2$ or $(u_1,v_1)\in E_1$ and $u_2=v_2$.
\end{rem}

Hamming graphs include the following families of graphs.
\vspace{0.2cm}
\begin{ex}~
\begin{itemize}
\item[(i)]The Hamming graphs $H(d,1)$ are the complete graphs $K_d$. 
\item[(ii)] For $q=2$, we obtain the hypercube graphs.
\item[(iii)] The Hamming graphs $H(2,q)$ are the $q \times q$ rook's graphs. 
\end{itemize}
\end{ex}

The following proposition is an easy consequence of \cite[Proposition 4.1]{BanBic}.

\begin{prop}\label{Hamm}
Let $q \geq 4$, $d \in \N$ or $q=2$, $d \geq 2$. Then $H(d,q)$ has quantum symmetry. 
\end{prop}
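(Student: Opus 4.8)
The statement asserts that $H(d,q)$ has quantum symmetry precisely when $q\geq 4$ (any $d$) or $q=2,d\geq 2$. Since the proposition is advertised as an easy consequence of \cite[Proposition 4.1]{BanBic}, the plan is to reduce everything to that cited result together with the Cartesian-product structure $H(d,q)=K_q^{\square d}$ recorded in the preceding remark. First I would recall what \cite[Proposition 4.1]{BanBic} gives: it should say that the quantum automorphism group of a Cartesian product of graphs contains the (free wreath or free product type) combination of the quantum automorphism groups of the factors, so that having quantum symmetry in a single factor propagates to the product. Concretely, $\QBan(X\square Y)$ contains $\QBan(X)\times\QBan(Y)$ (permuting within factors), so noncommutativity coming from one factor survives in the product.

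\textbf{Key steps.}
I would organize the argument by cases matching the two hypotheses. For the case $q\geq 4$, the single factor $K_q$ already has quantum symmetry: the quantum automorphism group of the complete graph $K_q$ is Wang's quantum symmetric group $S_q^+$, and $S_q^+$ is noncommutative exactly when $q\geq 4$ (for $q\leq 3$ one has $S_q^+=S_q$). Thus $\QBan(K_q)=S_q^+$ is a genuine quantum group, and via \cite[Proposition 4.1]{BanBic} this noncommutativity embeds into $C(\QBan(H(d,q)))$, forcing the latter algebra to be noncommutative; hence $H(d,q)$ has quantum symmetry for every $d\in\N$. For the case $q=2$, $d\geq 2$, the single factor $K_2$ has no quantum symmetry, so I cannot argue factor by factor in the same way. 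Instead I would invoke the part of \cite[Proposition 4.1]{BanBic} handling a product of at least two isomorphic factors: $H(d,2)=K_2^{\square d}$ is the $d$-cube $Q_d$, and for $d\geq 2$ the product of two or more copies introduces extra quantum symmetry (of hyperoctahedral type) beyond the individual factors. This matches the known fact that $Q_3$ and $Q_4$ have quantum symmetry (see the tables, where $\QBan(Q_3)=S_4^+\times\Z_2$ and $\QBan(Q_4)=O_4^{-1}$), so the cited proposition should supply the noncommutativity directly for all $d\geq 2$.

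\textbf{Main obstacle.}
The delicate point is the $q=2$ case, since here one genuinely needs the \emph{product} structure rather than a single factor to produce quantum symmetry; the argument must quote \cite[Proposition 4.1]{BanBic} in the precise form that guarantees a Cartesian power of at least two copies of $K_2$ carries noncommutative quantum symmetry. I would therefore make sure the statement of \cite[Proposition 4.1]{BanBic} as used is the version giving a lower bound on $\QBan$ of a product (an inclusion of quantum groups), and then simply verify that in both hypothesized regimes the resulting included quantum group is noncommutative. Once that inclusion and the noncommutativity of either $S_q^+$ (for $q\geq 4$) or of the hyperoctahedral-type symmetry of $Q_d$ (for $q=2$, $d\geq 2$) are in place, the conclusion that $C(\QBan(H(d,q)))$ is noncommutative, i.e.\ that $H(d,q)$ has quantum symmetry, is immediate.
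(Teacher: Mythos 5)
Your proposal matches the paper's proof in essence: the paper likewise invokes \cite[Proposition 4.1]{BanBic} to obtain a surjective $*$-homomorphism $C(\QBan(H(d,q)))\to C(S_q^+)\otimes C(\QBan(K_q^{\square (d-1)}))$ for $q\geq 4$ (noncommutative since $S_q^+$ is), and onto $C(H_2^+)\otimes C(\QBan(K_2^{\square (d-2)}))$ for $q=2$, $d\geq 2$ (noncommutative since the hyperoctahedral quantum group $H_2^+$ is), exactly the two cases and mechanisms you describe. The only cosmetic difference is that you phrase the quantum-subgroup relation as an inclusion of quantum groups rather than a surjection of the underlying $C^*$-algebras; these are equivalent and yield the same conclusion.
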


\begin{proof}
Let $q \geq 4$, $d \in \N$. We know $H(d,q) = K_q^{\square d}$ and by Proposition $4.1$ of \cite{BanBic}, we have a surjective *-homomorphism $\phi: C(\QBan(H(d,q)) \to C(S_q^+) \otimes C(\QBan(K_q^{\square d-1}))$. Thus, if $q \geq 4$, this yields that $C(\QBan(H(d,q)))$ is non-commutative.  

Let $q=2$, $d \geq 2$. We get a surjective *-homomorphism $\phi: C(\QBan(H(d,2)) \to C(H_2^+) \otimes C(\QBan(K_2^{\square d-2}))$ by Proposition $4.1$ of \cite{BanBic}, thus $C(\QBan(H(d,2))$ is non-commutative. 
\end{proof}

The following theorem completes the proof of Theorem \ref{thm1.2}.

\begin{thm}
The Hamming graphs $H(d,3)$ do not have quantum symmetry, for $d \in \N$.
\end{thm}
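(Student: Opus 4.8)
The strategy is to follow the six-step program laid out at the start of Section~\ref{families}, using the fact that $H(d,3)=K_3^{\square d}$ is distance-transitive with diameter $d$ and intersection array $(\star^5) = \{2d, 2d-2, \dots, 2; 1, 2, \dots, d\}$, so that $b_0 = 2d$, $b_1 = 2d-2$ and $c_m = m$ for $1 \leq m \leq d$. By Lemma~\ref{dist} it suffices to prove $u_{ij}u_{kl} = u_{kl}u_{ij}$ whenever $d(i,k) = d(j,l) = m$, and I will induct on $m$. The base case $m=1$ is the genuine work; the higher distances will be handled uniformly by Lemma~\ref{ia}.

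\textbf{The case $m=1$.} Here two vertices of $H(d,3)$ are adjacent exactly when their coordinate strings differ in a single position. Adjacent vertices $v, w$ differing in coordinate $r$ have exactly one common neighbor, namely the string agreeing with both outside $r$ and taking the third available value of $\{1,2,3\}$ in position $r$; so adjacent vertices have exactly one common neighbor. Hence Lemma~\ref{oneneighbor} applies directly and yields $\QBan(H(d,3)) = \QBic(H(d,3))$, which is precisely the statement $u_{ij}u_{kl} = u_{kl}u_{ij}$ for all edges $(i,k), (j,l) \in E$, settling $m=1$. (Equivalently, one checks $b_0 - b_1 = 2$, which is the intersection-array hypothesis in the ``in particular'' clause of Lemma~\ref{oneneighbor}.)

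\textbf{The inductive step $2 \leq m \leq d$.} Assuming commutativity at distance $m-1$, I want to invoke Lemma~\ref{ia}. Since $c_m = m \geq 2$ for $m \geq 2$, the standing hypothesis $c_m \geq 2$ holds. The obstruction is that none of the three alternatives (a), (b), (c) of Lemma~\ref{ia} is literally satisfied: we have $c_2 = 2$ but $b_1 + 3 = 2d - 2 + 3 = 2d+1 \neq b_0 = 2d$, so case (c) fails, and $c_2 = 2 \neq 1$ rules out (a) and (b). I therefore expect the Hamming case cannot be pushed through by Lemma~\ref{ia} as stated, and this is the main obstacle: the triangles coming from the complete-graph factors $K_3$ make the local combinatorics around a pair of vertices at distance $m$ richer than the three model situations covered by that lemma.

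\textbf{Overcoming the obstacle.} The remedy I would pursue is to run the Lemma~\ref{abk}/Lemma~\ref{krit} machinery by hand in the product coordinates. Fix $j, l$ with $d(j,l) = m$; by distance-transitivity (Lemma~\ref{disttrans}) I may take $j = (1,\dots,1)$ and $l = (2,\dots,2,1,\dots,1)$ differing in exactly $m$ coordinates. For a vertex $t$ adjacent to $j$ and at distance $m-1$ from $l$ (there are $c_m = m$ such $t$, obtained by switching $j$ to agree with $l$ in one of the $m$ differing coordinates), Lemma~\ref{abk} together with the inductive hypothesis gives
\begin{align*}
u_{ij}u_{kl} = u_{ij}u_{kl}\Bigl(\sum_{\substack{p;\, d(p,l)=m,\\ (t,p)\in E}} u_{ip}\Bigr).
\end{align*}
Intersecting these relations over two well-chosen neighbors $t, \tau$ of $j$ and then over a suitably chosen third vertex $q$, I would apply Lemma~\ref{krit} to eliminate the unwanted summands $p \neq j$, exploiting that in the Hamming metric the common refinement of the distance-$m$ conditions forces $p = j$. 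This reduces $u_{ij}u_{kl}$ to $u_{ij}u_{kl}u_{ij}$, whereupon Lemma~\ref{com} delivers commutativity at distance $m$ and closes the induction. The delicate point, and where most of the routine verification lives, is checking that the explicit distance computations in $S^d$ single out $j$ uniquely, i.e. that the hypotheses of Lemma~\ref{krit} are met for the chosen auxiliary vertex $q$.
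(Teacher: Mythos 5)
Your proposal follows essentially the same route as the paper: Lemma \ref{oneneighbor} (via $b_0 = b_1 + 2$, i.e.\ adjacent vertices have a unique common neighbor) settles $m=1$, and for $m \geq 2$ one inducts using Lemma \ref{abk} with neighbors of $j$ at distance $m-1$ from $l$, after correctly observing that none of the numerical cases of Lemma \ref{ia} applies. The one thing you defer --- ``checking that the explicit distance computations in $S^d$ single out $j$ uniquely'' --- is precisely the content of the paper's argument, and it does go through, more cleanly than you anticipate: with $j = s = (1,\dots,1)$ and $l = t^{(m)}$ having its first $m$ coordinates equal to $2$, take $t = p_1 = (2,1,\dots,1)$ and $\tau = p_2 = (1,2,1,\dots,1)$. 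These two vertices have exactly two common neighbors in $H(d,3)$, namely $s$ and $(2,2,1,\dots,1)$, and the latter is at distance $m-2$ from $t^{(m)}$, so intersecting the two Lemma \ref{abk} sums already leaves only the summand $u_{ij}$. In particular no third auxiliary vertex and no appeal to Lemma \ref{krit} are needed; the hedge in your last paragraph can be dropped. One small point of care your sketch glosses over: for $m \geq 3$ the hypothesis of Lemma \ref{abk} for $q = p_1, p_2$ requires commutativity of $u_{kl}$ with $u_{a p_i}$ for all $a$ at distance $d(l,p_i) = m-1$ from $k$, which is exactly your inductive hypothesis --- so the induction is genuinely needed and is set up correctly, matching the paper's ``repeat this argument $(d-3)$ times.''
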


\begin{proof}
Let $i$,$j$ be adjacent vertices. Thus they differ in exactly one coordinate $i_s \neq j_s$. Since we have $q=3$, this means that there is only one vertex that differs in exactly one coordinate to $i$ and $j$, namely $k$ with $k_a=i_a=j_a$ for all $a \neq s$ and $k_s \neq i_s$, $k_s \neq j_s$. Therefore, adjacent vertices have exactly one neighbor and we get $\QBan(H(d,3))=\QBic(H(d,3))$ by Lemma \ref{oneneighbor}. Hence Relation $\eqref{QA5}$ holds.

The Hamming graph $H(d,3)$ has diameter $d$. Using Lemma \ref{dist}, it remains to show $u_{ij}u_{kl} = u_{kl}u_{ij}$ for all $i,j,k,l$ with $2 \leq d(i,k)=d(j,l) \leq d$ to obtain $\QBan(H(d,3))= \Aut(H(d,3))$. For this, consider $s=(1,\dots, 1)$, $t^{(m)}=(t^{(m)}_1, \dots t^{(m)}_d)$, where $t^{(m)}_{1}=\dots=t^{(m)}_{m}=2$, $t^{(m)}_{m+1}=\dots =t^{(m)}_{d}=1$ for $2\leq m \leq d$, $p_1 =(2,1,\dots, 1)$ and $p_2=(1,2,1,\dots,1)$.\\

\noindent\emph{Step 1: The only common neighbor of $p_1$ and $p_2$ in distance $m$ to $t^{(m)}$ is $s$.}\\
The only common neighbors of $p_1$ and $p_2$ are $s$ and $(2,2,1,\dots, 1)$. We obtain that $s$ is the only common neighbor of $p_1, p_2$ in distance $m$ to $t^{(m)}$, since $d(t,(2,2,1, \dots, 1))=m-2$.\\

\noindent\emph{Step 2: We have $u_{ij}u_{kl}=u_{kl}u_{ij}$ for $d(i,k)=d(j,l)=2$.}\\
Let $d(i,k)=d(j,l)=2$. By Lemma \ref{disttrans}, it is enough to consider $j=s$ and $l=t^{(2)}$. Since we know $(j,p_1), (j,p_2) \in E$, $(l,p_1),(l,p_2) \in E$ and have $\QBan(H(d,3))=\QBic(H(d,3))$, we get
\begin{align*}
u_{ij}u_{kl}= u_{ij}u_{kl}\sum_{\substack{q;d(q,l)=2,\\ \hphantom{q;}(q,p_1)\in E}}u_{iq}
\quad \text{  and  } \quad 
u_{ij}u_{kl}= u_{ij}u_{kl}\sum_{\substack{q;d(q,l)=2,\\ \hphantom{q;}(q,p_2)\in E}}u_{iq}
\end{align*}
by Lemma \ref{abk}. We deduce
\begin{align*}
u_{ij}u_{kl}= u_{ij}u_{kl}\sum_{\substack{q;d(q,l)=2,\\ \hphantom{q;}(q,p_1)\in E, (q,p_2)\in E}}u_{iq}.
\end{align*}
 By \emph{Step 1}, we know that $j$ is the only common neighbor of $p_1$, $p_2$ at distance two to $l$. Therefore we obtain $u_{ij}u_{kl}=u_{ij}u_{kl}u_{ij}$. Then Lemma \ref{com} yields $u_{ij}u_{kl}=u_{kl}u_{ij}$. \\

\noindent\emph{Step 3: We have $u_{ij}u_{kl}=u_{kl}u_{ij}$ for $d(i,k)=d(j,l)=3$.}\\
Now, let $d(i,k)=d(j,l)=3$. By Lemma \ref{disttrans}, we can choose $j=s$ and $l=t^{(3)}$. Since we know $(j,p_1), (j,p_2) \in E$, $d(l,p_1)=(l,p_2) =2$ and have $u_{ac}u_{bd}=u_{bd}u_{ac}$ for all $a,b,c,d$ with $d(a,b)=d(c,d)=2$ by \emph{Step 2}, we get
\begin{align*}
u_{ij}u_{kl}= u_{ij}u_{kl}\sum_{\substack{q;d(q,l)=3,\\ \hphantom{q;}(q,p_1)\in E}}u_{iq}
\quad \text{  and  } \quad 
u_{ij}u_{kl}= u_{ij}u_{kl}\sum_{\substack{q;d(q,l)=3,\\ \hphantom{q;}(q,p_2)\in E}}u_{iq}
\end{align*}
by using Lemma \ref{abk}. We obtain
\begin{align*}
u_{ij}u_{kl}= u_{ij}u_{kl}\sum_{\substack{q;d(q,l)=3,\\ \hphantom{q;}(q,p_1)\in E, (q,p_2)\in E}}u_{iq}
\end{align*}
and get $u_{ij}u_{kl}=u_{ij}u_{kl}u_{ij}$, since the only common neighbor of $p_1$, $p_2$ at distance three to $l$ is $j$ by \emph{Step 1}. Then Lemma \ref{com} yields $u_{ij}u_{kl}=u_{kl}u_{ij}$. 

Repeating this argument $(d-3)$-times yields the assertion. 
\end{proof}

\subsection{The Johnson graphs $J(n,2)$ and the Kneser graphs $K(n,2)$}

In the following we show that $J(n,2)$ and $K(n,2)$ have no quantum symmetry for $n \geq 5$. For $n \leq 5$, the quantum automorphism groups of $J(n,2)$ and $K(n,2)$ are already known from \cite{BanBic}, \cite{QAutPetersen}. The natural question here is what happens for $k>2$. Since we know that the odd graphs $O_k$ are the Kneser graphs $K(2k-1,k-1)$, we dealt with some special case in Subsection \ref{odd}. 

\begin{defn}
Let $n, k \in \N$. 
\begin{itemize}
\item[(i)]The \emph{Johnson graph} $J(n,k)$ is the graph with vertices corresponding to $k$-subsets of $\{1, \dots, n\}$, where two vertices are connected if and only if the intersection of the corresponding subsets has $(k-1)$ elements. 
\item[(ii)] The \emph{Kneser graph} $K(n,k)$ is the graph with vertices corresponding to $k$-subsets of $\{1, \dots, n\}$, where two vertices are connected if and only if the corresponding subsets are disjoint.
\end{itemize}
\end{defn}
\vspace{0.1cm}
\begin{ex}~
\begin{itemize} 
\item[(i)] The Kneser graphs $K(n,1)$ are the complete graphs $K_n$.
\item[(ii)]The Johnson graphs $J(n,2)$ are the line graphs of the complete graphs $K_n$. 
\end{itemize}
\end{ex}

\begin{rem}
The Johnson graphs $K(n,2)$ are distance-transitive with diameter $2$, see \cite[Theorem 9.1.2]{BCN}. For $n\leq 5$, the quantum automorphism groups of $K(n,2)$ are known, since $K(4,2) = 3K_2$, $K(5,2) =P$, where $P$ denotes the Petersen graph. 
\end{rem}

The following gives a proof of Theorem \ref{thm1.3}.

\begin{thm}
For $n \geq 5$, the Johnson graph $J(n,2)$ and the Kneser graph $K(n,2)$ do not have quantum symmetry. 
\end{thm}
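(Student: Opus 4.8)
The plan is to handle the two graphs together by exploiting their close structural relationship. First I would observe that $J(n,2)$ and $K(n,2)$ share the same vertex set (the $2$-subsets of $\{1,\dots,n\}$) and that for $n \geq 5$ the Kneser graph $K(n,2)$ is essentially the complement of the Johnson graph $J(n,2)$ (two $2$-subsets are disjoint iff they do not meet in exactly one point, and equal subsets are not vertices' neighbors). Since the quantum automorphism group is invariant under taking complements (the relations $u\varepsilon = \varepsilon u$ are equivalent to $u\bar\varepsilon = \bar\varepsilon u$ for the complementary adjacency matrix), it suffices to prove the statement for one of them; I would work with $J(n,2)$, which has diameter $2$, and transfer the conclusion to $K(n,2)$ at the end.

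Following the six-step strategy laid out in Section~\ref{families}, I would reduce via Lemma~\ref{dist} to showing $u_{ij}u_{kl} = u_{kl}u_{ij}$ for $d(i,k) = d(j,l) = m$ with $m \in \{1,2\}$, since the diameter is $2$. For the adjacency case $m=1$, I would first check whether Lemma~\ref{oneneighbor} or Lemma~\ref{ia} applies using the intersection array $\{2n-4, n-3; 1, 4\}$ of $J(n,2)$; here $b_0 = 2n-4$ and $b_1 = n-3$ so $b_0 = b_1 + (n-1)$, and two adjacent $2$-subsets sharing a common point have more than one common neighbor, so neither Lemma~\ref{oneneighbor} nor the girth criterion (Theorem~\ref{g5}) applies directly. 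Thus the adjacency case must be treated by hand: I would invoke distance-transitivity (Lemma~\ref{disttrans}) to fix a convenient pair, say $j_1 = \{1,2\}$ and $l_1 = \{1,3\}$ (adjacent, meeting in the point $1$), and then use Lemma~\ref{krit} with carefully chosen auxiliary vertices $q$ to peel off the summands in $u_{ij_1}u_{kl_1}\sum_{p} u_{ip}$ until only the $p = j_1$ term survives, yielding $u_{ij_1}u_{kl_1} = u_{ij_1}u_{kl_1}u_{ij_1}$ and hence commutation by Lemma~\ref{com}.

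For the distance-two case $m=2$, two $2$-subsets at distance $2$ in $J(n,2)$ are disjoint, and once the $m=1$ commutation is established I would attempt to apply Lemma~\ref{ia}(c), checking whether $c_2 = 2$ holds: from the intersection array $c_2 = 4$, so the clean intersection-array criterion does not apply and this case, too, needs the labelled approach. Fixing $j_1 = \{1,2\}$ and $l_1 = \{3,4\}$ via Lemma~\ref{disttrans}, I would use the already-proven $m=1$ commutation together with Lemma~\ref{abk} (in the $\QBan = \QBic$ form once it is available, or in the general distance form) to write $u_{ij_1}u_{kl_1}$ as a restricted sum over vertices $p$ with $d(l_1,p) = 2$ constrained through several intermediate neighbors, and then whittle the sum down to the single term $p = j_1$ using Lemma~\ref{krit} with auxiliary vertices that distinguish $j_1$ from the competing $2$-subsets.

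The main obstacle will be the combinatorial bookkeeping in the distance-two case: unlike the odd graphs, where girth $\geq 5$ forces unique common neighbors and drastically prunes the sums, here disjoint $2$-subsets can have many common neighbors, so I expect to need two or three successive applications of Lemma~\ref{krit} with genuinely different witnesses $q$ (exploiting the four points $\{1,2,3,4\}$ and a fresh fifth point, which is exactly where the hypothesis $n \geq 5$ enters) to isolate $j_1$. Verifying that for each candidate $p \neq j_1$ there is a witness $q$ with $d(j_1,q) \neq d(q,p)$ satisfying the hypotheses of Lemma~\ref{krit} is the delicate step; I would organize it by classifying the competing $2$-subsets according to how they intersect $\{1,2,3,4\}$. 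Once both distance cases are settled for $J(n,2)$, the complementation remark immediately gives the result for $K(n,2)$, completing the proof of Theorem~\ref{thm1.3}.
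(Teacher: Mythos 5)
Your overall skeleton matches the paper's: reduce $K(n,2)$ to $J(n,2)$ by complementation, use the diameter-$2$ structure with Lemma \ref{dist}, fix representative pairs via Lemma \ref{disttrans}, and isolate the summand $u_{ij}$ in $u_{ij}u_{kl}\sum_p u_{ip}$. However, there is a genuine gap in your treatment of the adjacency case, and you have the location of the difficulty inverted. For $j=\{1,2\}$, $l=\{1,3\}$ and a neighbor $p=\{3,b\}$ of $l$ with $b\geq 4$, the pair $j,p$ has \emph{four} common neighbors ($\{1,3\},\{1,b\},\{2,3\},\{2,b\}$), and since all distances in $J(n,2)$ lie in $\{0,1,2\}$, no single witness $q$ with $d(j,q)\neq d(q,p)$ separates $l=\{1,3\}$ from $\{1,b\}$: any $q$ containing $1$ is adjacent to both, and any $q$ avoiding $1$ and $3$ and $b$ is at distance two from both. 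So "peeling off summands with Lemma \ref{krit}" cannot by itself kill $u_{ij}u_{kl}u_{i\{3,b\}}$. The paper instead (i) for $p=\{1,a\}$ combines the full sum over common neighbors of $j$ and $p$ with several instances of Lemma \ref{krit} to derive $(n-4)\,u_{ij}u_{k\{1,3\}}u_{ip}=0$ --- this cancellation is exactly where the hypothesis $n\geq 5$ is used, not the distance-two case as you claim --- and (ii) for $p=\{3,b\}$ multiplies the relation $u_{ij}(u_{kl}+u_{k\{1,b\}})u_{ip}=0$ on the left by $u_{ip}u_{kl}$ and uses the already-established vanishing $u_{kl}u_{ij}u_{k\{1,b\}}=0$ to extract $u_{ip}u_{kl}u_{ij}u_{kl}u_{ip}=0$, hence $u_{ij}u_{kl}u_{ip}=0$. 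Neither mechanism appears in your plan.

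Conversely, the distance-two case that you flag as "the main obstacle" is in fact the easy one and needs no application of Lemma \ref{krit} and no fifth point: with $j=\{1,2\}$, $l=\{3,4\}$, apply Lemma \ref{abk} twice using the two common neighbors $\{1,3\}$ and $\{2,4\}$ of $j$ and $l$; the first restricts the sum to $p$ disjoint from $\{3,4\}$ with $1\in p$, the second to $p$ disjoint from $\{3,4\}$ with $2\in p$, and the intersection of these two families is the single vertex $\{1,2\}=j$, giving $u_{ij}u_{kl}=u_{ij}u_{kl}u_{ij}$ at once. Your complementation reduction and the $m=2$ outline are fine, but as written the proposal does not close the adjacency case, which is the heart of the theorem.
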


\begin{proof}
We show that $J(n,2)$ has no quantum symmetry for $n \geq 5$. This suffices because $K(n,2)$ is the complement of $J(n,2)$. 

Let $(i,k),(j,l) \in E$. We want to prove $u_{ij}u_{kl}=u_{kl}u_{ij}$. Since $J(n,2)$ is distance-transitive, it suffices to show this for $j=\{1,2\}$ and $l=\{1,3\}$ by Lemma \ref{disttrans}.\\

\noindent\emph{Step 1: We have $u_{ij}u_{kl} = u_{ij}u_{kl}\left(\sum\limits_{\substack{a=2,\\a \neq 3}}^n u_{i\{1,a\}} + \sum\limits_{\substack{b=2,\\b \neq 3}}^n u_{i\{3,b\}}\right)$.}\\
By Relations \eqref{QA2}, \eqref{QA3}, it holds
\begin{align*}
u_{ij}u_{kl} = u_{ij}u_{kl}\sum_{p; (l,p) \in E} u_{ip}.
\end{align*}
Since $l=\{1,3\}$, it has neighbors $\{1,a\}$, $\{3,b\}$,  $a, b \neq 1,3$ and thus
\begin{align*}
u_{ij}u_{kl} = u_{ij}u_{kl}\sum_{p; (l,p) \in E} u_{ip} = u_{ij}u_{kl}\left(\sum_{\substack{a=2,\\a \neq 3}}^n u_{i\{1,a\}} + \sum_{\substack{b=2,\\b \neq 3}}^n u_{i\{3,b\}}\right).
\end{align*}

\noindent\emph{Step 2: It holds $u_{ij}u_{kl}u_{i\{1,a\}}=0$ for $a \in \{4,\dots, n\}$ and $u_{ij}u_{kl}u_{i\{2,3\}}=0$.}\\
Let $p=\{1,a\}$. The common neighbors of $p$ and $j = \{1,2\}$ are $\{2,a\}$ and $\{1,c\}$ for $c \notin \{1,2,a\}$. Therefore
\begin{align}\label{1}
u_{ij}\left(u_{k\{2,a\}} + \sum_{\substack{c=3,\\ c\neq a}}^n u_{k\{1,c\}}\right)u_{ip}= u_{ij}\left(\sum_{a;(a,j)\in E, (a,p) \in E} u_{ka}\right) u_{ip}=u_{ij}u_{ip} =0.
\end{align}
The only common neighbors of $j,p$ and $\{2,d\}$, where $d \notin \{1,2,a\}$, are $\{2,a\}$ and $\{1,d\}$. We also know $d(j,\{2,d\})=1\neq 2=d(p,\{2,d\})$ and thus we obtain 
\begin{align}\label{2}
u_{ij}(u_{k\{2,a\}} + u_{k\{1,d\}})u_{ip} =0
\end{align}
for all such $d$ by Lemma \ref{krit}. 
This yields
\begin{align*}
u_{ij}(u_{k\{2,a\}} + u_{k\{1,d\}})u_{ip} =0 =u_{ij}(u_{k\{2,a\}} + u_{k\{1,3\}})u_{ip}
\end{align*}
and we deduce
\begin{align*}
u_{ij}u_{k\{1,d\}}u_{ip}= u_{ij}u_{k\{1,3\}}u_{ip}
\end{align*}
for $d \notin \{1,2,a\}$. Putting this into Equation \eqref{1}, we infer
\begin{align*}
u_{ij}(u_{k\{2,a\}} + (n-3)u_{k\{1,3\}})u_{ip}=0.
\end{align*}
Using Equation \eqref{2} with $d=3$, we get
\begin{align*}
(n-4)u_{ij}u_{k\{1,3\}}u_{ip}=0.
\end{align*}
Since we assumed $n \geq 5$, we obtain $u_{ij}u_{k\{1,3\}}u_{ip}=u_{ij}u_{kl}u_{ip}=0$. Furthermore, we also get $u_{ij}u_{k\{2,a\}}u_{i\{1,a\}}=0$ by Equation \eqref{2}. By repeating the arguments for $p=\{2,3\}$, one obtains $u_{ij}u_{k\{1,3\}}u_{i\{2,3\}}=u_{ij}u_{kl}u_{i\{2,3\}}=0$.\\

\noindent\emph{Step 3: It holds $u_{ij}u_{kl}u_{i\{3,b\}}=0$ for $b \in \{4,\dots, n\}$.}\\
Let $p=\{3,b\}$, $b \in \{4,\dots, n\}$. Since $l=\{1,3\}$ and $\{1,b\}$ are the only common neighbors of $j=\{1,2\}$, $p$ and $\{1,e\}$, where $e \notin \{1,2,3,b\}$, we have
\begin{align}\label{3}
u_{ij}(u_{kl} + u_{k\{1,b\}})u_{ip}=0
\end{align} 
by Lemma \ref{krit}, because $(j,\{1,e\})\in E, (p,\{1,e\})\notin E$. Now, multiplying Equation \eqref{3} by $u_{ip}u_{kl}$ from the left, we obtain
\begin{align*}
u_{ip}u_{kl}u_{ij}(u_{kl} + u_{k\{1,b\}})u_{ip}=u_{ip}u_{kl}u_{ij}u_{kl}u_{ip} +u_{ip}u_{kl}u_{ij}u_{k\{1,b\}}u_{ip}=0.
\end{align*} 
Similar to $u_{ij}u_{kl}u_{i\{1,a\}}=0$ (see Step \emph{2}), we obtain $u_{kl}u_{ij}u_{k\{1,b\}}=0$. Thus, we get 
\begin{align*}
u_{ip}u_{kl}u_{ij}u_{kl}u_{ip}=0,
\end{align*} 
which implies $u_{ij}u_{kl}u_{ip}=0$.\\ 

\noindent\emph{Step 4: We have $u_{ij}u_{kl}=u_{kl}u_{ij}$ for $(i,k),(j,l) \in E$.}\\
From \emph{Steps 1--3}, we get
\begin{align*}
u_{ij}u_{kl} = u_{ij}u_{kl}\left(\sum\limits_{\substack{a=2,\\a \neq 3}}^n u_{i\{1,a\}} + \sum\limits_{\substack{b=2,\\b \neq 3}}^n u_{i\{3,b\}}\right) = u_{ij}u_{kl}u_{ij}
\end{align*}
and therefore obtain $u_{ij}u_{kl}=u_{kl}u_{ij}$ for $(i,k),(j,l) \in E$ by Lemma \ref{com}.\\

\noindent\emph{Step 5: We have $u_{ij}u_{kl}=u_{kl}u_{ij}$ for $d(i,k)=d(j,l)=2$.}\\
Let $d(i,k)=2=d(j,l)$. We show $u_{ij}u_{kl}=u_{kl}u_{ij}$, where we can choose $j = \{1,2\}, l=\{3,4\}$ by Lemma \ref{disttrans}. 
The vertices $\{1,3\}$, $\{2,4\}$ are common neighbors of $j$ and $l$. By Lemma \ref{abk}, we get
\begin{align*}
u_{ij}u_{kl} = u_{ij}u_{kl} \sum_{\substack{\{a,b\};\{a,b\} \cap \{3,4\}= \emptyset,\\ \hphantom{\{a,b\}}(\{a,b\}, \{1,3\}) \in E}} u_{i \{a,b\}} =u_{ij}u_{kl} \sum_{\substack{\{a,b\};\{a,b\} \cap \{3,4\}= \emptyset,\\ \hphantom{\{a,b\}}1 \in \{a,b\}}} u_{i \{a,b\}}
\end{align*}
and 
\begin{align*}
u_{ij}u_{kl} = u_{ij}u_{kl} \sum_{\substack{\{c,d\};\{c,d\} \cap \{3,4\}= \emptyset,\\ \hphantom{\{a,b\}}(\{c,d\}, \{2,4\}) \in E}} u_{i \{c,d\}} =u_{ij}u_{kl} \sum_{\substack{\{c,d\};\{c,d\} \cap \{3,4\}= \emptyset,\\ \hphantom{\{a,b\}}2 \in \{c,d\}}} u_{i \{c,d\}}.
\end{align*} 
We deduce 
\begin{align*}
u_{ij}u_{kl} = u_{ij}u_{kl} \left(\sum_{\substack{\{a,b\};\{a,b\} \cap \{3,4\}= \emptyset,\\ \hphantom{\{a,b\}}1 \in \{a,b\}}} u_{i \{a,b\}}\right)\left(\sum_{\substack{\{c,d\};\{c,d\} \cap \{3,4\}= \emptyset,\\ \hphantom{\{c,d\}}2 \in \{c,d\}}} u_{i \{c,d\}}\right).
\end{align*}
Since we know $u_{i\{a,b\}} u_{i\{c,d\}} =0$ for $\{a,b\} \neq \{c,d\}$, we obtain
\begin{align*}
u_{ij}u_{kl} &= u_{ij}u_{kl} \left(\sum_{\substack{\{a,b\};\{a,b\} \cap \{3,4\}= \emptyset,\\ \hphantom{\{a,b\}}1 \in \{a,b\}}} u_{i \{a,b\}}\right)\left(\sum_{\substack{\{c,d\};\{c,d\} \cap \{3,4\}= \emptyset,\\ \hphantom{\{c,d\}}2 \in \{c,d\}}} u_{i \{c,d\}}\right)\\
 &= u_{ij}u_{kl} \left(\sum_{\substack{\{a,b\};\{a,b\} \cap \{3,4\}= \emptyset,\\ \hphantom{\{a,b\}}1 \in \{a,b\}, 2 \in \{a,b\}}} u_{i \{a,b\}}\right)\\
 &=u_{ij}u_{kl}u_{ij},
\end{align*}
since $\{1,2\}$ is the only subset containing $1$ and $2$. Then Lemma \ref{com} completes the proof, since $J(n,2)$ has diameter $2$.
\end{proof}

\subsection{Moore graphs of diameter two}\label{moore}

We show that the Moore graphs of diameter two have no quantum symmetry. Those are precisely the strongly regular graphs with girth five or equivalently all strongly regular graphs with $\mu =0$, $\lambda=1$. There are only three such graphs, the $5$-cycle, the Petersen graph and the Hoffman-Singleton graph, where the existence of a fourth graph with parameters $(3250,57,0,1)$ is still an open problem, see for example \cite[Section 6.7]{BCN}.

\begin{thm}
Strongly regular graphs with girth five have no quantum symmetry. 
\end{thm}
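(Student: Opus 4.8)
The plan is to reduce the whole statement to pairs of vertices at distance two and then exploit the rigid local geometry forced by girth five. Girth five means that $\Gamma$ contains neither triangles nor quadrangles, so adjacent vertices have no common neighbour ($\lambda=0$) and any two vertices at distance two have a \emph{unique} common neighbour ($\mu=1$); in particular $\Gamma$ has diameter two. Since $g(\Gamma)\geq 5$, Theorem \ref{g5} gives $\QBan(\Gamma)=\QBic(\Gamma)$, i.e.\ Relation \eqref{QA5} holds, which is exactly $u_{ij}u_{kl}=u_{kl}u_{ij}$ for all $(i,k),(j,l)\in E$. As $\Gamma$ has diameter two, Lemma \ref{dist} shows that it only remains to prove $u_{ij}u_{kl}=u_{kl}u_{ij}$ for all $i,j,k,l$ with $d(i,k)=d(j,l)=2$, and by Lemma \ref{com} it suffices to establish $u_{ij}u_{kl}=u_{ij}u_{kl}u_{ij}$ for such vertices.

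Fix $i,j,k,l$ with $d(i,k)=d(j,l)=2$ and let $q$ be the unique common neighbour of $j$ and $l$ (unique since $\mu=1$). First I would localise on the column side: applying Lemma \ref{abk} with this $q$ and $s=t=1$ — the hypothesis $u_{kl}u_{aq}=u_{aq}u_{kl}$ for $d(a,k)=1$ being precisely Relation \eqref{QA5} — yields
\begin{align*}
u_{ij}u_{kl}=u_{ij}u_{kl}\sum_{\substack{p;\,(p,q)\in E,\\ d(p,l)=2}} u_{ip}.
\end{align*}
Because $\lambda=0$, every neighbour of $q$ other than $l$ lies at distance two from $l$, so the sum runs exactly over the neighbours of $q$ different from $l$, and the term $p=j$ contributes $u_{ij}u_{kl}u_{ij}$. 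Hence the statement reduces to showing $u_{ij}u_{kl}u_{ip}=0$ for every neighbour $p$ of $q$ with $p\neq j,l$.

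To kill such a term I would combine Lemma \ref{krit} with the row-side common neighbour $s$ of $i$ and $k$, imitating the distance-two argument for the odd graphs in Subsection \ref{odd}. For a fixed offending $p$, an admissible auxiliary vertex for Lemma \ref{krit} (for instance a neighbour of $j$ other than $q$, which by $\lambda=0$, $\mu=1$ satisfies $d(j,\cdot)=1\neq 2=d(\cdot,p)$) produces a relation $u_{ij}\big(\sum_T u_{kT}\big)u_{ip}=0$, summed over middle vertices $T$ equidistant from $j$ and $p$. Among these, every $T$ with $(q,T)\notin E$ may be discarded: the common neighbour $a$ of $j$ and $T$ is then non-adjacent to $p$, so inserting $s$ creates a factor $u_{sa}$ which, after commuting through by \eqref{QA5}, annihilates $u_{ip}$ by \eqref{QA3} (since $(s,i)\in E$ and $(a,p)\notin E$). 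For the Petersen graph ($k=3$) the only neighbour of $q$ at distance two from both $j$ and $p$ is $l$ itself, so this single step leaves exactly $u_{ij}u_{kl}u_{ip}=0$ and the proof closes, recovering the result of \cite{QAutPetersen}; Lemma \ref{com} then finishes.

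The hard part will be the residual terms for higher valency. After discarding the $T\not\sim q$ contributions one is left with the $k-2$ coupled terms $u_{ij}u_{kT}u_{ip}$ indexed by the neighbours $T$ of $q$ distinct from $j,p$, and a single admissible auxiliary vertex cannot separate them: every vertex $Q$ with $d(j,Q)\neq d(Q,p)$ turns out to be equidistant from all neighbours of $q$ apart from $j$ and $p$, so each application of Lemma \ref{krit} only delivers the relation that their \emph{sum} vanishes. For the Hoffman-Singleton graph ($k=7$) — and the conjectural Moore graph of valency $57$ — one therefore has to decouple these $k-2$ terms by combining several such relations, or by extracting further linear relations from the strongly-regular parameters (e.g.\ varying the right-hand column $p$ simultaneously). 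The main obstacle is to carry this out purely locally, using only $\lambda=0$, $\mu=1$ and girth five: distance-transitivity, which would let Lemma \ref{disttrans} fix a single representative pair, is available for the three known graphs but must be avoided if the conclusion is to cover the conjectural valency-$57$ graph as well.
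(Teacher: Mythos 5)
Your reduction is correct and matches the paper's Step 1 exactly: girth five gives $\lambda=0$, $\mu=1$, diameter two, $\QBan(\Gamma)=\QBic(\Gamma)$ by Theorem \ref{g5}, and Lemma \ref{abk} applied with the unique common neighbour $t$ of $j$ and $l$ localises the problem to showing $u_{ij}u_{kl}u_{ip}=0$ for every neighbour $p\neq j,l$ of $t$. But the proof then stops: you acknowledge yourself that your Lemma \ref{krit} strategy only isolates $u_{kl}$ when the valency is $3$, and for the Hoffman--Singleton graph (valency $7$) and the conjectural valency-$57$ Moore graph you are left with $k-2$ coupled terms and no argument to decouple them. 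That is the entire content of the theorem beyond the Petersen graph, so this is a genuine gap, not a technicality.

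The missing idea in the paper is to avoid Lemma \ref{krit} altogether and instead absorb $u_{kl}$ by a double insertion. Write $u_{ij}u_{kl}u_{ip}=u_{ij}u_{kl}u_{st}u_{ip}$, where $s$ is the unique common neighbour of $i$ and $k$, and fix a neighbour $b\neq t$ of $l$; girth five forces $d(b,j)=d(b,p)=2$. Inserting $\sum_{a;(a,k)\in E,\,d(a,i)=2}u_{ab}$ to the left of $u_{kl}$ and using that $l$ is the \emph{unique} common neighbour of $b$ and $t$, the middle sum $\sum_c u_{kc}$ collapses and $u_{kl}$ disappears: one is left with terms $u_{ij}u_{ab}u_{st}u_{ip}$. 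Each of these is killed by inserting $u_{ef}$, where $e$ is the common neighbour of $i$ and $a$ and $f$ is the unique common neighbour of $j$ and $b$: one checks $(f,p)\notin E$ (otherwise $j$ and $p$ would have the two common neighbours $t$ and $f$, a quadrangle), so $u_{ef}u_{ip}=0$ by Relation \eqref{QA3} after commuting edge-generators via \eqref{QA5}. This argument uses only $\lambda=0$, $\mu=1$ and girth five, needs no distance-transitivity, and works uniformly for all valencies --- exactly the generality you correctly identified as necessary but could not reach.
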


\begin{proof}
Since the graph $\Gamma$ has girth five, we get $\QBan(\Gamma) = \QBic(\Gamma)$ by Theorem \ref{g5}. 

Let $d(i,k)=d(j,l)=2$. It remains to show $u_{ij}u_{kl}=u_{kl}u_{ij}$.\\

\noindent\emph{Step 1: We have $u_{ij}u_{kl}=u_{ij}u_{kl}\sum\limits_{\substack{p;d(p,l)=2,\\(p,t)\in E}} u_{ip}$, where $t$ is the only common neighbor of $j$ and $l$.}\\
There exist exactly one $s \in E$ such that $(i,s) \in E, (k,s) \in E$ and exactly one $t \in E$ such that $(j,t) \in E, (l,t) \in E$, since otherwise we would get a quadrangle in $\Gamma$. We get
\begin{align*}
u_{ij}u_{kl} = u_{ij}u_{st}u_{kl},
\end{align*}
by Relations \eqref{QA2}, \eqref{QA3} and it holds 
\begin{align*}
 u_{ij}u_{kl}= u_{ij}u_{kl}\sum_{\substack{p;d(p,l)=2,\\(p,t)\in E}} u_{ip},
\end{align*}
because we have $\QBan(\Gamma) = \QBic(\Gamma)$ and thus can use Lemma \ref{abk}. \\

\noindent\emph{Step 2: We have $u_{ij}u_{kl}u_{ip} =0$ for $p\neq j$ with $d(p,l)=2$, $(p,t)\in E$.}\\
If $\Gamma$ is $2$-regular (the $5$-cycle) then we are done, because the only vertex $p$ with $d(p,l)=2$, $(p,t)\in E$ is $j$. Therefore, we can assume that $\Gamma$ is $n$-regular with $n \geq 3$ in the remaining part of the proof. Take $p \neq j$ with $d(p,l)=2, (p,t)\in E$. It holds 
\begin{align*}
u_{ij}u_{kl}u_{ip} = u_{ij}u_{kl}u_{st}u_{ip} = u_{ij} \left(\sum_{\substack{a;(a,k)\in E,\\d(a,i)=2}} u_{ab}\right)u_{kl}u_{st}u_{ip}, 
\end{align*}
where we choose $b\neq t$ with $(b,l)\in E$, which implies $d(b,j)=d(b,p)=2$. We know that $l$ is the only common neighbor of $b$ and $t$, because $\Gamma$ has girth five. We deduce 
\begin{align*}
u_{ij} \left(\sum_{\substack{a;(a,k)\in E,\\d(a,i)=2}} u_{ab}\right)u_{kl}u_{st}u_{ip} =u_{ij} \left(\sum_{\substack{a;(a,k)\in E,\\d(a,i)=2}} u_{ab}\right)u_{st}u_{ip} 
\end{align*}
by Relations \eqref{QA2}, \eqref{QA3}. Furthermore, there exist exactly one $e \in E$ such that $(i,e) \in E, (e,a) \in E$ for all $a$ with $d(a,i)=2$ and exactly one $f \in E$ such that $(j,f) \in E, (b,f) \in E$, since otherwise we would get a quadrangle in $\Gamma$. This yields
\begin{align*}
u_{ij}u_{ab}u_{st}u_{ip} = u_{ij}u_{ef}u_{ab}u_{st}u_{ip} = u_{ij}u_{ef}u_{ab}u_{ip}u_{st}, 
\end{align*}
where we also used $u_{ip}u_{st} =u_{st}u_{ip}$. Because of $u_{ef}u_{ab}=u_{ab}u_{ef}$, we get 
\begin{align*}
u_{ij}u_{ab}u_{st}u_{ip}= u_{ij}u_{ef}u_{ab}u_{ip}u_{st} = u_{ij}u_{ab}u_{ef}u_{ip}u_{st}.
\end{align*}
It holds $(f,p)\notin E$, because otherwise $j$ and $p$ would have two common neighbors, $t$ and $f$, where we know $t\neq f$ since we have $(b,f) \in E$ whereas $d(b,t) =2$. 
Because we know $(i,e) \in E$, we obtain
\begin{align*}
u_{ij}u_{ab}u_{st}u_{ip} =u_{ij}u_{ab}u_{ef}u_{ip}u_{st}=0,
\end{align*}
by Relation \eqref{QA3}. Summarizing, we get 
\begin{align*}
u_{ij}u_{kl}u_{ip} = u_{ij} \left(\sum_{\substack{a;(a,k)\in E,\\d(a,i)=2}} u_{ab}\right)u_{st}u_{ip} =\sum_{\substack{a;(a,k)\in E,\\d(a,i)=2}} u_{ij} u_{ab}u_{st}u_{ip}= 0.
\end{align*}

\noindent\emph{Step 3: We have $u_{ij}u_{kl}=u_{kl}u_{ij}$.}\\
By the previous steps, we conclude
\begin{align*}
u_{ij}u_{kl}= u_{ij}u_{kl}\sum\limits_{\substack{p;d(p,l)=2\\(p,t)\in E}} = u_{ij}u_{kl}u_{ij},
\end{align*}
which implies that $u_{ij}$ and $u_{kl}$ commute by Lemma \ref{com}. This completes the proof.
\end{proof}

\begin{rem}
Since it is known that the $5$-cycle and the Petersen graph have no quantum symmetry, the only new insight we get is that the Hoffman-Singleton graph has no quantum symmetry. Also, if the strongly regular graph with parameters  $(3250, 57, 0,1)$  exists, then it has no quantum symmetry by the previous theorem. 
\end{rem}

\subsection{Paley graphs $P_9$, $P_{13}$ and $P_{17}$}\label{paley}
The Paley graphs are constructed using finite fields. We use this construction to show that $P_9$, $P_{13}$ and $P_{17}$ have no quantum symmetry. 

\begin{defn}
Let $q$ be a prime power with $q = 1 \,\,\mathrm{mod} \,\,4$ and let $\mathbb{F}_q$ be the finite field with $q$ elements. The \emph{Paley graph} $P_q$ is the graph with vertex set $\mathbb{F}_q$, where vertices are connected if and only if their difference is a square in $F$. 
\end{defn}

\begin{rem}
The Paley graphs are distance-transitive. 
\end{rem}

\begin{prop}
The Paley graphs $P_9$, $P_{13}$ and $P_{17}$ have no quantum symmetry. 
\end{prop}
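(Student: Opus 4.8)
The plan is to prove that each of $P_9$, $P_{13}$ and $P_{17}$ has no quantum symmetry by exploiting the arithmetic structure of the Paley graph together with the distance-transitivity, in the spirit of the strategy outlined at the start of Section \ref{families}. Since each Paley graph $P_q$ has diameter two (they are strongly regular), by Lemma \ref{dist} it suffices to establish $u_{ij}u_{kl}=u_{kl}u_{ij}$ first for adjacent pairs (distance one) and then for distance-two pairs. By Lemma \ref{disttrans}, for each distance $m \in \{1,2\}$ I only need to verify the commutation for \emph{one} fixed pair $(j_1,l_1)$ with $d(j_1,l_1)=m$ and all $(i,k)$ with $d(i,k)=m$. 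The natural choice, using the vertex set $\F_q$, is to take $j_1=0$ and let $l_1$ be a fixed square (for $m=1$) or a fixed nonsquare (for $m=2$), so that the neighbors of $l_1$ are described explicitly as translates of the set of squares.

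First I would compute, for each of the three fields $\F_9$, $\F_{13}$, $\F_{17}$, the relevant intersection data: the parameters $(n,k,\lambda,\mu)$, i.e.\ how many common neighbors adjacent and non-adjacent vertices have. For $P_q$ these are $\lambda=(q-5)/4$ and $\mu=(q-1)/4$, giving $\lambda=1,\mu=2$ for $q=9$; $\lambda=2,\mu=3$ for $q=13$; and $\lambda=3,\mu=4$ for $q=17$. Then I would try to invoke Lemma \ref{ia}: checking whether the intersection array satisfies one of conditions (a), (b), (c) there. For $P_9$ the array is $\{4,2;1,2\}$, so $b_0=4$, $b_1=2$, giving $b_1+2=b_0$ and $c_2=2$, $m=2$; condition (c) requires $b_1+3=b_0$, which fails, while (a),(b) require $c_2=1$, which also fails. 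So Lemma \ref{ia} does not directly apply, and likewise the girth is three so Theorem \ref{g5} is unavailable; this tells me the proof must go through the explicit field construction as in step (5) of the strategy.

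The core of the argument, then, is to use Lemma \ref{krit} with a carefully chosen auxiliary vertex $q$ in the field. Concretely, for the distance-one case I would set $j=0$, $l=s$ for a chosen square $s$, pick a neighbor $p$ of $l$ with $p\neq j$, and search for an element $q\in\F_q$ with $d(q,l)$ prescribed and $d(j,q)\neq d(q,p)$ such that $l$ is the \emph{unique} vertex at the prescribed distances from $j$, $p$ and $q$ simultaneously; this uniqueness is exactly a statement about intersections of translated copies of the square-set, which can be checked by a finite computation in each field. Applying Lemma \ref{krit} repeatedly then kills every term $u_{ij}u_{kl}u_{ip}$ with $p\neq j$ in the expansion $u_{ij}u_{kl}=u_{ij}u_{kl}\sum_{p;d(l,p)=1}u_{ip}$, leaving $u_{ij}u_{kl}=u_{ij}u_{kl}u_{ij}$, whence Lemma \ref{com} gives commutativity. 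The distance-two case proceeds identically with $l$ a nonsquare.

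The main obstacle I anticipate is the combinatorial bookkeeping in verifying the uniqueness hypotheses of Lemma \ref{krit}: because the girth is three and $\lambda,\mu$ grow with $q$, a single auxiliary vertex $q$ may not pin down $l$ uniquely, and I expect to need \emph{several} auxiliary vertices (as in the Johnson-graph argument of Subsection 4.3, where two neighbors $p_1,p_2$ were intersected) and to combine the resulting partial sums. Each such verification reduces to checking that certain systems ``$x$ and $x-c$ are both squares'' have a unique solution in $\F_q$, which is a concrete but case-dependent finite check; I would organize these as short explicit lists of squares and their pairwise differences in $\F_9$, $\F_{13}$, $\F_{17}$ respectively. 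The fact that $q$ is small in all three cases is what makes this feasible, and is presumably why the author treats these three graphs by an \emph{ad hoc} field computation rather than a uniform theorem.
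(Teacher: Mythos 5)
Your plan is essentially the paper's proof: reduce to a single pair $(j_1,l_1)$ per distance via Lemma \ref{disttrans}, expand $u_{ij}u_{kl}$ over the neighbors of $l$, and kill each cross term $u_{ij}u_{kl}u_{ip}$ by finding an auxiliary vertex $q$ for Lemma \ref{krit} through an explicit finite check in $\F_{13}$ and $\F_{17}$ (the paper lists one such $q$ per neighbor $p$; no intersection of several auxiliary vertices turns out to be needed). You miss two small shortcuts the paper uses: for $P_9$ the parameter $\lambda=1$ means Lemma \ref{oneneighbor} gives $\QBan(P_9)=\QBic(P_9)$ immediately, with no field computation; and for the distance-two case of all three graphs the paper does not redo the computation with a nonsquare but simply invokes self-complementarity of $P_q$ to transfer the adjacent-pair argument to non-adjacent pairs. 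Neither omission is a gap--your direct route would also work--but the self-complementarity observation is what keeps the verification to a single finite check per graph.
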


\begin{proof}
Note that the Paley graph $P_9$ is strongly regular with parameters (9,4,1,2). Thus, Lemma \ref{oneneighbor} yields $\QBan(P_9) = \QBic(P_9)$.
Since $P_9$ is self-complementary, the arguments in the proof of Lemma \ref{oneneighbor} also work for $(i,k), (j,l) \notin E$. Thus $C(\QBan(P_9))$ is commutative.

Regarding the Paley graph $P_{13}$, observe that $0,1,3,4,9,10,12$ are the squares in $\mathbb{F}_{13}$. Let $(i,k),(j,l)\in E$. Since $P_{13}$ is distance-transitive, we can choose $j=1$, $l=2$ by Lemma \ref{disttrans}. We get
\begin{align*}
u_{i1}u_{k2} = u_{i1}u_{k2}\sum_{p;(p,2)\in E} u_{ip},
\end{align*}
by Relations \eqref{QA2}, \eqref{QA3}. The neighbors of $2$ are $1,3,5,6,11,12$. The task is now to find for every neighbor $p\neq 1$ of $2$ a vertex $q$, $d(q,2)=s$ with $d(1,q) \neq d(q,p)$, such that $2$ is the only common neighbor of $1,p$ in distance $s$ to $q$, because then we get $u_{ij}u_{kl}u_{ip} =0$ by Lemma \ref{krit}. We find the following vertices that fulfill these properties: $11$ for $3$, $11$ for $5$, $3$ for $6$, $5$ for $11$ and $5$ for $12$. Thus we get $u_{i1}u_{k2} =u_{i1}u_{k2}u_{i1}$ and we conclude $\QBan(P_{13}) = \QBic(P_{13})$ by Lemma \ref{com} and Lemma \ref{disttrans}. Since $P_{13}$ is self-complementary, the same arguments work for $(i,k), (j,l) \notin E$ and we get that $P_{13}$ has no quantum symmetry. 

Concerning the Paley graph $P_{17}$, observe that $0,2,4,8,9,13,15,16$ are the squares in $\mathbb{F}_{17}$. Let $(i,k),(j,l)\in E$. Since $P_{17}$ is distance-transitive, we can choose $j=1$, $l=2$ by Lemma \ref{disttrans}. We get
\begin{align*}
u_{i1}u_{k2} = u_{i1}u_{k2}\sum_{p;(p,2)\in E} u_{ip},
\end{align*}
by Relations \eqref{QA2}, \eqref{QA3}. The neighbors of $2$ are $1,3,4,6,10,11,15,17$. As for $P_{13}$, the task is to find for every neighbors $p\neq 1$ of $2$ a vertex $q$, $d(q,2)=s$ with $d(1,q) \neq d(q,p)$, such that $2$ is the only common neighbor of $1,p$ in distance $s$ to $q$, because then we get $u_{ij}u_{kl}u_{ip} =0$ by Lemma \ref{krit}. We have the following vertices that fulfill these properties: $10$ for $3$, $10$ for $4$, $17$ for $6$, $4$ for $10$, $15$ for $11$, $11$ for $15$, $6$ for $17$. We get $u_{i1}u_{k2} =u_{i1}u_{k2}u_{i1}$ and therefore $\QBan(P_{17}) = \QBic(P_{17})$ by Lemma \ref{com} and Lemma \ref{disttrans}. Since $P_{17}$ is self-complementary, we the same arguments work for $(i,k), (j,l) \notin E$ and we obtain that $P_{17}$ has no quantum symmetry. 
\end{proof}

\begin{rem}
Note that it was already shown in \cite{BanBic} that $P_9$ has no quantum symmetry. In \cite{ArthurQSymm}, it was proven that $P_{13}$ and $P_{17}$ have no quantum symmetry. Thus, we just give alternative proofs of those facts. One could try to get similar results for other Paley graphs $P_{q}$, $q >17$. But using our method one has to treat them case by case, we do not get a general statement for all Paley graphs in this way. 
\end{rem}

\section{Quantum automorphism groups of cubic distance-transitive graphs}\label{cubicdist}

In this section we study the quantum automorphism groups of all cubic distance-transitive graphs. Those quantum automorphism groups are known for the complete graph $K_4$, the complete bipartite graph $K_{3,3}$, the cube $Q_3$ from \cite{BanBic} and for the Petersen graph from \cite{QAutPetersen}. The following result was established by Biggs and Smith in \cite{BiggsSmith}.

\begin{thm}[Biggs, Smith]
There are exactly twelve cubic distance-transitive graphs.
\end{thm}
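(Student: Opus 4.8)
The plan is to reduce the classification to an analysis of feasible intersection arrays. Since every distance-transitive graph is distance-regular (as observed in the remark following Definition \ref{distance-transitive}), a cubic distance-transitive graph $\Gamma$ is a distance-regular graph of valency $b_0 = 3$ and some diameter $d$. Writing $a_i$ for the number of neighbors of a vertex $w$ at distance $i$ from $v$ (where $d(v,w)=i$) that again lie at distance $i$ from $v$, the valency condition gives $c_i + a_i + b_i = 3$ for every $0 \le i \le d$, together with $c_0 = a_0 = 0$, $b_0 = 3$, $c_1 = 1$ and $b_d = 0$. Invoking the standard monotonicity of intersection numbers, $1 = c_1 \le c_2 \le \dots \le c_d$ and $b_0 \ge b_1 \ge \dots \ge b_{d-1} \ge 1$, together with $a_i,b_i,c_i \ge 0$, the only triples $(c_i,a_i,b_i)$ available at an intermediate distance $1 \le i \le d-1$ are $(1,0,2)$, $(1,1,1)$ and $(2,0,1)$, while the terminal triple must be $(c_d,3-c_d,0)$.

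Next I would convert these local constraints into a global candidate list. Because $(c_i)$ and $(b_i)$ are monotone, the admissible pattern can change only finitely often and never revert, so any array is a concatenation of a few monotone blocks; organising the case distinction by girth (equivalently, by whether $a_1=1$, forcing $(1,1,1)$ at $i=1$, or $a_1=0$ with $c_2=1$ or $c_2=2$) controls the bookkeeping. The essential finiteness input, however, is the feasibility of the eigenvalues: the adjacency matrix of a distance-regular graph has exactly $d+1$ distinct eigenvalues, namely those of the tridiagonal intersection matrix built from the $b_i$ and $c_i$, and each eigenvalue multiplicity is a fixed rational expression in the intersection numbers. Imposing that all these multiplicities be positive integers summing to the order $n$ (together with the further feasibility conditions) eliminates all but finitely many of the numerically monotone patterns and in particular bounds the diameter; pure monotonicity alone permits infinitely many arrays, so this is genuinely the step that produces finiteness.

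For each surviving intersection array I would then settle existence and uniqueness. Existence is handled by exhibiting the explicit models of Table \ref{table1}---$K_4$, $K_{3,3}$, the cube $Q_3$, the Petersen, Heawood, Pappus and Desargues graphs, the dodecahedron, the Coxeter graph, the Tutte $8$-cage, the Foster graph and the Biggs--Smith graph---several of which arise as incidence graphs of configurations or as other highly symmetric constructions. Uniqueness, given the array, proceeds by reconstructing the graph level by level from a fixed basepoint, using the intersection numbers to count adjacencies between distance layers and the transitive automorphism action to pin them down; this reconstruction simultaneously verifies that the candidate is actually distance-transitive and not merely distance-regular, which is exactly what isolates the twelve graphs here from the single additional distance-regular example, the Tutte $12$-cage.

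The main obstacle is this final realisation-and-uniqueness step rather than the numerical analysis of the first two paragraphs: one must rule out feasible arrays that support no graph at all, and prove that each realisable array is realised by a unique graph up to isomorphism. Distinguishing genuine distance-transitivity from mere distance-regularity---with the Tutte $12$-cage as the cautionary case of a feasible, realisable array whose graph fails to be distance-transitive---is precisely where the transitivity hypothesis must be used in full, and is the delicate part of the argument.
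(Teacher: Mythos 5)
The paper does not prove this statement at all: it is imported verbatim from Biggs and Smith's 1971 classification and cited as such, so there is no internal argument to compare yours against. Judged on its own, your outline correctly identifies the strategy of the original proof (feasibility analysis of cubic intersection arrays, with integrality of eigenvalue multiplicities of the tridiagonal intersection matrix supplying the finiteness, followed by realisation and uniqueness for each surviving array), and you are right that monotonicity of the $c_i$ and $b_i$ alone cannot bound the diameter. But as it stands the proposal is a plan rather than a proof, and the gaps are exactly at the load-bearing points. You never actually derive a diameter bound or produce the finite list of feasible arrays; the multiplicity computations you invoke are the entire content of the finiteness step and are left unexecuted.

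More seriously, the uniqueness step as you describe it --- ``reconstructing the graph level by level from a fixed basepoint, using the intersection numbers to count adjacencies between distance layers'' --- is not a valid general principle: a distance-regular graph is \emph{not} determined by its intersection array, as this very paper emphasizes with the Shrikhande graph and the $4\times 4$ rook's graph, which share the array $\{6,3;1,2\}$ but are non-isomorphic. So for each feasible cubic array you would need a bespoke argument that at most one distance-\emph{transitive} graph realises it (and the Tutte $12$-cage shows that distance-regular realisations beyond the transitive one can exist). Ruling out the feasible arrays that support no graph, and the array-by-array uniqueness arguments, constitute most of the work in Biggs--Smith's paper; acknowledging that these are ``the delicate part'' does not discharge them. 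The proposal is a reasonable summary of how the classification goes, but it is not a proof.
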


Thus, there are eight remaining graphs. We treat them case by case. We first start with a useful lemma. The proof is similar to the one of Theorem 3.3 in \cite{QAutPetersen}.

\begin{lem}\label{d2}
Let $\Gamma$ be a cubic graph with girth $g(\Gamma) \geq 5$ and let $(u_{ij})_{1 \leq i,j \leq n}$ be the generators of $C(\QBan(\Gamma))$. Then we have $u_{ij}u_{kl} = u_{kl}u_{ij}$ for $d(i,k) = 2 = d(j,l)$. 
\end{lem}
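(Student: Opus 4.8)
The goal is to prove $u_{ij}u_{kl}=u_{kl}u_{ij}$ whenever $d(i,k)=d(j,l)=2$ in a cubic graph $\Gamma$ of girth at least five. Since $g(\Gamma)\geq 5$, Theorem \ref{g5} already gives $\QBan(\Gamma)=\QBic(\Gamma)$, so Relation \eqref{QA5} is available throughout, and by Lemma \ref{disttrans} it suffices to fix one pair $(j,l)$ with $d(j,l)=2$ and prove commutation for all $(i,k)$ with $d(i,k)=2$. The plan is to invoke Lemma \ref{com}: I will show $u_{ij}u_{kl}=u_{ij}u_{kl}u_{ij}$, which forces commutativity.

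First I would set up the neighbourhood structure. Because $g(\Gamma)\geq 5$, two vertices at distance two have a \emph{unique} common neighbour, so let $t$ be the unique common neighbour of $j$ and $l$. Starting from Lemma \ref{dist} and Relation \eqref{QA2}, I expand $u_{ij}u_{kl}=u_{ij}u_{kl}\sum_{p;d(p,l)=2}u_{ip}$, and then use the commutation relation \eqref{QA5} together with Lemma \ref{abk} (applied with $q=t$, $s=t=1$, which is legitimate since $\QBan(\Gamma)=\QBic(\Gamma)$) to cut this sum down to
\begin{align*}
u_{ij}u_{kl}=u_{ij}u_{kl}\sum_{\substack{p;d(p,l)=2,\\(p,t)\in E}}u_{ip}.
\end{align*}
The neighbours of $t$ other than $j$ and $l$ are at most two more (since $\Gamma$ is cubic), and among the vertices $p$ with $d(p,l)=2$ and $(p,t)\in E$, the vertex $j$ itself certainly appears. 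The remaining task is to kill every other term $u_{ij}u_{kl}u_{ip}$ with $p\neq j$.

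The main obstacle, and the heart of the argument, is showing $u_{ij}u_{kl}u_{ip}=0$ for such $p$. I would use the unique common neighbour $s$ of $i$ and $k$ (again unique by girth five) and push $u_{st}$ through using Relation \eqref{QA3}, writing $u_{ij}u_{kl}=u_{ij}u_{st}u_{kl}$. I then want to find a ``blocking'' vertex: for $p\neq j$ adjacent to $t$ with $d(p,l)=2$, the vertices $j$ and $p$ are both neighbours of $t$, so by girth five $t$ is their unique common neighbour and $(j,p)\notin E$ (no triangle). I would exploit Lemma \ref{krit} with a suitably chosen auxiliary vertex $q$ whose distances to $j$ and $p$ differ, or alternatively argue directly as in the Moore-graph proof: introduce neighbours of $k$ at distance two from $i$, track the forced non-adjacencies coming from girth five, and apply Relation \eqref{QA3} to obtain a zero. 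The key tension to verify is that no inadvertent quadrangle is created, which is exactly what $g(\Gamma)\geq 5$ prevents. Once every $p\neq j$ term vanishes, the surviving sum is just $u_{ij}$, giving $u_{ij}u_{kl}=u_{ij}u_{kl}u_{ij}$, and Lemma \ref{com} completes the proof. The delicate part is the careful case bookkeeping of which neighbours of $t$ (there are at most two besides $j$) actually satisfy $d(p,l)=2$, and producing the correct blocking vertex for each; I expect this combinatorial verification, rather than any algebraic manipulation, to be where the real work lies.
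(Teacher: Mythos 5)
Your first half matches the paper: using the unique common neighbours $s$ of $i,k$ and $t$ of $j,l$ (unique by girth $\geq 5$) together with Lemma \ref{abk} to reduce to $u_{ij}u_{kl}=u_{ij}u_{kl}(u_{ij}+u_{iq})$, where $q$ is the third neighbour of the degree-three vertex $t$. Two problems remain, one minor and one essential. The minor one: you invoke Lemma \ref{disttrans} to fix a single pair $(j,l)$, but Lemma \ref{d2} assumes only that $\Gamma$ is cubic of girth $\geq 5$, not that it is distance-transitive, so that reduction is not available (your argument is generic in $(j,l)$ anyway, so this is harmless but should be removed).

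The essential gap is that you never actually prove $u_{ij}u_{kl}u_{iq}=0$, which is the whole content of the lemma beyond the setup; you name two candidate strategies and verify neither. The Lemma \ref{krit} route would require a vertex $w$ with $d(w,l)=s$, $d(j,w)\neq d(w,q)$, such that $l$ is the \emph{only} vertex at distance $s$ from $w$ and distance $2$ from both $j$ and $q$; in a general cubic girth-five graph no such $w$ is apparent (the natural candidate $w=t$ fails because $d(j,t)=d(t,q)=1$), and the paper does not argue this way. Instead the paper uses a completion-to-the-identity trick that exploits cubicity: writing $u_{ij}u_{kl}u_{iq}=u_{ij}u_{st}u_{kl}u_{iq}$ and observing that the two missing terms vanish, namely $u_{ij}u_{st}u_{kq}u_{iq}=0$ (orthogonality in column $q$) and $u_{ij}u_{st}u_{kj}u_{iq}=u_{ij}u_{kj}u_{st}u_{iq}=0$ (commute $u_{st}$ past $u_{kj}$ by Relation \eqref{QA5}, then orthogonality in column $j$), one may replace $u_{kl}$ by $\sum_{a;(t,a)\in E}u_{ka}$ because $t$ has exactly the three neighbours $j,l,q$, and then by $\sum_a u_{ka}=1$ using Relation \eqref{QA3}. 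This yields $u_{ij}u_{kl}u_{iq}=u_{ij}u_{st}u_{iq}=u_{st}u_{ij}u_{iq}=0$ since $j\neq q$. That algebraic step, not a combinatorial search for a blocking vertex, is where the proof lives, and it is absent from your proposal.
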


\begin{proof}
Let $d(i,k) =2 = d(j,l)$. There exist exactly one $s \in E$ such that $(i,s) \in E, (k,s) \in E$ and exactly one $t \in E$ such that $(j,t) \in E, (l,t) \in E$, since otherwise we would get a quadrangle in $\Gamma$. We know $u_{st}u_{kl} =u_{kl}u_{st}$ by Theorem \ref{g5} and therefore we get
\begin{align*}
u_{ij}u_{kl} = u_{ij}u_{kl}\left(\sum_{\substack{p;d(l,p)=2,\\(t,p) \in E}} u_{ip}\right)
\end{align*} 
by Lemma \ref{abk}. The graph $\Gamma$ is $3$-regular and we know that $j$ and $l$ are neighbors of $t$, where $d(l,l)=0\neq 2$. We deduce
\begin{align*}
u_{ij}u_{kl}= u_{ij}u_{kl}(u_{ij} + u_{iq}) 
\end{align*}
where we denote by $q$ the third neighbor of $t$. 

It holds 
\begin{align*}
u_{ij}u_{kl} = u_{ij}\left(\sum_{a}u_{sa}\right)u_{kl} = u_{ij}\left(\sum_{a;(j,a)\in E, (a,l)\in E}u_{sa}\right)u_{kl}=u_{ij}u_{st}u_{kl},
\end{align*}
since $t$ is the only common neighbor of $j$ and $l$.
Observe that
\begin{align*}
u_{ij}u_{st}u_{kq}u_{iq} = 0 \quad \text{ and } \quad u_{ij}u_{st}u_{kj}u_{iq} = u_{ij}u_{kj}u_{st}u_{iq} = 0 
\end{align*}
by Relations \eqref{QA1} and $u_{st}u_{kj} = u_{kj}u_{st}$ from Theorem \ref{g5}. We therefore get
\begin{align*}\allowdisplaybreaks
u_{ij}u_{kl}u_{iq}&=u_{ij}u_{st}u_{kl}u_{iq} \\
&= u_{ij}u_{st}(u_{kl}+u_{kj} + u_{kq})u_{iq}\\ 
&= u_{ij}u_{st}\left(\sum_{a;(t,a) \in E} u_{ka}\right)u_{iq}\\
&= u_{ij}u_{st}\left(\sum_{a=1}^n u_{ka}\right)u_{iq}\\
&= u_{ij}u_{st}u_{iq},
\end{align*}
where we also used Relations \eqref{QA2}, \eqref{QA3}. 
By Relation \eqref{QA1} and using $u_{ij}u_{st} = u_{st}u_{ij}$, we obtain
\begin{align*}
u_{ij}u_{kl}u_{iq} = u_{ij}u_{st}u_{iq} = u_{st}u_{ij}u_{iq} = 0,
\end{align*}
since $j \neq q$. 

We conclude
\begin{align*}
u_{ij}u_{kl} = u_{ij}u_{kl}u_{ij}.
\end{align*}
Then Lemma \ref{com} yields $u_{ij}u_{kl} = u_{kl} u_{ij}$ and this completes the proof.
\end{proof}

\begin{lem}\label{g}
Let $\Gamma$ be a cubic distance-regular graph of order $\geq 10$ and let \linebreak$(u_{ij})_{1 \leq i,j \leq n}$ be the generators of $C(\QBan(\Gamma))$. Then we have $u_{ij}u_{kl}=u_{kl}u_{ij}$ for $d(i,k)=d(j,l) \leq 2$. 
\end{lem}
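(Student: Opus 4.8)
The plan is to reduce the statement to results already available in the excerpt, the only genuine input being a girth bound. First I would show that a cubic distance-regular graph $\Gamma$ of order $\geq 10$ automatically has girth $g(\Gamma) \geq 5$. Granting this, Theorem \ref{g5} gives $\QBan(\Gamma) = \QBic(\Gamma)$, so Relation \eqref{QA5} holds and delivers $u_{ij}u_{kl} = u_{kl}u_{ij}$ for all $i,j,k,l$ with $d(i,k) = d(j,l) = 1$, while Lemma \ref{d2} gives exactly the same conclusion for $d(i,k) = d(j,l) = 2$. The case $d(i,k) = d(j,l) = 0$ is trivial: then $i = k$ and $j = l$, so both products equal $u_{ij}$ and commute. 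Combining the three distances yields the claim.

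So the real work is the girth estimate, and I would carry it out by excluding girth $3$ and girth $4$ in turn. For girth $3$, a triangle forces the number $\lambda$ of common neighbors of adjacent vertices to satisfy $\lambda \geq 1$, and in the cubic case $b_1 = 2 - \lambda$. A parity count rules out $\lambda = 1$: each vertex lies on three edges, each such edge would then lie in exactly one triangle, yet every triangle through the vertex consumes two of its edges, forcing $3/2$ triangles through the vertex. Hence $\lambda = 2$, giving $b_1 = 0$, diameter one, and $\Gamma = K_4$ of order $4$.

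For girth $4$ we have $\lambda = 0$ (no triangle) and, from a quadrangle, $c_2 \geq 2$. Fixing a vertex $v$ and writing $\Gamma_i(v)$ for its distance layers, the three neighbors each send $b_1 = 2$ edges into $\Gamma_2(v)$, while each vertex of $\Gamma_2(v)$ sends $c_2$ edges back, so $|\Gamma_2(v)| = 6/c_2$. If $c_2 = 3$ then $|\Gamma_2(v)| = 2$ and no further layer exists, giving $\Gamma = K_{3,3}$ of order $6$. If $c_2 = 2$ then $|\Gamma_2(v)| = 3$, and the bipartite graph between $\Gamma_1(v)$ and $\Gamma_2(v)$ is $2$-regular on $3+3$ vertices, hence a single hexagon; consequently each pair of vertices in $\Gamma_2(v)$ shares exactly one neighbor in $\Gamma_1(v)$. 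Such a pair is at distance $2$ and must therefore have $c_2 = 2$ common neighbors, so the missing one lies in $\Gamma_3(v)$. Since each layer-$2$ vertex has only one forward edge, all three forward edges collapse onto a single vertex at distance $3$, forcing $\Gamma = Q_3$ of order $8$. In every case the order is $\leq 8$, so order $\geq 10$ leaves only $g(\Gamma) \geq 5$.

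The hard part is precisely the girth-$4$ case with $c_2 = 2$: this is the one spot where a purely local valency count does not suffice, and one has to use distance-regularity — that \emph{every} pair of vertices at distance $2$ has the same number $c_2$ of common neighbors — to force the forward edges out of $\Gamma_2(v)$ to meet in a single antipodal vertex. Once the girth bound is in place, the remainder is an immediate assembly of Theorem \ref{g5}, Lemma \ref{d2}, and the trivial distance-$0$ case.
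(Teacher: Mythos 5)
Your proposal is correct and follows the same overall route as the paper: establish $g(\Gamma)\geq 5$, then invoke Theorem \ref{g5} for the distance-one case and Lemma \ref{d2} for the distance-two case. The difference lies in how the girth bound is obtained. The paper simply asserts that every cubic distance-regular graph of order $\geq 10$ has $b_1=2$ and $c_2=1$ in its intersection array (a fact one can read off from the known classification of such graphs) and deduces girth $\geq 5$ from that; you instead prove the girth bound from scratch by an elementary counting argument, showing that girth $3$ forces $\lambda=2$ and $\Gamma=K_4$ (the parity count ruling out $\lambda=1$ is correct, since every triangle on an edge at $v$ passes through $v$ and uses two of its three edges), and that girth $4$ forces either $c_2=3$ and $\Gamma=K_{3,3}$ or $c_2=2$ and $\Gamma=Q_3$, so order $\leq 8$ in all cases. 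Your hexagon analysis in the $c_2=2$ subcase is sound: triangle-freeness makes the layer $\Gamma_2(v)$ independent (any edge inside it would create a triangle with the shared $\Gamma_1(v)$-neighbor coming from the $6$-cycle), so each of its three vertices has exactly one edge leaving $\Gamma_1(v)\cup\Gamma_2(v)$, and the $c_2=2$ condition applied to pairs inside $\Gamma_2(v)$ glues those edges to a single antipodal vertex. What your version buys is self-containedness — no appeal to the classification or to the table of intersection arrays is needed; what the paper's version buys is brevity. Both are valid proofs of the lemma.
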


\begin{proof}
For the intersection array, we have $b_1=2$ and $c_2=1$ for all cubic distance-regular graphs of order $\geq 10$. It follows that all those graphs have girth $\geq 5$, because we need adjacent vertices to have at least one common neighbor ($b_1+1 < k$ for a $k$-regular graph) to get a triangle and vertices in distance two to have at least two common neighbors ($c_2\geq 2$) to get a quadrangle in the graph. Because of Theorem \ref{g5} and Lemma \ref{d2}, we get $u_{ij} u_{kl} = u_{kl} u_{ij}$ for $d(i,k) = d(j,l) \leq 2$.
\end{proof}

In the following we study the quantum automorphism groups of the remaining eight cubic distance-transitive graphs and prove Theorem \ref{mainthm}. As a reminder we write the intersection array in parantheses to the graph. We always write $u_{ij}$, $1 \leq i,j \leq n$, for the generators of $C(\QBan(\Gamma))$. 

\subsection{The Heawood graph $(\{3,2,2;1,1,3\})$}
Since the Heawood graph has diameter three, we have $d(i,k), d(j,l) \leq 3$ for all $i,j,k,l \in V$. By Lemma \ref{g}, we know $u_{ij} u_{kl} = u_{kl} u_{ij}$ for $d(i,k) = d(j,l) \leq 2$. Because of Lemma \ref{dist}, it just remains to prove $u_{ij} u_{kl} = u_{kl} u_{ij}$ for $d(i,k) = d(j,l) = 3$, to get that the Heawood graph has no quantum symmetry. But this follows from Lemma \ref{ia} (a), because we have $c_2=1$, $c_3=3$ and $b_1+1=b_0$. 

\subsection{The Pappus graph $(\{3,2,2,1;1,1,2,3\})$}
The Pappus graph has diameter four, and thus we have $d(i,k), d(j,l) \leq 4$ for all $i,j,k,l \in V$. We know $u_{ij} u_{kl} = u_{kl} u_{ij}$ for $d(i,k) = d(j,l) \leq 2$ because of Lemma \ref{g}. It holds $c_3 =2, c_4 =3$, $b_1+1=b_0$ and we can use Lemma \ref{ia} (a) two times to get $u_{ij} u_{kl} = u_{kl} u_{ij}$ for $3 \leq d(i,k) = d(j,l) \leq 4$. Using Lemma \ref{dist}, we conclude that the Pappus graph has no quantum symmetry. 

\subsection{The Desargues graph $(\{3,2,2,1,1;1,1,2,2,3\})$}
The Desargues graph has diameter five. Therefore we have $d(i,k), d(j,l) \leq 5$ for all $i,j,k,l \in V$. We know $u_{ij} u_{kl} = u_{kl} u_{ij}$ for $d(i,k) = d(j,l) \leq 2$ by Lemma \ref{g}. Using Lemma $\ref{dist}$, it remains to show $u_{ij} u_{kl} = u_{kl} u_{ij}$ for $3 \leq d(i,k) = d(j,l) \leq 5$. This follows from applying Lemma \ref{ia} (a) three times, since $c_3=2$, $c_4=2$, $c_5=3$ and $b_1+1=b_0$. Thus, the Desargues graph has no quantum symmetry.\\

To deal with more graphs we need an additional lemma. 

\begin{lem}\label{cubic}
Let $\Gamma$ be a cubic distance regular graph. If we know that $u_{ij}u_{kl} = u_{kl} u_{ij}$ for $d(i,k)=d(j,l) \leq m-1$ and it either holds 
\begin{itemize}
\item[(i)] $b_{m-1}=1$ or 
\item[(ii)] $b_{m-1}=2$ and $b_m=c_m=1$, girth $g(\Gamma)\geq 2m$,
\end{itemize}
then we get $u_{ij}u_{kl} = u_{kl} u_{ij}$ for $d(i,k)=d(j,l) = m$.
\end{lem}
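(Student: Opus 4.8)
The plan is to establish $u_{ij}u_{kl} = u_{ij}u_{kl}u_{ij}$ for $d(i,k)=d(j,l)=m$ and then invoke Lemma \ref{com}. By Lemma \ref{dist} and Relation \eqref{QA2} one starts from $u_{ij}u_{kl}=u_{ij}u_{kl}\sum_{p;\,d(l,p)=m}u_{ip}$, so it suffices to kill every term with $p\neq j$. Since $c_m\geq 1$, the vertex $j$ has a neighbor $q$ with $d(q,l)=m-1$; fix one such $q$. The standing hypothesis that $u_{ab}u_{cd}=u_{cd}u_{ab}$ whenever $d(a,c)=d(b,d)\le m-1$ is exactly what is needed to apply Lemma \ref{abk} with $s=1$ and $t=m-1$ (here $d(a,k)=d(q,l)=m-1$), which refines the identity to $u_{ij}u_{kl}=u_{ij}u_{kl}\sum_{p}u_{ip}$, the sum now running only over the neighbors $p$ of $q$ with $d(p,l)=m$, i.e. over the $b_{m-1}$ neighbors of $q$ lying one step further from $l$.

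In case (i), $b_{m-1}=1$: the vertex $q$ has a single neighbor at distance $m$ from $l$, and since $j$ is such a neighbor the sum collapses to $u_{ij}$. Thus $u_{ij}u_{kl}=u_{ij}u_{kl}u_{ij}$ and Lemma \ref{com} finishes this case at once, with no use of the girth or of $b_m,c_m$.

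In case (ii), $b_{m-1}=2$, so the sum has two terms, $u_{ij}u_{kl}=u_{ij}u_{kl}(u_{ij}+u_{ip_0})$, where $p_0$ is the second neighbor of $q$ at distance $m$ from $l$. Here $c_m=1$ (hence $c_i=1$ for all $i\le m$, as the $c_i$ are non-decreasing) forces $q$ to be the unique neighbor of both $j$ and $p_0$ pointing toward $l$, and the girth bound forbids a triangle on $\{j,q,p_0\}$, so $d(j,p_0)=2$. It remains to prove $u_{ij}u_{kl}u_{ip_0}=0$. For this I would apply Lemma \ref{krit} with $p=p_0$ and auxiliary vertex $q'=z$, where $z$ is the neighbor of $j$ with $d(z,l)=m+1$ (unique by $b_m=1$); then $s=m+1$, and $d(j,z)=1\neq d(z,p_0)$ since $z\sim p_0$ would make $z$ and $q$ two common neighbors of $j$ and $p_0$, impossible as $c_2=1$. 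Lemma \ref{krit} then gives $u_{ij}u_{kl}u_{ip_0}=0$, provided $l$ is the \emph{only} vertex at distance $m$ from both $j$ and $p_0$ and at distance $m+1$ from $z$, after which Lemma \ref{com} concludes.

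The main obstacle is precisely this last uniqueness statement in case (ii). I would verify it by a cycle-counting argument: the unique geodesics from a candidate vertex $t$ to $j$ and to $p_0$ (unique because $c_i=1$ for $i\le m$), together with the length-$2$ path $j,q,p_0$, bound a cycle whose length is governed by how far the two geodesics branch, and the bound $g(\Gamma)\ge 2m$ forces them to branch only at $t$, while $b_m=c_m=1$ pins down the continuation past distance $m$ uniquely, leaving $l$ as the sole possibility. Turning this into a clean ruling-out of every stray $t\neq l$ using only the girth and the two intersection numbers $b_m,c_m$ is where the genuine work lies; everything else is the routine application of Lemmas \ref{abk}, \ref{krit} and \ref{com} assembled above.
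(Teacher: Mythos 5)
Your case (i) and your reduction in case (ii) to the two--term identity $u_{ij}u_{kl}=u_{ij}u_{kl}(u_{ij}+u_{ip_0})$ coincide with the paper's argument. The problem is how you try to kill $u_{ij}u_{kl}u_{ip_0}$. Your route through Lemma \ref{krit} with the auxiliary vertex $z$ hinges on the claim that $l$ is the \emph{only} vertex at distance $m$ from both $j$ and $p_0$ and at distance $m+1$ from $z$, and you explicitly leave this unproven. That uniqueness statement is the entire content of case (ii), not a routine verification: the set of vertices at distance $m$ from both $j$ and $p_0$ can be large (already in the Coxeter graph, the $m=3$ instance, there are many candidates), the extra condition $d(t,z)=m+1$ is not obviously enough to single out $l$, and the closed walk formed by the two unique geodesics from a stray $t$ together with the path $j,q,p_0$ has length $2m+2$, which does not violate the girth bound $g(\Gamma)\geq 2m$. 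So the sketched cycle-counting argument does not close; as written, case (ii) is a genuine gap.

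The paper avoids this uniqueness question entirely by a different insertion. Let $s$ be the unique neighbor of $k$ at distance $m$ from $i$, and let $a$, $b$ be the unique neighbors of $l$ at distance $m$ from $j$ and from $p_0$ respectively (all unique because the graph is cubic and $b_m=c_m=1$). Relations \eqref{QA2}, \eqref{QA3} together with Lemma \ref{dist} give $u_{ij}u_{kl}u_{ip_0}=u_{ij}u_{sa}u_{kl}u_{sb}u_{ip_0}$, and since $d(s,k)=d(a,l)=d(b,l)=1\leq m-1$ the hypothesis lets one commute $u_{kl}$ past $u_{sb}$ to obtain $u_{ij}u_{sa}u_{sb}u_{kl}u_{ip_0}$. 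The only combinatorial input is then $a\neq b$, which forces $u_{sa}u_{sb}=0$ by orthogonality of projections in a row; and $a\neq b$ follows from a three-line contradiction: if $a=b$ then $d(a,j)=d(a,p_0)=m$ and the girth bound forces $d(a,t)=m$ for the common neighbor $t$ of $j$ and $p_0$ on the $l$-side, so $t$ would have two neighbors at distance $m$ from $a$, contradicting $b_m=c_m=1$. You should replace your Lemma \ref{krit} step with this mechanism (or prove the uniqueness claim you rely on, which appears to be strictly harder).
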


\begin{proof}
 Let $d(i,k) = d(j,l) =m$. Let $t$ be a neighbor of $j$ in distance $m-1$ to $l$. Since we know that  $u_{ac} u_{bd} = u_{bd} u_{ac}$ for $d(a,b)= d(c,d) = m-1$, we get\allowdisplaybreaks
\begin{align*}
 u_{ij}u_{kl} = u_{ij}u_{kl} \sum_{\substack{p; d(l,p) =m,\\(t,p) \in E}} u_{ip},
\end{align*}
by Lemma \ref{abk}.

For (i), we have $b_{m-1} =1$ and we deduce that $j$ is the only neighbor of $t$ with $d(l,j)=m$, since $d(t,l)=m-1$. Therefore
\begin{align*}
 u_{ij}u_{kl} = u_{ij}u_{kl} \sum_{\substack{p; d(l,p) =m,\\(t,p) \in E}} u_{ip} = u_{ij} u_{kl} u_{ij},
\end{align*}
and Lemma \ref{com} yields $u_{ij} u_{kl} = u_{kl} u_{ij}$ for $d(i,k)=d(j,l) =m$. 

Regarding (ii), we have $b_{m-1} =2$. Thus there are two neighbors of $t$ with $d(l,j)=m$, since $d(t,l)=m-1$. Those are $j$ and another vertex $q$. Therefore
\begin{align*}
 u_{ij}u_{kl} = u_{ij}u_{kl} \sum_{\substack{p; d(l,p) =m,\\(t,p) \in E}} u_{ip} = u_{ij} u_{kl} (u_{ij} + u_{jq}).
\end{align*}
It holds $b_m=c_m=1$ and since $\Gamma$ is a cubic graph, this implies that there is exactly one neighbor, say $s$, of $k$ at distance $m$ to $i$. Similarly, we have neighbors $a$, $b$ of $l$ at distance $m$ to $j$, $q$ respectively. We deduce 
\begin{align*}
u_{ij}u_{kl}u_{iq} = u_{ij}u_{sa}u_{kl}u_{sb}u_{iq} = u_{ij}u_{sa}u_{sb}u_{kl}u_{iq} 
\end{align*}
by Relations \eqref{QA2}, \eqref{QA3} and since $u_{kl}u_{sb} = u_{sb}u_{kl}$. Assume $a=b$. Then we know $d(a,j)=m=d(a,q)$. We also have $d(a,t) =m$, since we know $d(l,t)=m-1$, $(l,a) \in E$ and $\Gamma$ has girth $g(\Gamma) \geq 2m$. But then $t$ has two neighbors at distance $m$ to $a$, namely $j$ and $q$. This contradicts the fact that there is exactly one neighbor of $t$ at distance $m$ to $a$. This yields $a \neq b$ and therefore
\begin{align*}
u_{ij}u_{kl}u_{iq}=u_{ij}u_{sa}u_{sb}u_{kl}u_{iq} =0.
\end{align*}
Summarizing, we get 
\begin{align*}
 u_{ij}u_{kl} = u_{ij}u_{kl} \sum_{\substack{p; d(l,p) =3,\\(t,p) \in E}} u_{ip} = u_{ij} u_{kl} (u_{ij} + u_{jq}) =u_{ij} u_{kl} u_{ij}
\end{align*}
and now Lemma \ref{com} yields $u_{ij} u_{kl} = u_{kl} u_{ij}$ for $d(i,k)=d(j,l) =m$.
\end{proof}

\subsection{The Dodecahedron $(\{3,2,1,1,1;1,1,1,2,3\})$}
The Dodecahedron has diameter five, thus we have $d(i,k),d(j,l) \leq 5$ for all $i,j,k,l \in V$. Lemma \ref{g} yields $u_{ij}u_{kl}=u_{kl}u_{ij}$ for $d(i,k)=d(j,l)\leq 2$. We get $u_{ij} u_{kl} = u_{kl} u_{ij}$ for $d(i,k)=d(j,l) =3$ by Lemma \ref{cubic} (i), since the Dodecahedron is a cubic distance regular graph with $b_2=1$.
Now, since we have $c_4 =2$, $c_5=3$ and $b_1+1=b_0$, we can use Lemma \ref{ia} (a) two times to get $u_{ij} u_{kl} = u_{kl} u_{ij}$ for $4 \leq d(i,k)=d(j,l) \leq 5$. We conclude that the Dodecahedron has no quantum symmetry by Lemma \ref{dist}. 

\subsection{The Coxeter graph $(\{3,2,2,1;1,1,1,2\})$}
Since the Coxeter graph has diameter four, we have $d(i,k),d(j,l) \leq 4$ for all $i,j,k,l \in V$. By Lemma \ref{g}, we know $u_{ij}u_{kl}=u_{kl}u_{ij}$ for $d(i,k)=d(j,l)\leq 2$. We have $b_2 =2, b_3 =1, c_3 =1$ in the intersection array of the Coxeter graph, where the Coxeter graph has girth $7$. Thus, we can use Lemma \ref{cubic} (ii) to get $u_{ij} u_{kl} = u_{kl} u_{ij}$ for $d(i,k)=d(j,l) =3$. We obtain $u_{ij} u_{kl} = u_{kl} u_{ij}$ for $d(i,k)=d(j,l) =4$ by Lemma \ref{ia} (a), since we have $c_4=2$ and $b_1+1=b_0$. Then Lemma \ref{dist} yields that the Coxeter graph has no quantum symmetry.\\

We give the following technical lemma because it applies to the three remaining graphs. 

\begin{lem}\label{d3}
Let $\Gamma$ be a cubic distance regular graph of order $\geq 10$ with $b_2=2$, $g(\Gamma)\geq 7$ and let $d(i,k)=d(j,l)=3$. Then
\begin{align*}
u_{ij}u_{kl}=u_{ij}u_{kl}(u_{ij} + u_{iq}),
\end{align*}
where $q$ is the unique vertex adjacent to the neighbor $x$ of $j$, $d(x,l)=2$ with $d(q,j)=2$ and $d(l,q)=3$.
\end{lem}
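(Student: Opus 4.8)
The plan is to realize Lemma \ref{d3} as a single application of Lemma \ref{abk} at level $m=3$, fed by the distance-$\leq 2$ commutativity that Lemma \ref{g} already supplies, and then to read off which summands survive directly from the intersection array. First I would record the input: since $\Gamma$ is a cubic distance-regular graph of order $\geq 10$, Lemma \ref{g} gives $u_{ac}u_{bd}=u_{bd}u_{ac}$ whenever $d(a,b)=d(c,d)\leq 2$. In particular this holds for $d(a,b)=d(c,d)=2$, which is exactly the commutation hypothesis that Lemma \ref{abk} requires.

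Next I would choose the auxiliary vertex. Because $d(j,l)=3$, a shortest path from $j$ to $l$ furnishes a neighbor $x$ of $j$ with $d(x,l)=2$; I fix one such $x$ (for the three graphs to which this lemma will be applied, $c_3=1$, so $x$ is in fact unique, matching the phrasing of the statement). Viewing $x$ as the auxiliary vertex in Lemma \ref{abk} with $s=d(j,x)=1$ and $t=d(x,l)=2$, the hypothesis of that lemma asks for $u_{kl}u_{ax}=u_{ax}u_{kl}$ for all $a$ with $d(a,k)=2$; since then $d(a,k)=d(x,l)=2$, this is precisely what the previous paragraph grants. Lemma \ref{abk} therefore yields
\[
u_{ij}u_{kl}=u_{ij}u_{kl}\sum_{\substack{p;\,d(l,p)=3,\\(x,p)\in E}}u_{ip}.
\]

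It remains to identify the neighbors of $x$ at distance $3$ from $l$. Since $x$ lies at distance $2$ from $l$, the definition of the intersection array gives exactly $b_2=2$ such neighbors. One of them is $j$ itself, because $(x,j)\in E$ and $d(j,l)=3$; the other is the remaining vertex, which is the $q$ named in the statement. That $d(q,j)=2$ follows because $j$ and $q$ are distinct neighbors of the common vertex $x$, hence nonadjacent (a triangle is excluded by $g(\Gamma)\geq 7$) while sharing the neighbor $x$, so $d(j,q)=2$ and $q$ is uniquely determined. Substituting the two surviving summands gives $u_{ij}u_{kl}=u_{ij}u_{kl}(u_{ij}+u_{iq})$, as claimed.

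The argument is essentially bookkeeping, and I do not expect a genuine obstacle. The only point demanding care is the neighbor count: one must use $b_2=2$ to guarantee that $j$ and $q$ are the \emph{only} neighbors of $x$ at distance $3$ from $l$, and the girth bound $g(\Gamma)\geq 7$ to pin down $d(q,j)=2$ and the well-definedness of $q$. The hypotheses $b_2=2$ and $g(\Gamma)\geq 7$ are exactly what make the sum collapse to two terms; the lemma is thus a packaging step that isolates the expression $u_{ij}u_{kl}(u_{ij}+u_{iq})$, so that each of the Tutte $8$-cage, the Foster graph and the Biggs--Smith graph can subsequently be finished by showing the residual term $u_{ij}u_{kl}u_{iq}$ vanishes and then invoking Lemma \ref{com}.
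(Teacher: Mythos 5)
Your proposal is correct and follows essentially the same route as the paper: both apply Lemma \ref{abk} with the auxiliary vertex $x$ (a neighbor of $j$ at distance two from $l$), using the distance-$\leq 2$ commutativity from Lemma \ref{g} to satisfy its hypothesis, and then use $b_2=2$ to see that $j$ and $q$ are the only neighbors of $x$ at distance three from $l$. The only cosmetic difference is that the paper derives the uniqueness of $x$ directly from the girth hypothesis $g(\Gamma)\geq 7$ rather than from $c_3=1$ in the intended applications, but this does not affect the argument.
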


\begin{proof}
Let $d(i,k)=d(j,l)=3$.  
Let $x$ be the unique vertex with $(j,x) \in E, d(x,l) =2$. It is unique because we assumed $g(\Gamma)\geq 7$. By Lemma \ref{g}, we get $u_{kl}u_{yx}=u_{yx}u_{kl}$ for all $y \in V$ with $d(k,y)=2$. We obtain
 \begin{align*}
 u_{ij}u_{kl} = u_{ij}u_{kl} \sum_{\substack{p;(x,p) \in E,\\d(p,l)=3}} u_{ip} 
 \end{align*}
 by Lemma \ref{abk}. We conclude 
 \begin{align*}
 u_{ij}u_{kl}= u_{ij} u_{kl} (u_{ij} + u_{iq}),
 \end{align*}
 because $x$ has three neighbors where two of them are at distance three to $l$, since $b_2=2$. 
\end{proof}

\subsection{The Tutte $8$-cage $(\{3,2,2,2;1,1,1,3\})$}
The Tutte $8$-cage has diameter four, thus we have $d(i,k),d(j,l) \leq 4$ for $i,j,k,l \in V$. Lemma \ref{g} yields $u_{ij}u_{kl}=u_{kl}u_{ij}$ for $d(i,k)=d(j,l) \leq 2$.

Let $d(i,k)=d(j,l)=4$. The Tutte $8$-cage is the incidence graph of the Cremona-Richmond configuration, see \cite{Coxeter}. Therefore we can label one of the maximal independent sets by unordered $2$-subsets of $\{1, \dots, 6\}$, where vertices at distance two to the vertex $\{a,b\}$ are exactly those corresponding to a $2$-subset that does not contain $a$ or $b$. The remaining vertices in the maximal independent set are exactly the vertices in distance four to $\{a,b\}$. Using this labelling we write $j=\{1,2\}, l=\{1,3\}$ and show that 
\begin{align*}
u_{i\{1,2\}}u_{k \{1,3\}} = u_{k \{1,3\}} u_{i \{1,2\}}.
\end{align*}
This suffices to get $u_{ij}u_{kl}=u_{kl}u_{ij}$ for $d(i,k)=d(j,l)=4$ by Lemma \ref{disttrans}, because the Tutte $8$-cage is distance-transitive.\\

\noindent\emph{Step 1: We have $u_{i\{1,2\}}u_{k\{1,3\}}=u_{i\{1,2\}}u_{k\{1,3\}}(u_{i\{1,2\}}+ u_{i\{2,3\}})$.}\\
There are three vertices $t_a$, $a\in \{1,2,3\}$,  such that $d(j,t_a)=d(t_a,l)=2$, because we have $c_4=3$ and $c_3=1$. Since we know $u_{st_a}u_{kl} = u_{kl}u_{st_a}$ by Lemma \ref{d2}, we get
\begin{align*}
u_{ij}u_{kl} =u_{ij}u_{kl}\left(\sum_{\substack{p;d(p,l)=4,\\d(p,t_a)=2}}u_{ip}\right)
 \end{align*}
by Lemma \ref{abk}. 
Using this, we obtain \allowdisplaybreaks
\begin{align}\label{sameneighbors}
u_{ij}u_{kl} &=u_{ij}u_{kl}\left(\sum_{\substack{p;d(p,l)=4,\\d(p,t_1)=2}}u_{ip}\right)\left(\sum_{\substack{p;d(p,l)=4,\\d(p,t_2)=2}}u_{ip}\right)\left(\sum_{\substack{p;d(p,l)=4,\\d(p,t_3)=2}}u_{ip}\right)\nonumber\\
&=u_{ij}u_{kl}\left(\sum_{\substack{p;d(p,l)=4,\\d(p,t_a)=2, \,a=1,2,3}}u_{ip}\right).
\end{align}
The vertices in distance two to $\{1,2\}$ and $\{1,3\}$ are $t_1=\{4,5\}$, $t_2=\{4,6\}$ and $t_3= \{5,6\}$. Looking at Equation \eqref{sameneighbors}, we only have to consider vertices that are in distance two to those three vertices. The only $2$-subset of $\{1,\dots,6\}$ besides $\{1,2\}$ and $\{1,3\}$ that does not contain $4,5$ or $6$ is $\{2,3\}$. Thus we get
\begin{align*}
u_{i\{1,2\}}u_{k\{1,3\}}= u_{i\{1,2\}}u_{k\{1,3\}}(u_{i\{1,2\}} + u_{i\{2,3\}}).
\end{align*}

\noindent\emph{Step 2: We have $u_{i\{1,2\}}u_{k\{1,3\}} u_{i\{2,3\}}=0.$}\\
Using Relations \eqref{QA2} and \eqref{QA4}, we obtain
\begin{align*}
u_{i\{1,2\}}u_{k\{1,3\}}u_{i\{2,3\}} = u_{i\{1,2\}}\left(\sum_{\substack{s;d(s,i)=2,\\d(k,s)=4}}u_{s\{3,4\}}\right)u_{k\{1,3\}} \left(\sum_{\substack{t;d(t,i)=2,\\d(k,t)=4}}u_{t\{1,5\}}\right)u_{i\{2,3\}}.
\end{align*}
The vertex $\{1,3\}$ is the only one in distance four to $\{1,2\},\{2,3\},\{3,4\}$ and $\{1,5\}$, because the only pair of numbers where at least one of them is contained in those subsets are $1$ and $3$. Let $q \neq \{1,3\}$. If $d(q,\{1,5\}) \neq 4$ or $d(q,\{3,4\})\neq 4$, then 
\begin{align*}
 u_{i\{1,2\}}\left(\sum_{\substack{s;d(s,i)=2,\\d(k,s)=4}}u_{s\{3,4\}}\right)u_{kq} \left(\sum_{\substack{t;d(t,i)=2,\\d(k,t)=4}}u_{t\{1,5\}}\right)u_{i\{2,3\}} =0,
\end{align*}
by Lemma \ref{dist}. If we have $d(q,\{1,5\}) =4$ and $d(q,\{3,4\})= 4$, but \linebreak$d(q,\{1,2\})\neq 4$, we get 
\begin{align*}
 u_{i\{1,2\}}&\left(\sum_{\substack{s;d(s,i)=2,\\d(k,s)=4}}u_{s\{3,4\}}\right)u_{kq} \left(\sum_{\substack{t;d(t,i)=2,\\d(k,t)=4}}u_{t\{1,5\}}\right)u_{i\{2,3\}}\\ 
 &= u_{i\{1,2\}}u_{kq} \left(\sum_{\substack{t;d(t,i)=2,\\d(k,t)=4}}u_{t\{1,5\}}\right)u_{i\{2,3\}}=0,
\end{align*}
by using Relations \eqref{QA2}, \eqref{QA4} and Lemma \ref{dist}. A similar argument shows
\begin{align*}
u_{i\{1,2\}}\left(\sum_{\substack{s;d(s,i)=2,\\d(k,s)=4}}u_{s\{3,4\}}\right)u_{kq} \left(\sum_{\substack{t;d(t,i)=2,\\d(k,t)=4}}u_{t\{1,5\}}\right)u_{i\{2,3\}} =0
\end{align*}
for $q$ with $d(q,\{1,5\}) =4$, $d(q,\{3,4\})= 4$ and $d(q,\{2,3\}) \neq 4$.
This yields 
\begin{align*}
u_{i\{1,2\}}u_{k\{1,3\}}u_{i\{2,3\}} &= u_{i\{1,2\}}\left(\sum_{\substack{s;d(s,i)=2,\\d(k,s)=4}}u_{s\{3,4\}}\right)u_{k\{1,3\}} \left(\sum_{\substack{t;d(t,i)=2,\\d(k,t)=4}}u_{t\{1,5\}}\right)u_{i\{2,3\}}\\
&= u_{i\{1,2\}}\left(\sum_{\substack{s;d(s,i)=2,\\d(k,s)=4}}u_{s\{3,4\}}\right)\left(\sum_a u_{ka}\right) \left(\sum_{\substack{t;d(t,i)=2,\\d(k,t)=4}}u_{t\{1,5\}}\right)u_{i\{2,3\}}\\
&= u_{i\{1,2\}}\left(\sum_{\substack{s;d(s,i)=2,\\d(k,s)=4}}u_{s\{3,4\}}\right)\left(\sum_{\substack{t;d(t,i)=2,\\d(k,t)=4}}u_{t\{1,5\}}\right)u_{i\{2,3\}}.
\end{align*}
Since $d(\{1,2\},\{3,4\})=d(\{2,3\},\{1,5\})=2$,  we know that $u_{i\{1,2\}}$ commutes with $u_{s\{3,4\}}$ and $u_{i\{2,3\}}$ commutes with $u_{t\{1,5\}}$ by Lemma \ref{d2}. We deduce
\begin{align*}\allowdisplaybreaks
u_{i\{1,2\}}u_{k\{1,3\}}u_{i\{2,3\}} &=u_{i\{1,2\}}\left(\sum_{\substack{s;d(s,i)=2,\\d(k,s)=4}}u_{s\{3,4\}}\right)\left(\sum_{\substack{t;d(t,i)=2,\\d(k,t)=4}}u_{t\{1,5\}}\right)u_{i\{2,3\}}\\
&=\left(\sum_{\substack{s;d(s,i)=2,\\d(k,s)=4}}u_{s\{3,4\}}\right)u_{i\{1,2\}}u_{i\{2,3\}}\left(\sum_{\substack{t;d(t,i)=2,\\d(k,t)=4}}u_{t\{1,5\}}\right)\\
&=0,
\end{align*}
since $u_{i\{1,2\}}u_{i\{2,3\}}=0$.\\

\noindent\emph{Step 3: It holds $u_{i\{1,2\}}u_{k\{1,3\}}= u_{k \{1,3\}} u_{i\{1,2\}}.$}\\
Using \emph{Step 1} and \emph{Step 2}, we obtain $u_{i\{1,2\}}u_{k \{1,3\}} = u_{i\{1,2\}}u_{k \{1,3\}} u_{i \{1,2\}}$. By Lemma \ref{com}, we get that  $u_{i\{1,2\}}$ and $u_{k \{1,3\}}$ commute. \\

\noindent\emph{Step 4: It holds $u_{ij}u_{kl}= u_{kl} u_{ij}$ for $d(i,k)=d(j,l)=3$.}\\
Let $d(i,k)=d(j,l)=3$. We prove that $u_{ij}u_{kl} = u_{kl}u_{ij}$ for $d(i,k)=d(j,l)=3$. Let $x$ be the unique vertex adjacent to $j$ and in distance two to $l$. This vertex is unique because the Tutte $8$-cage has girth eight. By Lemma \ref{d3}, we get 
\begin{align*}
u_{ij}u_{kl}=u_{ij}u_{kl}(u_{ij} + u_{iq}),
\end{align*}
where $q$ is the unique vertex adjacent to $x$ with $d(q,j)=2, d(l,q)=3$. Take a neighbor $t$ of $j$ at distance four to $l$. Then we have
\begin{align*}
u_{ij}u_{kl}u_{iq}= u_{ij} \left(\sum_{\substack{s;(s,i) \in E,\\d(k,s)=4}}u_{st}\right)u_{kl}u_{iq} = u_{ij} u_{kl}\left(\sum_{\substack{s;(s,i) \in E,\\d(k,s)=4}}u_{st}\right)u_{iq}, 
\end{align*}
because of Relations \eqref{QA2}, \eqref{QA3} and $u_{kl}u_{st} = u_{st}u_{kl}$ for all such $s$ since $d(t,l)=d(s,k)=4$. Assume that $t$ is connected to $q$. Then $j$ and $q$ have two common neighbors, $x$ and $t$, where we know that $x\neq t$ because we have $d(x,l)=2$ whereas $d(t,l)=4$. But then we get the quadrangle $j,x,q,t,j$ and this contradicts the fact that the Tutte $8$-cage has girth eight. Thus, $t$ and $q$ are not adjacent. We deduce 
\begin{align*}
u_{ij}u_{kl}u_{iq}= u_{ij} u_{kl}\left(\sum_{\substack{s;(s,i) \in E,\\d(k,s)=4}}u_{st}\right)u_{iq}=0
\end{align*}
by Relation \eqref{QA3}. Thus we get $u_{ij}u_{kl} = u_{ij}u_{kl}u_{ij}$ and we obtain $u_{ij}u_{kl}=u_{kl}u_{ij}$ for $d(i,k)=d(j,l)=3$ by Lemma \ref{com}. 

Summarizing, we have $u_{ij}u_{kl}=u_{kl}u_{ij}$ for $d(i,k)=d(j,l)\leq 4$. Using Lemma \ref{dist}, we conclude that the Tutte $8$-cage has no quantum symmetry. 

\subsection{The Foster graph $(\{3,2,2,2,2,1,1,1;1,1,1,1,2,2,2,3\})$}
The Foster graph has diameter eight. Therefore, we have $d(i,k), d(j,l) \leq 8$ for $i,j,k,l \in V$. By Lemma \ref{g}, we know $u_{ij}u_{kl}=u_{kl}u_{ij}$ for $d(i,k)=d(j,l)\leq 2$. 

 Let $d(i,k)=3$, $d(j,l)=3$. We want to show $u_{ij}u_{kl}=u_{kl}u_{ij}$.\\
 
\noindent\emph{Step 1: It holds $u_{ij}u_{kl}=u_{ij}u_{kl}(u_{ij} + u_{iq})$, where $q$ is the unique vertex adjacent to the neighbor $x$ of $j$, $d(x,l)=2$ with $d(q,j)=2, d(l,q)=3$.}\\
The Foster graph has girth ten. Thus, by Lemma \ref{d3}, we get
\begin{align*}
u_{ij}u_{kl}=u_{ij}u_{kl}(u_{ij} + u_{iq}),
\end{align*}
for $q$ as above.\\

\noindent\emph{Step 2: It holds $u_{ij}u_{kl}u_{iq} = u_{ij}\left(\sum\limits_{\substack{s;(s,k) \in E,\\ d(s,i)=4}}u_{sz}\right)u_{iq}$ for $z\in V$ with $(z,l) \in E$ and $d(z,j)=4$.}\\
Take $z$ with $(z,l) \in E, d(z,j)=4$. Using Relations \eqref{QA2} and \eqref{QA4}, we obtain
\begin{align*}
u_{ij}u_{kl}u_{iq} = u_{ij}\left(\sum_{\substack{s;(s,k) \in E,\\ d(s,i)=4}}u_{sz}\right)u_{kl}u_{iq}.
\end{align*}
We know $(z,l)\in E$ and $d(l,q)=3$. Since we have $b_3=2$ and $c_3=1$, it either holds $d(z,q)=2$ or $d(z,q)=4$. Assume $d(z,q)=2$. Then we get a cycle of length $\leq 6$, since we have $d(z,q)=2$, $d(q,l)=3$ and $(z,l) \in E$. But this contradicts the fact that the Foster graph has girth ten and we conclude $d(z,q)=4$. It holds $c_4=1$, thus $l$ is the only neighbor of $z$ at distance three to $q$. This yields
\begin{align*}\allowdisplaybreaks
u_{ij}\left(\sum_{\substack{s;(s,k) \in E,\\ d(s,i)=4}}u_{sz}\right)u_{kl}u_{iq} &= u_{ij}\left(\sum_{\substack{s;(s,k) \in E,\\ d(s,i)=4}}u_{sz}\right)\left(\sum_{\substack{a;(a,z) \in E,\\d(a,q)=3}}u_{ka}\right)u_{iq}\\
 &= u_{ij}\left(\sum_{\substack{s;(s,k) \in E,\\ d(s,i)=4}}u_{sz}\right)\left(\sum_{a}u_{ka}\right)u_{iq}\\
 &= u_{ij}\left(\sum_{\substack{s;(s,k) \in E,\\ d(s,i)=4}}u_{sz}\right)u_{iq},
\end{align*}
by using Lemma \ref{dist} and Relations \eqref{QA2}, \eqref{QA3}.\\

\noindent\emph{Step 3: It holds $u_{ij}u_{sz}u_{iq}=0$ for $s \in V$ with $(s,k) \in E$ and $d(i,s) =4$.}\\
For every such $s$, take $t$ with $d(i,t)=4, d(s,t)=2$ (exists because for a neighbor of $s$ at distance five to $i$ there is a neighbor $t \neq s$ with $d(i,t)=4$, since $c_5=2$). We get
\begin{align}\label{eqd3}
u_{ij}u_{sz}u_{iq}= u_{ij} \left(\sum_{\substack{p;d(p,z)=2,\\d(p,j)=4}}u_{tp}\right)u_{sz}u_{iq} = u_{ij} u_{sz}\left(\sum_{\substack{p;d(p,z)=2,\\d(p,j)=4}}u_{tp}\right)u_{iq}
\end{align}
by Lemma \ref{dist} and because we have $u_{tp}u_{sz}= u_{sz}u_{tp}$ by Lemma \ref{d2}. 
The Foster graph has girth ten, therefore there is exactly one neighbor of $j$ at distance two to $l$. This is the vertex $x$ from \emph{Step 1}. We know that $q$ is also a neighbor of $x$. Take $p$ with $d(p,z)=2$, $d(p,j)=4$. We want to show $d(p,q)\neq 4$. For this, we assume $d(p,q) = 4$ and prove that then $x$ has three neighbors in distance four to $p$, contradicting $c_5=2$. We have $d(p,j)=4$ by the choice of $p$. Let $y$ be the third neighbor of $x$.  We have $d(x,l)=2$ and know $d(j,l)=d(q,l)=3$, therefore the remaining neighbor $y$ of $x$ has to be adjacent to $l$ as otherwise $d(x,l) \neq 2$. We also have $(z,l) \in E$ and $d(z,p)=2$, where we know $z \neq y$ since $d(x,j)=2$, but $d(z,j)=4$. It holds $d(l,p)=3$ and we have $c_3=1$, $b_3=2$. Thus, $l$ has one neighbor at distance two to $p$ and two neighbors at distance four to $p$. Since we know that $z$ is the neighbor of $l$ with $d(z,p)=2$, we conclude $d(y,p)=4$ because $y$ is another neighbor of $l$. Thus $x$ has the three neighbors $j, q$ and $y$ in distance four to $p$ contradicting $c_5=2$. We conclude $d(p,q)\neq 4$ for all $p$ with $d(p,z)=2$, $d(p,j)=4$. By Lemma \ref{dist}, we deduce 
\begin{align*}
u_{ij}u_{sz}u_{iq}=u_{ij} u_{sz}\left(\sum_{\substack{p;d(p,z)=2,\\d(p,j)=4}}u_{tp}\right)u_{iq}=0
\end{align*}
for all $s$ with $(s,k) \in E, d(s,i)=4$. \\

\noindent\emph{Step 4: It holds $u_{ij}u_{kl}=u_{kl}u_{ij}$ for $d(i,k)=d(j,l)=3$.}\\
The Steps \emph{2} and \emph{3} yield
\begin{align*}
u_{ij}u_{kl}u_{iq}=u_{ij}\left(\sum_{\substack{s;(s,k) \in E,\\ d(s,i)=4}}u_{sz}\right)u_{iq}=0. 
\end{align*}
Using \emph{Step 1}, we get $u_{ij}u_{kl}=u_{ij}u_{kl}u_{ij}$ for $d(i,k)=d(j,l)=3$ and we obtain $u_{ij}u_{kl}=u_{kl}u_{ij}$ by Lemma \ref{com}.\\
 
 \noindent\emph{Step 5: We have $u_{ij}u_{kl}=u_{kl}u_{ij}$ for $4 \leq d(i,k)=d(j,l) \leq 8$.}\\
Let $d(i,k)=d(j,l)=4$. There is exactly one $a$ with $(a,i) \in E$, $d(k,a)=3$, and exactly one $b$ with $(b,j) \in E$, $d(l,b)=3$ since $c_4=1$. It holds
 \begin{align*}
 u_{ij}u_{kl} = u_{ij}u_{kl}\left(\sum_{\substack{p;d(p,l)=4,\\(b,p)\in E}}u_{ip}\right)
 \end{align*}
 by Lemma \ref{abk} since $u_{kl}u_{ab} =u_{ab}u_{kl}$. There are exactly two vertices adjacent to $b$ and at distance four to $l$, since $d(b,l)=3$ and $b_3=2$. One of them is $j$, so we get
 \begin{align*}
 u_{ij}u_{kl}= u_{ij}u_{kl}(u_{ij}+u_{iq}),
 \end{align*}
 where $q$ is the other neighbor of $b$ in distance four to $l$. We have 
 \begin{align*}
 u_{ij}u_{kl}u_{iq} = u_{ij}\left(\sum_{\substack{p;d(p,l)=2,\\d(p,j)=4}}u_{tp}\right)u_{kl}u_{iq} = u_{ij}u_{kl}\left(\sum_{\substack{p;d(p,l)=2,\\d(p,j)=4}}u_{tp}\right)u_{iq}
 \end{align*}
 by Relations \eqref{QA2}, \eqref{QA3} and $u_{kl}u_{tp}=u_{tp}u_{kl}$. But now we are in the same situation as in Equation \eqref{eqd3}, thus by the same argument we get $d(p,q) \neq 4$. By Lemma \ref{dist}, we deduce $u_{ij}u_{kl}u_{iq}=0$. This implies $u_{ij}u_{kl} = u_{ij}u_{kl}u_{ij}$ and Lemma \ref{com} yields $u_{ij}u_{kl}=u_{kl}u_{ij}$ for $d(i,k)=d(j,l)=4$. 
 
 Since we now know $u_{ij}u_{kl}=u_{kl}u_{ij}$ for $d(i,k),d(j,l) \leq 4$ and it holds $c_2=1$, $b_1+1=b_0$ and $c_n \geq 2$ for $5\leq n \leq 8$, we can use Lemma \ref{ia} (a) four times to get $u_{ij}u_{kl}=u_{kl}u_{ij}$ for $d(i,k)=d(j,l) \leq 8$. Then Lemma \ref{dist} yields that the Foster graph has no quantum symmetry.

\subsection{The Biggs-Smith graph $(\{3,2,2,2,1,1,1;1,1,1,1,1,1,3\})$}
Since the Biggs-Smith graph has diameter seven, we have $d(i,k), d(j,l) \leq 7$ for $i,j,k,l \in V$. By Lemma \ref{g}, we get $u_{ij}u_{kl}=u_{kl}u_{ij}$ for $d(i,k)=d(j,l)\leq 2$.

Let $d(i,k)=d(j,l)=4$. We show $u_{ij}u_{kl}=u_{kl}u_{ij}$ for $d(i,k)=d(j,l)=4$.\\

\noindent\emph{Step 1: It holds $u_{ij}u_{kl}= u_{ij}u_{kl}(u_{ij} + u_{iq_1} + u_{iq_2} + u_{iq_3})$, where $d(j,q_1)=2$ and $d(j,q_2)=d(j,q_3)=4$.}\\
 Since the Biggs-Smith graph has girth nine, there is exactly one vertex $t$ with $d(t,j)=d(t,l)=2$ and exactly one vertex $s$ with $d(i,s)=d(s,k)=2$. It holds
\begin{align*}
u_{ij}u_{kl} =u_{ij}u_{kl}\left(\sum_{\substack{p;d(p,l)=4,\\d(p,t)=2}}u_{ip}\right)
\end{align*}
by Lemma \ref{abk}, since we know $u_{kl}u_{at}=u_{at}u_{kl}$ for $a$ with $d(a,l)=2$. 
There are exactly four vertices that are at distance four to $l$ and at distance two to $t$, where one of them is $j$ (There are six vertices at distance two to $t$, where one of them is $l$ and another one is a vertex at distance two to $l$. The rest is at distance four to $l$). We deduce
\begin{align*}
u_{ij}u_{kl}= u_{ij}u_{kl}(u_{ij} + u_{iq_1} + u_{iq_2} + u_{iq_3}),
\end{align*}
where $d(j,q_1)=2$ and $d(j,q_2)=d(j,q_3)=4$.\\

\noindent\emph{Step 2: We have $u_{ij}u_{kl}u_{iq_1}=0$.}\\
We know that the Biggs-Smith is $3$-regular and it holds $b_4=c_4=1$. Thus, if we have $d(a,b)=4$ for vertices $a,b$, there is exactly one neighbor of $a$ in distance four to $b$. Therefore we have exactly one neighbor $x$ of $k$ with $d(x,i)=4$ and exactly one neighbor $y$ of $l$ with $d(y,j)=4$. By Relations \eqref{QA2}, \eqref{QA3}, we deduce
\begin{align*}
u_{ij}u_{kl}u_{iq_1} = u_{ij}u_{xy}u_{kl}u_{iq_1}.
\end{align*}
Denote by $z_1$ the common neighbor of $j$ and $q_1$. We know that $j$ and $q_1$ are two neighbors of $z_1$ at distance four to $l$. Thus $d(z_1,l)=4$ contradicts $c_4=b_4=1$ and $d(z_1,l)=5$ contradicts $c_5=1$. But it holds $d(z_1,l) \in \{3,4,5\}$ since $z_1$ has neighbors in distance four to $l$. We deduce $d(z_1,l)=3$. We have $b_3=2$, $c_3=1$ and know $d(z_1,l)=3$, thus $l$ has two neighbors at distance four to $z_1$ and one neighbor at distance two to $z_1$. If $d(y,z_1)=2$, then we get $d(y,j) \leq 3$ since $(j,z_1) \in E$ contradicting $d(y,j)=4$. We conclude $d(y,z_1)=4$ as $y$ is a neighbor of $l$ not in distance two to $z_1$. Furthermore it holds $d(q_1,y)\neq4$, as otherwise $z_1$ would have the two neighbors $j$ and $q_1$ at distance four to $y$ contradicting $b_4=c_4=1$, since $d(z_1,y)=4$. But this yields
\begin{align*}
u_{ij}u_{kl}u_{iq_1} = u_{ij}u_{xy}u_{kl}u_{iq_1}=u_{ij}u_{kl}u_{xy}u_{iq_1}=0, 
\end{align*}
by using Relation \eqref{QA3} and $u_{xy}u_{kl}=u_{kl}u_{xy}$. \\

\noindent\emph{Step 3: We have $u_{ij}u_{kl}u_{iq_2}=0$.}\\
We know that $q_2$ and $q_3$ are in distance two to $t$. Thus, they have to be adjacent to one of the neighbors of $t$. They cannot be adjacent to $z_1$, because $z_1$ has neighbors $j,q_1$ and $t$ and the Biggs-Smith graph is $3$-regular. It holds $b_2=2$, $c_2=1$ and we know $d(t,l)=2$, which means that $t$ has one neighbor, say $z_2$, adjacent to $l$ and two neighbors ($z_1$ and one more) in distance three to $l$. Denote the third neighbor of $t$ by $z_3$. Recall $d(q_2,l)=d(q_3,l)=4$. Thus $q_2$, $q_3$ cannot be adjacent to $z_2$ as otherwise $d(q_2,l)=d(q_3,l) \leq 2$. We conclude that $q_2$, $q_3$ are both neighbors of $z_3$, where $d(z_3,l)=3$. The vertices $z_2$ and $y$ are neighbors of $l$. We have $d(l,t)=2$ and $(t,z_2)\in E$, where we know that the other neighbors of $l$ are in distance three to $t$ because we have $b_2=2$. We deduce $d(y,t)=3$. It also holds $d(z_2,y)=2$ since they have the common neighbor $l$ and the Biggs-Smith graph has girth nine. Thus we get $d(z_3,y)=4$ by $b_3=2$, since $d(t,y)=3$ and we know that $z_1$ is the neighbor of $t$ in distance two to $y$. Because of $d(z_3, y)=4$ and $c_4=b_4=1$, we see that only one of the vertices $q_2$, $q_3$ is in distance four to $y$, say this is $q_3$. We obtain 
\begin{align*}
u_{ij}u_{kl}u_{iq_2} = u_{ij}u_{xy}u_{kl}u_{iq_2} = u_{ij}u_{kl}u_{xy}u_{iq_2} =0,
\end{align*}
by Relations \eqref{QA2}, \eqref{QA3}, using $u_{xy}u_{kl}=u_{kl}u_{xy}$ and Lemma \ref{dist}. \\

\noindent\emph{Step 4: It holds $u_{ij}u_{kl}u_{iq_3}=0$.}\\
We have $d(q_3,y)=4$ and since $l$ is a neighbor of $y$ at distance four to $q_3$, we know that the two neighbors $c,d \neq l$ of $y$ are not in distance four to $q_3$ because $c_4=b_4=1$. Therefore
\begin{align*}
u_{ij}u_{xy}u_{kc}u_{iq_3} = 0 =u_{ij}u_{xy}u_{kd}u_{iq_3}
\end{align*}
by Lemma \ref{dist}. We deduce 
\begin{align*}
u_{ij}u_{kl}u_{iq_3} &= u_{ij}u_{xy}u_{kl}u_{iq_3} \\
&= u_{ij}u_{xy}(u_{kl} + u_{kc}+ u_{kd})u_{iq_3} \\
&= u_{ij}u_{xy}\left(\sum_{a;(y,a) \in E}u_{ka}\right)u_{iq_3} \\
&=u_{ij}u_{xy}\left(\sum_{a}u_{ka}\right)u_{iq_3} \\
&= u_{ij}u_{xy}u_{iq_3},
\end{align*}
by also using Relations \eqref{QA2}, \eqref{QA3}. Now, take $e,f$ to be the vertices with $d(e,i)=d(e,x)=2$, $d(f,j)=d(f,y)=2$ (those are unique, since $d(i,x)=d(j,y)=4$ and the Biggs-Smith graph has girth nine). It holds
\begin{align*}
u_{ij}u_{xy}u_{iq_3}= u_{ij}u_{ef}u_{xy}u_{iq_3} = u_{ij}u_{xy}u_{ef}u_{iq_3},
\end{align*}
by Relations \eqref{QA2}, \eqref{QA3} and since we know $u_{xy}u_{ef}=u_{ef}u_{xy}$ by Lemma \ref{g}. We have $d(f,q_3) \neq 2$ because otherwise there would be two vertices, $f$ and $t$, in distance two to $j$ and $q_3$, so we would get an cycle of length $\leq 8$ in the Biggs-Smith graph($f \neq t$, since $d(y,f)=2, d(y,t)=3$). Thus, by Lemma \ref{dist}, we get
\begin{align*}
u_{ij}u_{kl}u_{iq_3}=  u_{ij}u_{xy}u_{iq_3}= u_{ij}u_{xy}u_{ef}u_{iq_3} =0.
 \end{align*}
 
 \noindent\emph{Step 5: It holds $u_{ij}u_{kl}=u_{kl}u_{ij}$ for $d(i,k)=d(j,l)=4$.}\\
 From Steps \emph{1}--\emph{4}, we deduce $u_{ij}u_{kl}=u_{ij}u_{kl}u_{ij}$ for $d(i,k)=d(j,l)=4$ and we get $u_{ij}u_{kl}=u_{kl}u_{ij}$ by Lemma \ref{com}.\\
 
\noindent\emph{Step 6: We have $u_{ij}u_{kl}=u_{kl}u_{ij}$ for $d(i,k)=d(j,l)=3$.}\\
Let $d(i,k)=d(j,l)=3$. We obtain
 \begin{align*}
 u_{ij}u_{kl} = u_{ij} u_{kl} (u_{ij} + u_{iq}),
 \end{align*}
by Lemma \ref{d3}, where $d(q,j)=2, d(l,q)=3$. We have $b_3=2$ and therefore there are two neighbors $t_1,t_2$ of $j$ in distance four to $l$ and two neighbors $s_1, s_2$ of $i$ in distance four to $k$. At least one of them, say $t_1$, is not connected to $q$, since otherwise we would get the quadrangle $j,t_1,q,t_2,j$. By Lemma \ref{dist} we get $u_{s_at_1}u_{iq} =0$, $a=1,2$. Because we know $u_{s_at_1}u_{kl} = u_{kl} u_{s_at_1}$, since $d(s_a,k) = 4 = d(t_1,l)$, we deduce
\begin{align*}
 u_{ij}u_{kl}u_{iq} = u_{ij}(u_{s_1t_1} + u_{s_2t_1})u_{kl}u_{iq} = u_{ij}u_{kl}(u_{s_1t_1} + u_{s_2t_1})u_{iq} =0.
\end{align*}
This yields $u_{ij}u_{kl}=u_{ij}u_{kl}u_{ij}$ for $d(i,k)=d(j,l)=3$ and we obtain $u_{ij}u_{kl}=u_{kl}u_{ij}$ by Lemma \ref{com}.\\

\noindent\emph{Step 7: We have $u_{ij}u_{kl}=u_{kl}u_{ij}$ for $5 \leq d(i,k)=d(j,l) \leq 7$.}\\
We now have $u_{ij}u_{kl}=u_{kl}u_{ij}$ for $d(i,k)=d(j,l)\leq 4$ and since $b_4=1$, we get $u_{ij}u_{kl}=u_{kl}u_{ij}$ for $d(i,k)=d(j,l) =5$ by Lemma \ref{cubic} (i). We have $b_5=1$ and thus, using Lemma \ref{cubic} (i) again, we obtain  $u_{ij}u_{kl}=u_{kl}u_{ij}$ for $d(i,k)=d(j,l) =6$. Lemma \ref{ia} (a) now yields $u_{ij}u_{kl}=u_{kl}u_{ij}$ for $d(i,k)=d(j,l) =7$, because $c_2=1$, $b_1+1=b_0$, $c_7=3$. Using Lemma \ref{dist}, we conclude that the Biggs-Smith graph has no quantum symmetry.

\begin{rem}
There is only one cubic distance-regular graph that is not distance-transitive. This is the Tutte $12$-cage. We do not know whether or not this graph has quantum symmetry. 
\end{rem}

\section{Further distance-regular graphs with no quantum symmetry}\label{further}

In this chapter, we study further distance-regular graphs of order $\leq 20$. We assume $(u_{ij})_{1 \leq i,j \leq n}$ to be the generators of $C(\QBan(\Gamma))$ and show that the graph $\Gamma$ of the has no quantum symmetry in the corresponding subsection.

The following graph is the co-Heawood graph, which is the bipartite complement of the Heawood graph with respect to the complete bipartite graph $K_{7,7}$ and thus closely related to the Heawood graph. 

\subsection{The co-Heawood graph $(\{4,3,2;1,2,4\})$}
The co-Heawood graph has diameter three. Therefore we have $d(i,k),d(j,l) \leq 3$ for $i,j,k,l \in V$. Since the co-Heawood graph is the bipartite complement of the Heawood graph with respect to $K_{7,7}$, we see that vertices at distance three to a vertex $i$ are exactly those that are connected to $i$ in the Heawood graph. Vertices at distance two are the same ones in both graphs, since those are the six other vertices in the same maximal independent set as $i$. And finally the vertices that are connected to $i$ in the co-Heawood graph are those at distance three to $i$ in the Heawood graph. 
Therefore, we can use the same arguments as in Theorem \ref{g5} to obtain that $u_{ij} u_{kl} = u_{kl} u_{ij}$ for $d(i,k) = d(j,l)=3$ for the co-Heawood graph. Also arguments of the proof of Lemma \ref{d2} work similarly to show $u_{ij} u_{kl} = u_{kl} u_{ij}$ for $d(i,k) = d(j,l)=2$ by replacing neighbors with vertices at distance three. Then using the same approach as in Lemma \ref{ia} (a), also replacing neighbors with vertices at distance three, we get $u_{ij} u_{kl} = u_{kl} u_{ij}$ for $(i,k), (j,l)\in E$. We obtain that the co-Heawood graph has no quantum symmetry by Lemma \ref{dist}.

\subsection{The line graph of the Petersen graph $L(\mathrm{P})$ $(\{4,2,1;1,1,4\})$}
The line graph of the Petersen graph has diameter three and thus we have $d(i,k),d(j,l) \leq 3$ for $i,j,k,l \in V$. Since adjacent vertices have exactly one common neighbor, Lemma \ref{oneneighbor} yields $\QBan(L(\mathrm{P})) = \QBic(L(\mathrm{P}))$. Therefore Relation \eqref{QA5} holds. 

Now, let $d(i,k)=d(j,l) =2$. We want to prove $u_{ij}u_{kl}=u_{kl}u_{ij}$. We know that the Petersen graph is the Kneser graph $K(5,2)$. Thus, vertices in the line graph of the Petersen graph are of the form $\{\{a,b\},\{c,d\}\}$, where $\{a,b\}$, $\{c,d\}$ are disjoint $2$-subsets of $\{1,\dots, 5\}$. Two vertices are connected if and only if they have exactly one $2$-subset in common. The line graph of the Petersen graph is distance-transitive, therefore it suffices to show $u_{ij}u_{kl}= u_{kl} u_{ij}$ for $j = \{\{1,2\},\{3,4\}\}, l = \{\{1,3\}, \{4,5\}\}$ by Lemma \ref{disttrans}. The only common neighbor of $j$ and $l$ is $t= \{\{1,2\},\{4,5\}\}$. Since we know $\QBan(L(\mathrm{P})) = \QBic(L(\mathrm{P}))$, we get
\begin{align*}
u_{ij}u_{kl} = u_{ij}u_{kl} \sum_{\substack{p;d(p,l)=2,\\ (p,t) \in E}} u_{ip}
\end{align*}
by Lemma \ref{abk}.
 Besides $j$, the vertex $q = \{\{1,2\},\{3,5\}\}$ is the only other vertex in distance two to $l$ which is also adjacent to $t$. This yields
 \begin{align*}
 u_{ij}u_{kl} = u_{ij}u_{kl}(u_{ij} + u_{iq}).
 \end{align*}
 The vertex $b=\{\{1,3\}, \{2,4\}\}$ is adjacent to $l$ and in distance three to $j$. Using Relations \eqref{QA2} and \eqref{QA4}, we deduce
 \begin{align*}
 u_{ij}u_{kl}u_{iq} = u_{ij}\left(\sum_{\substack{a;d(a,i)=3,\\ (a,k) \in E}} u_{ab}\right)u_{kl}u_{iq}. 
 \end{align*}
 Because of Relation \eqref{QA5}, we get 
 \begin{align*}
 u_{ij}u_{kl}u_{iq} = u_{ij}u_{kl}\left(\sum_{\substack{a;d(a,i)=3,\\ (a,k) \in E}} u_{ab}\right)u_{iq}. 
 \end{align*}
 We see that $\{\{2,4\},\{3,5\}\}$ is a neighbor of $b$ and $q$. This yields $d(b,q) \leq2$ and since we have $d(a,i)=3$, we obtain
 \begin{align*}
 u_{ij}u_{kl}u_{iq} = u_{ij}u_{kl}\left(\sum_{\substack{a;d(a,i)=3,\\ (a,k) \in E}} u_{ab}\right)u_{iq}= 0,
 \end{align*}
 by Lemma \ref{dist}. Summarizing, it holds $u_{ij}u_{kl}=u_{ij}u_{kl}u_{ij}$. By Lemma \ref{com}, we see that $u_{ij}$ and $u_{kl}$ commute. 
 
For $d(i,k)=d(j,l) =3$, all conditions for Lemma \ref{ia} (b) are fulfilled and we get $u_{ij}u_{kl}=u_{kl}u_{ij}$ for $d(i,k)=d(j,l) =3$. Using Lemma \ref{dist}, we deduce that the line graph of the Petersen graph has no quantum symmetry.

\begin{lem}\label{twocommon}
Let $\Gamma$ be an undirected graph with clique number three, where adjacent vertices and vertices at distance two have exactly two common neighbors. Then we have $\QBan(\Gamma) = \QBic(\Gamma)$.  
\end{lem}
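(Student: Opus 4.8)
The goal is to verify Relation \eqref{QA5}, that is $u_{ij}u_{kl}=u_{kl}u_{ij}$ whenever $(i,k),(j,l)\in E$, since this is exactly the extra relation defining $\QBic(\Gamma)$ inside $\QBan(\Gamma)$. By Lemma \ref{com} it suffices to show $u_{ij}u_{kl}=u_{ij}u_{kl}u_{ij}$. As in the proofs of Theorem \ref{g5} and Lemma \ref{oneneighbor}, I would start from
\[
u_{ij}u_{kl}=u_{ij}u_{kl}\sum_{s;(l,s)\in E}u_{is},
\]
which holds by Relations \eqref{QA2} and \eqref{QA3}, and then try to prove that every summand with $s\neq j$ satisfies $u_{ij}u_{kl}u_{is}=0$. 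The one structural fact I would record at the outset is that, because the clique number is three, the two common neighbors of any edge are non-adjacent (otherwise they complete a $K_4$); I will exploit the resulting distance-two incidences throughout.

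The neighbors $s\neq j$ of $l$ split into two types: the two common neighbors $t_1,t_2$ of the edge $(j,l)$ (the vertices adjacent to $j$, giving the two triangles on $jl$), and the remaining neighbors of $l$, which are all at distance two from $j$. For each type the plan is to imitate the identity-insertion computation of Lemma \ref{oneneighbor}, inserting resolutions $\sum_a u_{ka}=1$ or $\sum_a u_{sa}=1$ and collapsing with the mixed relations \eqref{QA3} and \eqref{QA4}. The decisive difference from Lemma \ref{oneneighbor} is that the relevant pairs of vertices no longer possess a \emph{unique} common neighbor, so each collapse leaves behind one extra ``companion'' term attached to the second common neighbor, and the heart of the argument is to kill these companions.

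Concretely, for $s=t_m$ I would compare $u_{ij}u_{kl}u_{it_m}$ with $u_{ij}\bigl(\sum_a u_{ka}\bigr)u_{it_m}=u_{ij}u_{it_m}=0$. Since the only $a$ surviving the relations are the two common neighbors of $j$ and $t_m$, this gives $u_{ij}u_{kl}u_{it_m}=-u_{ij}u_{kw}u_{it_m}$, where $w$ is the second common neighbor of $j$ and $t_m$; clique number three forces $d(l,w)=2$, and I would use this non-adjacency, together with the fact that $\{j,t_m\}$ are the only common neighbors of $l$ and $w$, to annihilate the companion via a further insertion and Relation \eqref{QA3}. For the distance-two neighbors $s$ I would instead invoke Lemma \ref{krit} with $m=1$: the pair $(j,s)$ has exactly two common neighbors $l$ and $v$, so it is enough to produce an auxiliary vertex $q$ with $d(q,j)\neq d(q,s)$ and $d(q,l)\neq d(q,v)$, after which $l$ is the unique common neighbor of $j,s$ at distance $d(q,l)$ from $q$ and Lemma \ref{krit} yields $u_{ij}u_{kl}u_{is}=0$; the existence of such a separating $q$ should follow from the counting built into the $\lambda=\mu=2$ hypothesis. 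Once all summands with $s\neq j$ vanish we obtain $u_{ij}u_{kl}=u_{ij}u_{kl}u_{ij}$, and Lemma \ref{com} finishes the proof.

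The main obstacle is precisely this loss of uniqueness: in Lemma \ref{oneneighbor} every collapse isolated a single intermediate vertex, whereas here each step spawns a companion term governed by the second of the two common neighbors. The technical crux is therefore to use the clique-number-three hypothesis in an essential way — it pins each pair of ``twin'' common neighbors at distance two rather than letting them be adjacent (as happens in the $4\times4$ rook's graph, which shares the same intersection array yet \emph{does} have quantum symmetry) — thereby supplying the non-adjacencies that let Relations \eqref{QA3} and \eqref{QA4} eliminate the companion terms. Verifying that these eliminations go through uniformly, and in particular that a separating vertex $q$ is always available for the distance-two case, is where the real work lies.
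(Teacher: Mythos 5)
Your skeleton coincides with the paper's: expand $u_{ij}u_{kl}=u_{ij}u_{kl}\sum_{s;(l,s)\in E}u_{is}$, split the terms with $s\neq j$ into the two common neighbors $p_1,p_2$ of the edge $(j,l)$ and the neighbors of $l$ at distance two from $j$, and kill each term, using that clique number three forces $(p_1,p_2)\notin E$. But the decisive steps are exactly the ones you defer (``should follow from the counting'', ``where the real work lies''), so there is a genuine gap: you never exhibit the separating vertex $q$ that Lemma \ref{krit} needs, and in the common-neighbor case you trade a direct application of that lemma for a companion-term identity $u_{ij}u_{kl}u_{it_m}=-u_{ij}u_{kw}u_{it_m}$ whose final annihilation is left as ``a further insertion'' with no candidate insertion specified; that step is not easier than the one it replaces.

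The missing idea is that the separating vertex always lies in $\{p_1,p_2\}$ itself. Since $(p_1,p_2)\notin E$ and vertices at distance two have exactly two common neighbors, the common neighbors of $p_1$ and $p_2$ are precisely $j$ and $l$; hence $l$ is the \emph{unique} common neighbor of $j$, $p_1$ and $p_2$, and Lemma \ref{krit} with $p=p_a$, $q=p_{3-a}$ gives $u_{ij}u_{kl}u_{ip_a}=0$ outright. (This also settles your companion term: the second common neighbor $w$ of $j$ and $t_m=p_m$ cannot be adjacent to $p_{3-m}$, as it would then be a third common neighbor of $p_1,p_2$.) For a neighbor $p$ of $l$ at distance two from $j$, at least one $p_x$ satisfies $(p_x,p)\notin E$, since otherwise $p_1,p_2$ would have the three common neighbors $j,l,p$; taking $q=p_x$, the second common neighbor of $p_x$ and $p$ cannot be adjacent to $j$, since otherwise it and $l$ would have the three common neighbors $j,p_x,p$, so again $l$ is the unique common neighbor of $j,p_x,p$ and Lemma \ref{krit} yields $u_{ij}u_{kl}u_{ip}=0$. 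These two counting arguments are essentially the entire content of the paper's proof; without them your proposal is a correct plan rather than a proof.
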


\begin{proof}
Let $(i,k), (j,l) \in E$. By Relations \eqref{QA2}, \eqref{QA3} we have 
\begin{align*}
u_{ij}u_{kl} = u_{ij} u_{kl} \left(\sum_{p;(l,p) \in E} u_{ip}\right).
\end{align*} 
Denote the two common neighbors of $j$ and $l$ by $p_1, p_2$. 

We have $(p_1,p_2) \notin E$, since otherwise we get a clique of size four, but we know that the clique number of $\Gamma$ is three. Also $p_1,p_2$ have two common neighbors since $d(p_1,p_2)=2$, where we know that those are $l$ and $j$. This yields that $l$ is the only common neighbor of $p_1,p_2$ and $j$. We also have $(j,p_1) \in E$, $(p_1,p_2) \notin E$ by previous considerations and deduce
\begin{align*}
u_{ij} u_{kl} u_{ip_a} =0, \quad a=1,2
\end{align*}
by Lemma \ref{krit}, where we choose $q=p_1$ for $p_2$ and vice versa.

Now, let $p\notin \{j,p_1,p_2\}$ and $(l,p) \in E$ (this implies $(p,j) \notin E$). We know that we have $(p_1,p) \notin E$ or $(p_2,p) \notin E$ since otherwise $p_1$ and $p_2$ have three common neighbors: $j$, $l$ and $p$. Choose $p_x$, $x\in \{1,2\}$ such that $(p_x,p) \notin E$. Since $p_x,p$ have $l$ as common neighbor and we know $d(p_x,p)=2$, there is exactly one other common neighbor $q \neq l$ of $p_x,p$. It holds $(j,q) \notin E$, because otherwise $j,p_x$ and $p$ would be common neighbors of $l$ and $q$, but we know that they can only have two common neighbors since $d(l,q)\leq 2$. Therefore $l$ is the only common neighbor of $j,p_x$ and $p$. We also have $(j,p_x) \in E$, $(p_x,p) \notin E$ and we obtain
\begin{align*}
u_{ij}u_{kl}u_{ip}=0
\end{align*}
by Lemma \ref{krit}, where we choose $q=p_x$. 

Summarizing, we get 
\begin{align*}
u_{ij}u_{kl} = u_{ij} u_{kl} \left(\sum_{p;(l,p) \in E} u_{ip}\right)= u_{ij}u_{kl}u_{ij}
\end{align*} 
and by Lemma \ref{com} we get $u_{ij} u_{kl} = u_{kl} u_{ij}$ for $(i,k), (j,l) \in E$. 
\end{proof}

\subsection{The Icosahedron $(\{5,2,1;1,2,5\})$}
The Icosahedron has diameter three and therefore we have $d(i,k),d(j,l) \leq 3$ for $i,j,k,l \in V$. 
Since adjacent vertices and vertices at distance two have exactly two common neighbors and since we know that the clique number is three, we get 
$u_{ij} u_{kl} = u_{kl} u_{ij}$ for $(i,k), (j,l) \in E$ by Lemma \ref{twocommon} and $u_{ij} u_{kl} = u_{kl} u_{ij}$ for $d(i,k)=2 =d(j,l)$ by Lemma \ref{ia} (c). 

By Lemma \ref{dist} we know that $u_{ij}$ and $u_{kl}$ commute if $d(i,k)\neq d(j,l)$. Thus it remains to show $u_{ij} u_{kl} = u_{kl} u_{ij}$ for $d(i,k)=3 =d(j,l)$. Note that for every vertex $x$, there is exactly one other vertex at distance three to $x$. Let $d(i,k)=3 =d(j,l)$. By Lemma \ref{dist}, we get
\begin{align*}
u_{ij}u_{kl} = u_{ij} u_{kl} \sum_{p; d(l,p)=3} u_{ip}. 
\end{align*}
Since $j$ is the only vertex in distance three to $l$, we conclude
\begin{align*}
u_{ij}u_{kl}=u_{ij}u_{kl}u_{ij}.
\end{align*}
Then Lemma \ref{com} yields $u_{ij}u_{kl}=u_{kl}u_{ij}$ and we get that the Icosahedron has no quantum symmetry. 

\subsection{The Shrikhande graph $(\{6,3;1,2\})$}\label{nosym}
First note that the Shrikhande graph is strongly regular with parameters $(16,6,2,2)$. Thus it has diameter two and we know $d(i,k),d(j,l) \leq 2$ for $i,j,k,l \in V$. Since $\lambda=\mu=2$, we know that every two vertices have exactly two common neighbors and we also know that the clique number is three. By Lemma \ref{twocommon}, we obtain $u_{ij} u_{kl} = u_{kl} u_{ij}$ for $(i,k), (j,l) \in E$. Then all the conditions of Lemma \ref{ia} (c) are met, we get $u_{ij} u_{kl} = u_{kl} u_{ij}$ for $(i,k), (j,l) \notin E$. We conclude that the Shrikhande graph has no quantum symmetry.

\begin{rem}
The $4\times4$ rook's graph $H(2,4)$ is strongly regular with the same parameters as the Shrikhande graph, but we know that the $4\times4$ rook's graph has quantum symmetry by Proposition \ref{Hamm}. The proof above does not apply for the $4\times4$ rook's graph because this graph has cliques of size four. 
\end{rem}

\begin{rem}
Since the Shrikhande graph has no quantum symmetry, we get that the quantum orbital algebra and the classical orbital algebra are the same. Therefore the Shrikhande graph is a nice example of a graph whose quantum orbital algebra is different from the coherent algebra of the graph. See \cite{nonlocal} for more on quantum orbital algebras of graphs. 
\end{rem}

\bibliographystyle{plain}
\bibliography{disttransitive}
\end{document}